\newtheorem{theorem}{Theorem}[section]
\newtheorem{lemma}[theorem]{Lemma}
\newtheorem{prop}[theorem]{Proposition}
\numberwithin{equation}{section}
\begin{document}

\title{On self-similar Lie algebras and virtual endomorphisms}

\author{Vyacheslav Futorny, Dessislava H. Kochloukova, Said N. Sidki}

\address{University of S\~ao Paulo (USP), S\~ao Paulo, Brazil; State University of Campinas (UNICAMP), Campinas, Brazil; Universidade de Bras\'ilia (UnB), Bras\'ilia, Brazil} 
\email{}

\subjclass[2000]{Primary  Secondary }

\date{}

\keywords{}

\begin{abstract} We introduce the notion of virtual endomorphisms of Lie algebras and use it 
as an approach for constructing self-similarity of Lie algebras. This is done in particular 
for a class of metabelian Lie algebras having homological type $FP_n$, which are variants of lamp-lighter groups. We establish several criteria when the existence of  virtual endomorphism implies a self-similar Lie structure.
Furthermore, we prove that the classical Lie algebra $sl_n(k)$, where $char(k)$ does not divide $n$
affords non-trivial faithful self-similarity.
 
\end{abstract}

\maketitle

\section{Introduction} \label{intr} Let $X$ be a commutative $k$-algebra with $1$, ${\mathcal Der} X$ be the Lie algebra of derivations of $X$ and  $L$ be a Lie algebra over $k$. Consider the wreath product of Lie algebras
$$  L \wr {\it Der } X : = (X \otimes_k L)  \leftthreetimes {\mathcal Der} X;$$
the wreath Lie product is natural and its  definition is given explicitly in Section \ref{section-structure}. 
Such  Lie algebras $ L \wr {\mathcal Der } X $ are important objects in the representation theory of Lie algebras.
In the case when $X=\mathbb C[t, t^{-1}]$, the derivation algebra ${\mathcal Der }   X$ is the centerless Virasoro algebra. Extending ${\mathcal Der }   X$ by a one dimensional center leads to the construction known as the Affine-Virasoro algebra
studied by many authors (e.g. \cite{Kac}, \cite{Ku}). These algebras are related to the conformal field theory, 
as the even part of the $N = 3$ superconformal algebra is the Affine-Virasoro
algebra with $L=sl(2)$.  The representation theory of Affine-Virasoro algebras was developed in  \cite{GHL}, \cite{GLZ}, \cite{HX}, \cite{JY}, \cite{LQ}, \cite{MRY} among  others.  Generalizing $X$ to the algebra of polynomial functions on $n$-dimensional torus the Lie algebra $ L \wr {\mathcal Der } X $   becomes the \emph{full toroidal} algebra. It was first introduced by Moody, Rao and Yokonuma in \cite{MRY}. The representation theory of these algebras was studied in   \cite{BB}, \cite{B}, \cite{R1}, \cite{R2} among  others.

The $n$-point Lie algebras, studied by Bremner \cite{Br1}, \cite{Br2} correspond to the case
$X=\mathbb C [t, (t -a_1)^{-1},  \ldots, (t - a_N)^{- 1}]$  with $a_1, \ldots, a_N$ distinct complex numbers. There is a vast literature  about the structure, central extensions and representations of these algebras,  
cf.  \cite{BCF}, \cite{C} and \cite{CJ} and references therein.
These algebras are particular cases of Krichever-Novikov algebras $L\otimes X$ studied in  \cite{KN1}, \cite{KN2}   in  connection  with the string theory in Minkowski space, where $X$ is the algebra of meromorphic functions on a Riemann surface of any genus with a finite number of poles.

On the other hand, there is a growing interest in the representation theory of the derivation algebras ${\mathcal Der }   X$.  
If $X=\mathcal F(Y)$ is the algebra of polynomial functions on an algebraic affine variety $Y$ then the derivation algebras ${\mathcal Der }   X$ is a source of simple Lie algebras. 
In fact, ${\mathcal Der }   X$ is  simple  if and only if $Y$ is a smooth variety \cite{S}.  
The classical results of Mathieu \cite{M1} on the representations of the centerless Virasoro algebra (the first Witt algebra) were recently generalized to an arbitrary torus ($n$th Witt algebra) in \cite{BF1}. The structure of the induced modules over ${\mathcal Der }   X$ for the  $n$-dimensional torus were studied in \cite{BF2}.  For a general irreducible affine variety $Y$ 
representation theory of the derivation
algebra ${\mathcal Der }   \mathcal F(Y)$ was initiated in \cite{BFN} and  \cite{BN}. 

Because of the strong connections between the loop algebra $L\otimes X$ and the derivation algebra ${\mathcal Der }   X$ and between their representations it is quite natural to 
combine these two structures together  in  $ L \wr {\mathcal Der } X $.

In \cite{Bartholdi} Bartholdi defined self-similar Lie algebra $L$ over a field $k$ as a Lie $k$-algebra  endowed with a monomorphism
  \begin{equation} \label{def01} 
  \psi : L \to L \wr {\it Der } X.
  \end{equation}
  We say that 
  $L$ is  a {\bf $\psi$-self-similar} Lie algebra if there is a monomorphism as in (\ref{def01}), where $\psi$ is proper, in the sense that it does not map $L$ into some Lie subalgebra of $X \otimes_k L$  or into ${\mathcal Der} X$.
  If we do not want to emphasize  the map $\psi$, we refer simply to  $L$ as a self-similar Lie algebra.   
   In all examples considered in \cite{Bartholdi}, $X$ is a $k$-algebra of finite dimension but in the general definition this is not required. 
Throughout the paper, if not mentioned otherwise $\otimes$ denotes $\otimes_k$.

  The wreath product construction $ L \wr {\mathcal Der } X $ should be viewed as a Lie algebra version of the restricted  wreath product of groups
and $\psi$ as the Lie algebra version of a self-similar representation of a group in its action on a regular tree.  In analogy with the Grigorchuk 2-group and the Gupta-Sidki p-groups, which are self-similar, Petrogradsky, Shestakov and Zelmanov  constructed in \cite{Petr}, \cite{Petr-Shest}, \cite{Shest-Zel}  Lie algebras that are either self-similar or  embed in self-similar Lie algebras of finite characteristic as  ideals of finite codimension.  
It is worth mentioning that   there are several  constructions of wreath Lie algebras different from the one we adopt in this paper, see \cite{P-R-S}, \cite{Shm}, \cite{Sul}.
   
   Applying recursively $\psi$ we get
   $$\psi^m : L \to (X^{\otimes m}  \otimes L) \leftthreetimes {\mathcal Der} (X^{\otimes m})$$ and this induces a Lie algebra homomorphism $L \to {\mathcal Der} (X^{\otimes m})$ that will be explained in Section \ref{section-structure}. This gives a Lie algebra homomorphism $\nu : L \to End_k(T(X))$, where $T(X)$ is the tensor algebra $\oplus_{m \geq 0} X^{\otimes m}$. Suppose in addition to (\ref{def01}) the map $\nu$ is injective, then following Bartholdi
   we say
that   \begin{equation} \label{main-def} L \hbox{ is  a }  \hbox{ {\bf faithful} }\psi \hbox{-self-similar Lie algebra}.\end{equation}
   Again  to abbreviate notation we say that $L$ is a faithful self-similar Lie algebra. 
     It is easy to show examples of Lie algebras that are $\psi$-self similar but are not faithful $\psi$-self-similar. We are primary interested in faithful self-similar Lie algebras.

Drawing analogy from the theory of self-similar groups  we note that the vector space $ X^{\otimes m}$ plays the role of the $m$-th level of the homogeneous one rooted tree in the case of self-similar groups, so the condition that $\nu : L \to End_k(T(X))$ is injective  means that 
given a non-zero element of L, it has a non-trivial action on some level.
 When $X = k[x]/(x^p)$, where $char(k) = p > 0$, we say that a $\psi$-self-similar Lie algebra is transitive if for every $m \geq 1$, $ X^{\otimes m}$ is a cyclic left $U(L)$-module with generator $x^{p-1} \otimes  \ldots \otimes  x^{p-1}$, where $U(L)$ is the universal enveloping algebra of $L$. 

If we want to draw attention to the ring $X$ and the Lie algebra $L_0 = Im (\pi \psi)$, where $$\pi :  L \wr {\it Der } X \to  {\it Der } X \hbox{ is the canonical projection}$$ and $L$ satisfies  (\ref{def01}), we say that $L$  is a {\bf $(X, L_0)$-self-similar} Lie algebra.
         We say $L$ is a rank $m$ self-similar Lie algebra, 
 if $L_0$ is a Lie subalgebra of ${\mathcal Der} X$ of finite dimension $m$. The self-similar Lie algebras considered in \cite{Bartholdi} are all  of rank 1; are $(X,L_0)$-self-similar Lie algebras with $X = k[x]/ (x^p)$ and $L_0 = k \partial / \partial x$. These Lie algebras include the  
   examples of Petrogradsky, Shestakov and  Zelmanov  in \cite{Petr}, \cite{Petr-Shest}, \cite{Shest-Zel}.
 
A number of notions used in groups acting on trees have been carried over to the Lie algebra setting. Thus, following    
 Bartholdi we say that an element  $a$  of a $\psi$-self-similar Lie algebra  $L$ is {\bf finite state} if there is a finite dimensional  vector subspace $S$ of $L$ containing $a$ such that 
\begin{equation} \label{state}
\psi(S) \in (X \otimes S) \leftthreetimes {\mathcal Der} X.
  \end{equation}

    There is an extensive theory of self-similar groups. A link between virtual endomorphisms and self-similar groups was established by Nekrashevych, Sidki in  \cite{Nekra}, \cite{N-S}  and was later used by Berlatto, Dantas, Kochloukova, Sidki to  construct new classes of self-similar groups \cite{Ber-Sidki}, 
 \cite{Alex-Said}, \cite{Alex-Said2}, \cite{DesiSaid}. As in the theory of self-similar groups we develop  a theory of virtual endomorphisms of Lie algebras with applications to self-similar Lie algebras.
   In Section \ref{def-endo} we associate to every Lie algebra homomorphism 
   $ \psi : L \to L \wr {\it Der } X$
       a virtual endomorphism $\theta : H \to L$, where $H$ is an ideal of $L$. More precisely, given an augmentation map $\epsilon : X \to k$ (i.e. an epimorphism of $k$-algebras)  we define \begin{equation} \label{endo} \theta = (\epsilon \otimes id_L) \psi \hbox{ and }H =  Ker (\pi \psi).\end{equation} The map $\theta$ was only mentioned (but not studied) in \cite{Bartholdi}, as part of  the definition of a recurrent self-similar Lie algebra as a self-similar algebra for which $\theta$ is surjective. We call $\theta$ a virtual endomorphism since in the case when $X$ is finite dimensional, the ideal $H$ has finite codimension in $L$.  Although in the case of  groups the existence of virtual endomorphism implies the relevant group acts on a regular tree (but possibly with non-trivial kernel), the situation in the Lie algebra case  is not the same i.e given a virtual endomorphism $\theta$ it is not true in general that there is a monomorphism $\psi$ that satisfies (\ref{endo}). The main results of this paper areconcern with conditions on virtual endomorphisms of Lie algebras that imply the existence of a compatible self-similar  structure of the Lie algebra.   
   
Our first result shows that when $L$ is a $(X,L_0)$ self-similar Lie algebra and $L_0$ contains some special abelian Lie subalgebra then we can recover $\psi$; that is we can recover the self-similar structure from the virtual endomorphism $\theta$. 

{\bf Important notation} For a Lie algebra $L$ and $h,b \in L$ we denote $b^m \circ h$ for $ad(b)^m(h) = [b, ad(b)^{m-1} (h)]$ and $b^0 \circ h = h$.
  
\bigskip
{\bf Theorem A} {\it 
  Let $X = k[x_1, \ldots, x_n]/I$, where $I = 0$ if $char(k) = 0$ and $I = (x_1^p, \ldots, x^p)$ if $char(k) = p$.
Let $L$ be a Lie algebra over $k$  and
$$
  \psi : L \to L \wr {\it Der } X : = (X \otimes_{k} L)  \leftthreetimes {\mathcal Der} X
  $$ be     a Lie algebra homomorphism. 
  Let $\theta : H \to L$ be the endomorphism associated to $\psi$ with respect to the augmentation map $\epsilon : X \to k$ as in (\ref{endo}), where $\epsilon$ is defined by $\epsilon(x_i) = 0$ for every $1 \leq i \leq n$.

  Assume that 
  
  1. $L = H  \leftthreetimes L_0$; 
  
  2. $L_0$ contains an abelian Lie algebra with $k$-linear basis $y_1, \ldots , y_n$ such that $\psi(y_i) = \partial / \partial x_i$; 
  
  3. there is a positive integer $m$ such that $\theta(y_i^m \circ H) = 0$ for every $ 1 \leq i \leq n$ and if $char(k) = p > 0$ then $m = p$.

  Then 
  
  a) for every $h \in H$ 
\begin{equation} \label{main-eq}
\psi(h) =
 \sum_{0 \leq i_1, \ldots, i_n \leq m_0 = m-1} ( i_1 ! \ldots i_n !)^{-1} x_1^{i_1} \ldots x_n^{i_n}  \otimes 
   \theta(y_1^{i_1} \ldots y_n^{i_n} \circ h);
\end{equation}

b) $\psi$ is a monomorphism if and only if  $Ker(\theta)$ does not contain non-trivial ideals of $L$;  

c) $L$ is a faithful $\psi$-self-similar Lie algebra if and only if there is not a non-trivial $\theta$-invariant ideal $J$ of  $L$ such that $J \subseteq H$ i.e. $\theta(J) \subseteq J$;

d) an element $h \in H$ is finite state if there is a finite dimensional subspace $V$ of $H$ such that $h \in V$ and  $\theta(y_1^{i_1} \ldots y_n^{i_n} \circ V) \subseteq V$ for every $ 0 \leq i_1, \ldots, i_n \leq m-1$;

e) if $n = 1$, $char(k) = p > 0$ and $\theta(H) = L$ then $L$ is a transitive $\psi$-self-similar Lie algebra.}

\bigskip Using Theorem A we will create criteria for the situation when the existence of a virtual  endomorphism $\theta : H \to L$ implies the existence of self-similar map $\psi$. Not surprisingly, the easiest case is when $L_0$ is abelian, which we treat it  in  
Theorem B. 

   \medskip
   {\bf Theorem B} {\it Let $L$ be a Lie algebra over a field $k$, $H$ an ideal of $L$ such that  $L = H  \leftthreetimes L_0$, $L_0$ is abelian and 
$dim(L_0) = n < \infty$.   Let $$\theta : H \to L$$ be a Lie algebra homomorphism such that there is $m  \in {\mathbb N}$ such that for every $b \in L_0$ and $h \in H$ we have $\theta(b^m \circ h) = 0$. If $char(k) = p >0$ we assume that $m = p$. 
     
     Then
      there is a homomorphism of Lie algebras
      $$\psi :  L \to L \wr {\mathcal Der } X = (X \otimes L) \leftthreetimes {\mathcal Der}X
     $$
     given by (\ref{main-eq})    
 where     
      
      1. $X = k[x_1, \ldots, x_n]/I$, where  $I = (x_1^p, \ldots, x_n^p)$ if $char(k) = p >0$ and $I = 0$ if $char(k) = 0$;
      
      2. $L_0 $ has a basis $y_1, \ldots, y_n$ over $k$ such that $\psi(y_i) = \partial / \partial {x_i}$ for every $ 1 \leq i \leq n$;  
     
     3. $\theta$ is the virtual endomorphism associated to $\psi$ with respect to the $k$-algebra map $\epsilon : X \to k$ defined by $\epsilon(x_i) = 0$ for every $1 \leq i \leq n$.

}
   
   \medskip

   In the rest of the  paper we 
prove that several Lie algebras  $L = H   \leftthreetimes L_0$, with $L_0$ a Lie subalgebra of ${\mathcal Der}X$, to be self-similar for specific
virtual endomorphisms. We note that in all these Lie algebras $L_0$ contains the finite dimensional, abelian Lie subalgebra $D_0 $ of ${\mathcal Der}X$ that is spanned by the differentials $\{ \partial / \partial x_j \}_{1 \leq j \leq n}$. 
The algebras $L_0$ which appear in our constructions are
 : a copy of $sl_{n+1}(k)$, the Frank algebra defined in \cite{Frank}, the full Jacobson-Witt algebra $W(n, \underline{1})$ \cite{Stradebook}, the Heisenberg Lie algebra $\mathcal H$ of dimension 3 and its central extension ${\mathcal H} \oplus k$. 
   In the case when $L_0$ is a non-abelian  Lie algebra  of  
   ${\mathcal Der}X$  in the  list stated above, there are versions of Theorem B that are more technical and are given by Theorem \ref{sl-2}, Theorem \ref{Frank0}, Theorem \ref{sl}, Theorem \ref{heisenberg} and Theorem \ref{heisenberg2} in  Section \ref{sl2-Witt}, Section \ref{Frank-section} and Section \ref{section-heisenberg}. 
   
   In    Section \ref{embed-new} we show a surprising self-similar structure on $sl_{n+1}(k)$, 
when $char(k)$ does not divide $n+1$.
		We know that  $sl_{n+1}( k)$ embeds in ${\mathcal Der} X$, where $X = k[x_1, \ldots, x_n] / (x_1^p, \ldots, x_n^p)$ if $p > 0$  and $X = k[x_1, \ldots, x_n]$ if $p = 0$.  
In ``matrix'' notation  i.e. $E_{a,b} E_{c,d} = \delta_{b,c} E_{a,d}$ we have :  
$$
E_{i,j} = - x_j \partial / \partial x_i \hbox{ for } 1 \leq i,j \leq n;$$ $$
E_{n+1,i} = - x_i \sum_{1 \leq j \leq n} x_j \partial / \partial x_j \hbox{ for } 1 \leq i \leq n;$$ $$
E_{i,n+1} = \partial / \partial x_i \hbox{ for } 1 \leq i \leq n \hbox{ and }
E_{n+1, n+1} =  \sum_{1 \leq i \leq n} x_i \partial / \partial x_i.$$
 The $k$-span of 
$\{ E_{i,j} \ |  \ 1 \leq i \not= j \leq n+1 \} \cup \{ E_{i,i} - E_{i+1, i+1} \ | \ 1 \leq i \leq n \}$ is a copy of $sl_{n+1}(k)$ and coincides with the $k$-span of 
$\{ E_{i,j} \ |  \ 1 \leq i \not= j \leq n+1 \} \cup \{ E_{i,i} \ | \ 1 \leq i \leq n \}$ .

   \bigskip
 {\bf Theorem C} {\it Let $k$ be a field such that  $char(k)$ does not divide $n+1$.  Then
there is a Lie algebra monomorphism
$$\psi : sl_{n+1}(k) \to (X \otimes sl_{n+1}( k)) \leftthreetimes {\mathcal Der}(X)$$
		  where $X = k[x_1, \ldots, x_n]$ if $char(k) = 0$ and $X = k[x_1, \ldots, x_n]/(x_1^p, \ldots, x_n^p)$ if $char(k) = p > 0$,  given by 
		  $$\psi(E_{i, n+1}) = \partial / \partial x_i  \hbox{ for } 1 \leq i \leq n,$$
		  $$   \psi(E_{i,j}) = 1 \otimes E_{i,j} + E_{i,j}  \hbox{ for } 1 \leq i,j \leq n,$$
		  and
		  $$\psi(E_{n+1,i}) = \sum_{1 \leq j \leq n} x_j \otimes b_{i,j} + 1 \otimes E_{n+1,i} + E_{n+1,i}  \hbox{ for } 1 \leq i \leq n,$$ where $$b_{i,j} = E_{j,i} - \delta_{i,j} E_{n+1, n+1} = E_{j,i} + \delta_{i,j}  \sum_{1 \leq i \leq n} E_{i,i}.$$
		 Furthermore $sl_{n+1}(k)$ is a faithful $\psi$-self-similar Lie algebra. }
   
   \bigskip
In Section \ref{abelian} we give examples of abelian and nilpotent faithful self-similar Lie algebras, in particular we show that  every countable dimensional abelian Lie algebra and the Heisenberg Lie algebra are  faithful self-similar. Furthermore we consider a nilpotent Lie algebra inside $gl_3(\mathbb{F}_p[x])$ and show it is  faithful self-similar.  

In Section \ref{fpn} we consider an example of a metabelian faithful self-similar Lie algebra.
In \cite{Br-Gr1}, \cite{Br-Gr2} Bryant and Groves established a criterion when a finitely generated, metabelian Lie algebra $L$ is finitely presented. 
 In \cite{Desi} Kochloukova gave   a criterion when a metabelian Lie algebra $L$ is of homological type $FP_m$ provided the extension is split i.e. $L = A \leftthreetimes Q$ with $A$ and $Q$ abelian. Recall that a Lie algebra $L$ over a field $k$ is of homological type $FP_m$ if there is a projective resolution of the trivial $U(L)$-module $k$, where all projectives are finitely generated in dimension $ \leq m$   and $U(L)$ is the enveloping algebra of $L$. 
Note that little is known for soluble ( but not metabelian) finitely presented Lie algebras. As shown by Wasserman in \cite{Wasserman} for a soluble finitely presented Lie algebra $L$ every ideal $I$ of codimension 1 is finitely generated as a Lie algebra. If  a soluble Lie algebra $L$ is of homological type $FP_{\infty}$ ( i.e. $L$ is $FP_m$ for every $m \geq 1$) Groves and Kochloukova showed in \cite{D-J} that $L$ is  finite dimensional.

Using the Bryant-Groves theory for metabelian Lie algebras and the main result from \cite{Desi}  we construct in Theorem D a self-similar metabelian Lie algebra of type $FP_n$. Note that by the Bryant-Groves theory of metabelian Lie algebras every metabelian Lie algebra of type $FP_2$  is  finitely presented. When  $n = 1$  and $char(k) = p > 0$, the Lie algebra $L$ in Theorem D is the lamplighter Lie algebra which we define in Section \ref{lamp}.

\medskip
{\bf Theorem D} {\it Let $Q$ be an abelian Lie algebra over a field $k$, $\dim Q = n$ and $Q$ has a basis $\{ q_1, \ldots, q_n \}$ over $k$. Consider the Lie algebra
$$
L = A \leftthreetimes Q
$$ where $A = k[x]$ and $[q_i, a] = x^i a$ for $a \in A$. Then 

a) $L$ is a faithful contracting recurrent self-similar metabelian Lie algebra with a generating set, where each element is finite state;

b) if $char(k) = p > 0$ then $L$ is transitive self-similar Lie algebra; 

c) $L$ is not regularly weakly branched;

d) if $char(k) = p > n$ or $char(k) = 0$ then $L$ is of homological type $FP_n$.}

\medskip The proof of Theorem D is based on the construction of virtual endomorphism $\theta : H \to L$, where $H = A_0 \leftthreetimes Q$ and $A_0 = x k[x]$, $\theta$ is the identity on $Q$ and for $a \in A_0$ we have $\theta(a) = a/ x$.
 The definitions of transitive, of  regularly weakly branched and of contracting recurrent self-similar Lie algebra can be found in Section \ref{def1234}. We note that when $n = 1$ and $char(k) =
p > 0$, the Lie algebra $L$ in the above theorem is the Lie algebra variant of the classical lamplighter group.

Moreover, we note that in Theorem D the $U(Q)$-module $A$  has Krull dimension 1. The same condition on the Krull dimension appears in a recent result for metabelian self-similar groups in \cite{DesiSaid} : every finitely generated group $G = B \rtimes Q_0$ with $B$ and $Q_0$ abelian, $Q_0$ torsion-free and $B$ of Krull dimension 1 as $\mathbb{Z} Q_0$-module is self-similar.

  \section{Preliminaries} \label{prel}
  
  \subsection{The structure of self-similar Lie algebra}   \label{section-structure}
  By definition
  $$
   L \wr {\mathcal Der } X = (X \otimes_{k} L) \leftthreetimes {\mathcal  Der} X
   $$
   is a Lie algebra with the following  operations :
   $$
   [x_1 \otimes a_1, x_2 \otimes a_2] = (x_1 x_2) \otimes [a_1, a_2] \hbox{ for } x_1, x_2 \in X, a_1, a_2 \in L
   $$
   and
   $$
   [\delta, x \otimes a] = \delta(x) \otimes a \hbox{ for } x \in  X, a \in L, \delta \in {\mathcal Der} (X)
   $$ 
  In \cite{Bartholdi} Bartholdi defined  inductively   an action of $L$ on $ X^{\otimes m}$ for $m \geq 1$. Suppose $$ \psi (a) = \sum_i z_i \otimes a_i + \delta_a, \delta_a \in {\mathcal Der}X.$$  Then the action of $a \in L$ on $x \in X$ is the following one :
  $$
   a \cdot x = \delta_a(x)
   $$
   and
   $$ 
   a \cdot (x_1 \otimes \ldots \otimes x_m) = \sum_i (z_i x_1) \otimes {a_i}  \cdot (x_2 \otimes \ldots \otimes x_m) + \delta_a(x_1) \otimes x_2 \otimes \ldots \otimes x_m,
   $$
   where $z_i x_1$ is the product of $z_i$ and $x_1$ in $X$.
   Thus there is a Lie algebra homomorphism $L \to End_k (X^{\otimes m})$ that is the homomorphism $\nu$ from the Section \ref{intr}.
   
   If we consider $T(X)$ as a ring with product that equals $\otimes$ then the action of $L$ on $T(X)$ constructed above is not by derivations. But if we consider $X^{\otimes m}$ as a ring via $(x_1 \otimes \ldots \otimes x_m)(y_1 \otimes \ldots \otimes y_m) = (x_1 y_1) \otimes \ldots \otimes (x_m y_m)$ then the action of $L$ on $X^{\otimes m}$ descibed above is by derivations. This can be easily seen by induction on $m$. Observe that $L$ is a $\psi$-self-similar   faithful Lie algebra if
   $$ \cap_{m \geq 1} Ker (L \to  End_k (X^{\otimes m})) = 0.$$
 
 \subsection{Some definitions}  \label{def1234}
 
 We recall from \cite{Bartholdi} that $L$ is a recurrent $\psi$-self-similar Lie algebra if  the corresponding virtual endomorphism $\theta : H \to L$ is surjective.

Furthermore to define contracting $\psi$-self-similar Lie algebra we need the notion of states. Let $L$ be a $\psi$-self-similar Lie algebra. Following the definition of finite state element of $L$ from \cite{Bartholdi}  we define for a subset $Y$ of $L$  the spaces of states $S(Y)$ of $Y$ as $S(Y) = V$, where $V$ is the smallest (under inclusion) subspace of $L$ such that $\psi(Y) \subseteq (X \otimes V)  \leftthreetimes {\mathcal Der}X$. 
Then $S^i(Y) = S(S^{i-1}(Y))$ for $i \geq 2$. Then following \cite{Bartholdi} we say that $L$ is a contracting $\psi$-self-similar Lie algebra if there is a finite dimensional subspace $S$ of $L$  such that for every $a \in L$ there is an integer $m_0 \geq 1$ depending on $a$  such that for every $m \geq m_0$ we have that $S^m(a) \subseteq S$.

By \cite{Bartholdi} a recurrent transitive self-similar Lie algebra $L$ is regularly weakly branched if there is a non-zero ideal $K$ of $L$ such that $\psi(K)$ contains $X \otimes K$. If furthermore $K$ has finite codimension in $L$ we say that $L$ is regularly branched.
   \subsection{Constructing a virtual endomorphism for self-similar Lie algebras} \label{def-endo}
   
   The description of self-similar groups can be done by virtual endomorphism. In the case of actions on the rooted binary tree ${\mathcal T}_2$ this was first considered by   Nekrashevich and Sidki in \cite{Nekra}, \cite{N-S}. For every  group $G$   with state close, transitive representation in the automorphism group of a one rooted homogeneous tree ${\mathcal T_m}$, where every vertex has $m$ descendents,  there is a subgroup of finite index $\widetilde{G}$ together with a virtual endomorphism \begin{equation} \label{vir} f : \widetilde{G} \to G \end{equation}  constructed from the action of $G$ on the tree ${\mathcal T}$ i.e. $f$ is a homomorphism of groups. The converse holds too, for every virtual endomorphism (\ref{vir}) it is possible to construct a state closed action of $G$ on a one rooted homogeneous  tree ${\mathcal T}_m$, where $m = [G : \widetilde{G}]$, such that the action is transitive on the first level of the tree and this action is faithful (i.e. with a trivial kernel) precisely when $f$ is simple (i.e. there isn't a non-trivial, normal subgroup $K$ of $G$  contained in $H$ such that $f(K) \subseteq K$). Here we suggest a Lie algebra analogy of virtual endomorphism and use this to construct  self-similar Lie algebras.
   
    Suppose that $L$ is a self-similar Lie algebra i.e. satisfies condition (\ref{def01}). 
   Let $$\pi :   L \wr {\mathcal Der } X \to   {\mathcal Der } X$$ be the canonical projection. Define
   $$
   H = Ker (\pi \psi).
   $$
   Thus $H$ is an ideal of $L$.
    We assume from now on that that the commutative ring $X$ is endowed with  a surjective  homomorphism of $k$-algebras
    $$
    \epsilon : X \to k.
    $$
     Then we define a homomorphism of Lie algebras
     $$
     \theta : H \to L
     \ \ \hbox{ by }
 \ \      \theta = (\epsilon \otimes id_L) \psi
     $$
     i.e. $\theta(h) = \sum_i \epsilon (x_i) a_i$ when $\psi (h) = \sum_i x_i \otimes a_i$. In analogy with the group case we call $\theta$ the virtual endomorphism  associated to $\psi$ though here there is no analogy of the notion of subgroup of finite index. If $X$ is finite dimensional over $k$, then ${\mathcal Der} X$ is finite dimensional (over $k$), so $H$ has finite codimension in $L$. This justifies the word virtual in the definition of virtual endomorphism $\theta$.

   \subsection{The universal $(X,L_0)$-faithful self-similar Lie algebra} \label{universal}
In this section we assume that $X$ is {\bf finite dimensional} over $k$.   
   Define for a Lie subalgebra $L_0$ of ${\mathcal Der}X$ the vector space
   $${\mathcal W}(X, L_0) = \prod_{n = 0}^{\infty} (X^{\otimes n} \otimes L_0).
   $$
   The case when $L_0 = {\mathcal Der}X$ was considered in \cite{Bartholdi} and denote by ${\mathcal W}(X)$. In \cite{Bartholdi}  ${\mathcal W}(X)$ is endowed with Lie algebra structure and if $char(k) = p > 0$ then  ${\mathcal W}(X)$ is a restricted Lie algebra.  
   If $a = x_1 \otimes \ldots \otimes x_m \otimes \delta \in X^{\otimes m} \otimes {\mathcal Der}X$ and $b = y_1 \otimes \ldots \otimes y_n \otimes \epsilon \in X^{\otimes n} \otimes {\mathcal Der}X$ the bracket $[a,b]$ was defined in \cite{Bartholdi} by
   $$[a,b] = x_1 y_1 \otimes \ldots \otimes x_m y_n \otimes [\delta, \epsilon] \hbox{ if } n = m,$$
   $$ 
   [a,b] =  x_1 y_1 \otimes \ldots \otimes x_m y_m \otimes \delta(y_{m+1}) \otimes \ldots \otimes y_n \otimes \epsilon
   \hbox{ if } m < n$$
   and
   $$[a,b] = - [b,a] =  - x_1 y_1 \otimes \ldots \otimes  x_m y_m \otimes \epsilon(x_{n+1}) \otimes \ldots \otimes x_m \otimes \delta \hbox{ if } n < m.
   $$
   Note that ${\mathcal W}(X, L_0)$ is a Lie subalgebra of ${\mathcal W}(X)$. The self-similar structure of ${\mathcal W}(X, L_0)$ as defined in \cite{Bartholdi} is
   $$
   \psi_0 : {\mathcal W}(X, L_0) \to  (X \otimes  {\mathcal W}(X, L_0)) \leftthreetimes L_0
   $$
   where
   $$
   \psi_0 (a_0, a_1, \ldots, a_n, \ldots ) = (a_1, a_2, \ldots, a_n,\ldots ) + a_0
   $$
and we have identified $ \prod_{n = 1}^{\infty} (X^{\otimes n} \otimes L_0)$ with  $ X \otimes \prod_{n = 0}^{\infty} (X^{\otimes n} \otimes L_0)$. The last requires that $X \otimes - $ is a functor that commutes with direct product and this is the case when $X$ is finite dimensional over $k$.

In \cite{Bartholdi} is observed (without a proof) that every faithful self-similar Lie algebra for a fixed $X$  (called alphabet  in \cite{Bartholdi}) embeds in ${\mathcal W} (X)$. A similar fact holds for faithful $(X,L_0)$-self-similar Lie algebras and the universal $(X,L_0)$-self-similar Lie algebra ${\mathcal W}(X, L_0)$.
 
 \begin{lemma} Let $X$ be a { finite dimensional} commutative $k$-algebra.  Then if $L$ is a faithful $\psi$-self-similar Lie algebra with $$
   \psi : L \to  (X \otimes L)  \leftthreetimes L_0
   $$
   for some Lie subalgebra $L_0$ of ${\mathcal Der} X$
 then $L$ embeds in ${\mathcal W}(X, L_0)$.
 \end{lemma}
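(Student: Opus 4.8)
The plan is to construct the embedding explicitly by iterating the self-similar map $\psi$, in direct analogy with the construction of $\nu$ in Section \ref{section-structure}, and then to identify the resulting map with a morphism into $\mathcal{W}(X,L_0)$. First I would define, for each $m\geq 0$, a $k$-linear map $\phi_m : L \to X^{\otimes m}\otimes L_0$ as follows: apply $\psi$ to get $\psi(a)=\sum_i z_i\otimes a_i+\delta_a\in (X\otimes L)\leftthreetimes L_0$, so the ``$L_0$-component at level $0$'' is $\phi_0(a):=\delta_a$. Applying $\psi$ recursively to the $a_i$'s (exactly as in the definition of $\psi^m$ in the introduction) yields $\psi^m : L\to (X^{\otimes m}\otimes L)\leftthreetimes {\mathcal Der}(X^{\otimes m})$, and since each iterate keeps the ``$L_0$ part'' inside $X^{\otims}\otimes L_0$ (because $\psi$ lands in $(X\otimes L)\leftthreetimes L_0$, not the full $(X\otimes L)\leftthreetimes{\mathcal Der}X$), one extracts a well-defined component $\phi_m(a)\in X^{\otimes m}\otimes L_0$. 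Assemble these into
\[
\Phi : L \to \mathcal{W}(X,L_0)=\prod_{m\geq 0}\bigl(X^{\otimes m}\otimes L_0\bigr),\qquad \Phi(a)=(\phi_0(a),\phi_1(a),\dots).
\]

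Next I would verify that $\Phi$ is a Lie algebra homomorphism. The natural way is to check that $\Phi$ intertwines $\psi$ on $L$ with $\psi_0$ on $\mathcal{W}(X,L_0)$: concretely, that $\psi_0\circ\Phi = (\mathrm{id}_X\otimes \Phi)\times \Phi \circ \psi$ under the identification $\prod_{m\geq 1}(X^{\otimes m}\otimes L_0)\cong X\otimes\prod_{m\geq 0}(X^{\otimes m}\otimes L_0)$ (valid since $X$ is finite dimensional, as noted in Section \ref{universal}). This identity is essentially the definition of $\psi_0$ as a shift operator, combined with the recursive definition of the $\phi_m$; once it holds, and since $\psi$ and $\psi_0$ are Lie homomorphisms on their respective algebras, a straightforward induction on the ``level'' shows $\Phi$ respects brackets. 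Alternatively, one can compute $\phi_m([a,b])$ directly using the bracket formulas for ${\mathcal W}(X)$ recalled above and the multiplicativity of $\psi$; the shift-intertwining route is cleaner and I would take it.

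Finally I would show $\Phi$ is injective, and this is where the faithfulness hypothesis enters. By Section \ref{section-structure}, $L$ is faithful $\psi$-self-similar precisely when $\bigcap_{m\geq 1}\mathrm{Ker}(L\to \mathrm{End}_k(X^{\otimes m}))=0$, where the action of $a$ on $X^{\otimes m}$ is built from the iterated $\psi$. If $a\in\mathrm{Ker}(\Phi)$, then $\phi_m(a)=0$ for all $m$, meaning that at every stage of the iteration the derivation component vanishes; tracing through the inductive formula $a\cdot(x_1\otimes\cdots\otimes x_m)=\sum_i(z_ix_1)\otimes a_i\cdot(x_2\otimes\cdots\otimes x_m)+\delta_a(x_1)\otimes x_2\otimes\cdots\otimes x_m$ shows that vanishing of all the $\phi_m$ forces $a$ to act trivially on every $X^{\otimes m}$, hence $a=0$ by faithfulness. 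I expect the main obstacle to be the bookkeeping in the second step: making precise the recursive extraction of $\phi_m$ and verifying the shift-intertwining identity requires care about how the associative product on $X^{\otimes m}$ interacts with the coordinatewise product used to define the bracket on $\mathcal{W}(X)$, and one must confirm that the component landing in $L_0$ at each level is genuinely independent of the order in which the iterated $\psi$'s are unwound. Once that compatibility is nailed down, injectivity and the homomorphism property follow formally.
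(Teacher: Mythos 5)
Your construction is essentially the paper's own proof: the components $\phi_m$ you extract are exactly the paper's inductively defined maps $\delta_i:L\to\prod_{0\le j\le i}(X^{\otimes j}\otimes L_0)$ (the paper builds the finite truncations via $\delta_{i+1}=((\mathrm{id}_X\otimes\delta_i)\leftthreetimes \mathrm{id}_{L_0})\circ\psi$, which is precisely your shift-intertwining identity in truncated form, and then passes to the product), and the injectivity argument via $\bigcap_m \mathrm{Ker}(L\to \mathrm{End}_k(X^{\otimes m}))=0$ is the same. So the proposal is correct and follows essentially the same route, differing only in presentation (direct assembly of $\Phi$ plus induction on levels, versus inductive truncations compatible with the projections).
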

 
  \begin{proof} We define inductively homomorphisms of Lie algebras
  $$
  \delta_i : L \to M_i = \prod_{0 \leq j \leq i} (X^{\otimes j} \otimes L_0)
  $$ 
  by $\delta_0 = \pi \psi$, where $\pi : (X \otimes_{k} L)  \leftthreetimes L_0 \to L_0$ is the canonical projection. To define $\delta_{i+1}$ define first 
  $$\mu_{i+1} : (X \otimes L)  \leftthreetimes L_0 \to
  X \otimes (\prod_{0 \leq j \leq i} (X^{\otimes  j} \otimes L_0)) \leftthreetimes L_0
  $$
  by $\mu_{i+1} = (id_X \otimes \delta_i) \leftthreetimes id_{L_0}$ and then set
  $$
  \delta_{i+1} = \mu_{i+1} \psi.
  $$
  Write $H = Ker (\delta_0) = Ker (\pi \psi)$. Then using that $\psi$ is injective we get that
  $$Ker (\delta_{i+1}) = \{ a \in L \ | \ \psi(a) \in X \otimes Ker (\delta_i) \}.$$
  By induction on $i$ it follows that $Ker (\delta_i) = Ker (L \to End_k( X^{\otimes i}))$, hence by the faithfulness of $\psi$ we get that $\cap_i Ker (\delta_i) = 0$.
  
  Let $\pi_{i+1} : M_{i+1} \to M_i$ be the canonical projection. Using the definition of the map $\delta_i$ it is easy to check that 
  $\pi_{i+1} \delta_{i+1} = \delta_i$ for every $i \geq 0$. Thus the maps $\{\delta_i \}_{i \geq 0}$ induce a homomorphism of Lie algebra $\delta : L \to  {\mathcal W}(X, L_0)$, whose kernel is $\cap_i Ker (\delta_i) = 0$. 
 
\end{proof}
    \section{The proof of Theorem A}
   
    We use the following notation : for $b,a \in L$ we define $b \circ a = [b,a]$ and for $\lambda$ an element of the universal enveloping algebra  $ U(L)$ the element $\lambda \circ a$ is obtained from $a$ by acting (on the left) with $\lambda$ i.e. for $\lambda = \lambda_1 \lambda_2$ we have $(\lambda_1 \lambda_2) \circ a = \lambda_1 \circ (\lambda_2 \circ a)$. In particular for  $m \in {\mathbb N}$ we have $b^m \circ a = b^{m-1} \circ [b,a]$.

a)     Let $Y = \{ x_1^{z_1} \ldots x_n^{z_n} \}_{z_1 \geq  0, \ldots, z_n \geq 0}$ be a basis of $X$ as a vector space over $k$ if $char(k) = 0$ and $Y  = \{ x_1^{z_1} \ldots x_n^{z_n}  \}_{ 0 \leq z_1, \ldots, z_n \leq p-1}$ if $char(k) = p$.

      Suppose that
     $$
     \psi(h) = \sum_{0 \leq i_1, \ldots, i_n \leq m_0} x_1^{i_1} \ldots x_n^{i_n}  \otimes a_{i_1, \ldots, i_n}\hbox{ for } h \in H \hbox{ and } \psi(y_i) = \partial / \partial_{x_i}.
     $$
     Then for $m_0 = m-1$, where $m$ is given  by condition 3 from the statement of Theorem A,
     $$
      \psi (y_1^{z_1} \ldots y_n^{z_n} \circ h)  =
      \psi(y_1^{z_1} \ldots y_n^{z_n}) \circ \psi(h) = $$ $$
      (\partial / \partial x_1)^{z_1} \ldots (\partial/ \partial x_n)^{z_n} \circ \sum_{0 \leq i_1, \ldots, i_n \leq m_0} x_1^{i_1} \ldots x_n^{i_n}  \otimes a_{i_1, \ldots, i_n} = $$
      $$ \sum_{0 \leq i_1, \ldots, i_n \leq m_0}
      (\partial / \partial x_1)^{z_1} \ldots (\partial/ \partial x_n)^{z_n}   (x_1^{i_1} \ldots x_n^{i_n})  \otimes a_{i_1, \ldots, i_n} = 
      $$
      $$
      \sum_{0 \leq i_1, \ldots, i_n \leq m_0}
      (\partial / \partial x_1)^{z_1} (x_1^{i_1} ) \ldots (\partial/ \partial x_n)^{z_n}   (x_n^{i_n})  \otimes a_{i_1, \ldots, i_n} =
      $$
      $$\sum_{0 \leq i_1, \ldots, i_n \leq m_0}
      i_1 \ldots (i_1- z_1+1) x_1^{i_1 - z_1} \ldots  (i_n \ldots (i_n- z_n + 1)) x_n^{i_n - z_n}  \otimes a_{i_1, \ldots, i_n}
      $$
      Then
       \begin{equation} \label{reforma}
      \theta(y_1^{z_1} \ldots y_n^{z_n} \circ h) = 
   (\epsilon \otimes id) (\psi (y_1^{z_1} \ldots y_n^{z_n} \circ h) ) =   \end{equation}
   $$
   \sum_{0 \leq i_1, \ldots, i_n \leq m_0}
      i_1 \ldots (i_1- z_1+1) \epsilon( x_1)^{i_1 - z_1} \ldots  (i_n \ldots (i_n- z_n + 1)) \epsilon(x_n)^{i_n - z_n}   \otimes a_{i_1, \ldots, i_n}=
   $$
  $$
   z_1 ! \ldots z_n ! a_{z_1, \ldots, z_n}, $$
   where the last equality follows from $\epsilon(x_i) = 0$ for every $1 \leq i \leq n$.
   Thus
   $$a_{z_1, \ldots, z_n} = ( z_1 ! \ldots z_n !)^{-1}  \theta(y_1^{z_1} \ldots y_n^{z_n} \circ h) \hbox{ if } char(k) = 0 \hbox{ or each } z_i \leq char(k) - 1  
    $$     
     Note that  by condition 3 from the statement of Theorem A
     $$\theta(y_1^{z_1} \ldots y_n^{z_n}  \circ h) = 0 \hbox{ if some } z_i \geq m_0 + 1 = m.
     $$
     Thus (\ref{main-eq}) holds.

  b)   Note that by  (\ref{main-eq})
     $$
     Ker (\psi) = \{ h \in H \mid \theta(y_1^{i_1} \ldots y_n^{i_n} \circ h) = 0 \hbox{ for all } i_1, \ldots, i_n \geq 0 \}.
     $$
     Observe that $Ker (\psi)$ is an ideal of $H$, since $\psi$ is a homomorphism of Lie algebras. Furthermore $$[y_j, Ker (\psi)] =  y_j \circ Ker (\psi) \subseteq Ker (\psi) \hbox{ for } 1 \leq j \leq n,$$
     hence $Ker ( \psi)$ is an ideal of $L$ contained in $Ker(\theta)$. 
     
c) Consider when $L$ is faithful $\psi$-self-similar Lie algebra.
  By (\ref{reforma}) and by the definition of the $L$ action  on $ X^{\otimes m}$ we deduce that for $v_1, \ldots, v_m \in X$
\begin{equation} \label{action-new}
h \cdot (v_1 \otimes \ldots \otimes v_m) =  \end{equation} $$ \sum_{i_1, \ldots, i_n} (i_1!)^{-1} \ldots (i_n!)^{-1} (x_1^{i_1} \ldots x_n^{i_n} v_1) \otimes \theta(y_1^{i_1} \ldots y_n^{i_n} \circ h)  \cdot (v_2 \otimes \ldots \otimes v_m).
$$
Thus $h \in H$ acts trivially on $ X^{\otimes m} $ if and only if $\theta(y_1^{i_1} \ldots y_n^{i_n} \circ h)$ acts trivially on $ X^{\otimes {(m-1)}}$ for every $i_1, \ldots, i_n \geq 0$. Note that $a \in L$ acts trivially on $X$ if and only if $a \in H$.

Let $I = \cap_m Ker (L \to End_k ( X^{\otimes m}))$. Then $I$ is an intersection of ideals of $L$, so is an ideal itself and $I \subseteq Ker (L \to {\mathcal Der } X) = H$.  By the above remark for every $h \in I$ we have that $\theta(y_1^{i_1} \ldots y_n^{i_n} \circ h) \in I$, in particular 
for $i_1 = \ldots = i_n = 0$ we get that $\theta(h) \in I$. Thus $I$ is an $\theta$-invariant ideal of $L$ that is contained in $H$. If any such ideal is trivial then $I$ is trivial and $L$ is faithful $\psi$-self-similar.

If $J$ is a non-zero $\theta$-invariant ideal of $L$ ( i.e. $\theta(J) \subseteq J$) and $J \subseteq H$ then for every $h \in J$ we have that $
y_1^{i_1} \ldots y_n^{i_n} \circ h \in J$ and $\theta(y_1^{i_1} \ldots y_n^{i_n} \circ h) \in \theta(J) \subseteq J$. We prove by induction on $i$ that $J$ acts trivially on $X^{\otimes i} $. Indeed since $J \subseteq H$ we get that $J$ acts trivially on $X$. For the inductive step assume that $J$ acts trivially on $ X^{ \otimes {(i-1)} }$ and note that $\theta(y_1^{i_1} \ldots y_n^{i_n} \circ J) \subseteq J$, so $\theta(y_1^{i_1} \ldots y_n^{i_n} \circ J)$ acts trivially on   $ X^{\otimes {(i-1)}}$, hence $J$ acts trivially on  $ X^{ \otimes {i} }$ by ( \ref{action-new} ).

d) By (\ref{main-eq}) for a vector subspace $W$ of $H$ we have $\psi(W) \subseteq X \otimes W$ if and only if for every $w \in W$ we have that 
$\theta( y_1^{i_1} \ldots y_n^{i_n} \circ w) \in W$ for all $ 0 \leq i_1, \ldots, i_n \leq m_0 = m-1$.

e) Finally we prove by induction on $m$ that if $\theta(H) = L$ then $U(L) \cdot (x^{p-1} \otimes \ldots \otimes x^{p-1}) =  X^{ \otimes m} $ i.e.  $L$ is a transitive $\psi$-self-similar Lie algebra. By the description of the $L$ action on $X^{ \otimes m } $ and  (\ref{main-eq})  we get that if $X = k[x]/ (x^p)$, $char(k) = p >0$ then for $v_m = x^{p-1} \otimes \ldots \otimes x^{p-1} \in  X^{ \otimes m }$ and $h_1, \ldots, h_s \in H$
$$
h_1 \ldots h_s \cdot  v_m= x^{p-1} \otimes \theta(h_1) \ldots \theta(h_s) \cdot v_{m-1}
$$
Indeed let $h \in H$ and $\psi(h) = \sum_i \widetilde{y}_i \otimes b_i$ for some $\widetilde{y}_i \in X, b_i \in L$. Then $\theta(h) = \sum_i \epsilon(\widetilde{y}_i) b_i$. Since $x^p = 0$ we have that $\widetilde{y}_i x^{p-1} = \epsilon(\widetilde{y}_i) x^{p-1} \in k x^{p-1}$ and so
$$
h \cdot v_m = \sum_i (\widetilde{y}_i x^{p-1}) \otimes (b_i \cdot v_{m-1}) = 
 \sum_i (\epsilon(\widetilde{y}_i) x^{p-1}) \otimes (b_i \cdot v_{m-1}) =$$ $$
 x^{p-1} \otimes  ( \sum_i \epsilon(\widetilde{y}_i) b_i) \cdot v_{m-1}) =
 x^{p-1} \otimes (\theta(h) \cdot v_{m-1}).
 $$ 
Then for $z \geq 1$  and $\psi(y_1) = \partial / \partial x \in {\mathcal Der} X$
$$
y_1^z \cdot (x^{p-1} \otimes \theta(h_1) \ldots \theta(h_s) \cdot v_{m-1}) = (\partial / \partial x_1)^z (x^{p-1})  \otimes (\theta(h_1) \ldots \theta(h_s) \cdot v_{m-1}) =
$$
$$
(p-1) \ldots (p-z) x^{p-1-z} \otimes \theta(h_1) \ldots \theta(h_s) \cdot v_{m-1}$$
By induction $U(L) \cdot v_{m-1} =  X^{ \otimes {(m-1)} }$ and $\theta(H) = L$, so
$ \sum_{s \geq 1; h_1, \ldots, h_s \in H} \theta(h_1) \ldots \theta(h_s) \cdot v_{m-1} =  X^{ \otimes {(m-1)} }$. This implies that $U(L) \cdot v_m = X^{  \otimes m }$.

         \section{Constructing self-similar Lie algebras  from virtual endomorphism : the case when the derivation part is abelian} \label{construct}
     
     In Section \ref{def-endo} we constructed a virtual endomorphism $\theta$ associated to a self-similar Lie algebra $L$. In this section 
     we start with a virtual endomorphsm of Lie algebras $\theta : H \to L$ and study   necessary conditions for $L$ to be $(X,L_0)$-self-similar Lie algebra  with associated virtual endomorphism $\theta$, when $L_0$ is abelian.

    \medskip
  {\bf Proof of Theorem B}

By Theorem A we do not have a choice but  define for $h \in H$
\begin{equation} \label{reforma2}
\psi(h) =
 \sum_{0 \leq i_1, \ldots, i_n \leq m_0} ( i_1 ! \ldots i_n !)^{-1} x_1^{i_1} \ldots x_n^{i_n}  \otimes 
   \theta(y_1^{i_1} \ldots y_n^{i_n} \circ h)
\end{equation}
where if $char(k) = p$ then $m_0 = p-1$ and if $char(k) = 0$ then $m_0 = m-1$ from the statement of the theorem.

\medskip
We claim that $\psi$ is a homomorphism of Lie algebras. Note that $\psi$ is a linear map by construction.
Let $h_1, h_2 \in H$. Then
$$
[\psi(h_1), \psi(h_2)] =
[
 \sum_{0 \leq i_1, \ldots, i_n \leq m_0}  ( i_1 ! \ldots i_n !)^{-1}  x_1^{i_1} \ldots x_n^{i_n}  \otimes 
 \theta(y_1^{i_1} \ldots y_n^{i_n} \circ \  h_1),$$ $$
 \sum_{0 \leq j_1, \ldots, j_n \leq m_0}  ( j_1 ! \ldots j_n !)^{-1} x_1^{j_1} \ldots x_n^{j_n}  \otimes 
  \theta(y_1^{j_1} \ldots y_n^{j_n} \circ \ h_2)
]=
$$
$$
\sum_{0 \leq i_1, \ldots, i_n, j_1, \ldots, j_n \leq m_0} ( i_1 ! \ldots i_n !)^{-1}  ( j_1 ! \ldots j_n !)^{-1} x_1^{i_1+ j_1} \ldots x_n^{i_n + j_n}
  \otimes $$ $$[\theta(y_1^{i_1} \ldots y_n^{i_n} \circ \  h_1), \theta(y_1^{j_1} \ldots y_n^{j_n} \circ \ h_2)].
  $$
Observe that for an arbitrary element $b \in L$ we have
 $$
 b^k \circ [h_1, h_2] = \sum_{i + j = k} {k \choose i} [b^i \circ h_1, b^j \circ h_2] 
 $$
and hence
$$
(y_1^{k_1} \ldots y_n^{k_n}) \circ [h_1, h_2] = \sum_{i_s + j_s = k_s} ( \prod_{1 \leq s \leq n} {{k_s} \choose {i_s}}) [ (y_1^{i_1} \ldots y_n^{i_n}) \circ h_1, (y_1^{j_1} \ldots y_n^{j_n}) \circ h_2]   
$$
Then for $h_1, h_2 \in H$ we have
$$
\psi([h_1, h_2]) = 
 \sum_{0 \leq k_1, \ldots, k_n \leq m_0} ( k_1 ! \ldots k_n !)^{-1} x_1^{k_1} \ldots x_n^{k_n}  \otimes 
   \theta(y_1^{k_1} \ldots y_n^{k_n} \circ [h_1, h_2]) =
$$
 $$ \sum_{0 \leq k_1, \ldots, k_n \leq m_0}
 \sum_{i_s + j_s = k_s} ( k_1 ! \ldots k_n !)^{-1} (\prod_{1 \leq s \leq n} {{k_s} \choose {i_s}}) x_1^{k_1} \ldots x_n^{k_n}  \otimes $$ $$[ \theta((y_1^{i_1} \ldots y_n^{i_n}) \circ h_1), \theta( (y_1^{j_1} \ldots y_n^{j_n}) \circ h_2)] =   
 $$
 $$
 \sum_{0 \leq i_1, \ldots, i_n, j_1, \ldots, j_n \leq m_0} ( i_1 ! \ldots i_n !)^{-1}  ( j_1 ! \ldots j_n !)^{-1} x_1^{i_1+ j_1} \ldots x_n^{i_n + j_n} \otimes$$ $$
  [\theta(y_1^{i_1} \ldots y_n^{i_n} \circ \  h_1), \theta(y_1^{j_1} \ldots y_n^{j_n} \circ \ h_2)] = [\psi(h_1), \psi(h_2)].
 $$
 We have defined $\psi$ on $H$ by (\ref{reforma2}), the restriction of $\psi$ on $L_0$ is defined by condition 2 from the statement of Theorem B. Thus $\psi$ is defined on $H \rtimes L_0$ and
  to complete the proof of the fact that $\psi$ is a Lie algebra homomorphism we need to show that
 $$
 [\psi(y_j), \psi(h)] = \psi(y_j \circ h) = \psi([y_j,h]) \hbox{ for } h \in H, 1 \leq j \leq n.
 $$
 Note that
 $$
 [\psi(y_j), \psi(h)] =[\partial/ \partial x_j, 
 \sum_{0 \leq i_1, \ldots, i_n \leq m_0} ( i_1 ! \ldots i_n !)^{-1} x_1^{i_1} \ldots x_n^{i_n}  \otimes 
   \theta(y_1^{i_1} \ldots y_n^{i_n} \circ h) ] = 
 $$
 $$\sum_{0 \leq i_1, \ldots, i_n \leq m_0} ( i_1 ! \ldots i_n !)^{-1} \partial/ \partial x_j(x_1^{i_1} \ldots x_n^{i_n})  \otimes 
   \theta(y_1^{i_1} \ldots y_n^{i_n} \circ h) =
  $$
  $$
 \sum_{0 \leq i_1, \ldots, i_n \leq m_0} ( i_1 ! \ldots (i_j - 1)! \ldots  i_n !)^{-1} x_1^{i_1} \ldots x_j^{i_j - 1} \ldots  x_n^{i_n}  \otimes 
   \theta(y_1^{i_1} \ldots y_n^{i_n} \circ h).$$
  On other hand
  $$
  \psi([y_j,h])=  \sum_{0 \leq i_1, \ldots, i_n \leq m_0} ( i_1 ! \ldots i_n !)^{-1} x_1^{i_1} \ldots x_n^{i_n}  \otimes 
   \theta(y_1^{i_1} \ldots y_n^{i_n} \circ [y_j,h]) =
  $$
  $$
  \sum_{0 \leq i_1, \ldots, i_n \leq m_0} ( i_1 ! \ldots i_n !)^{-1} x_1^{i_1} \ldots x_n^{i_n}  \otimes 
   \theta(y_1^{i_1} \ldots y_j^{i_j + 1} \ldots y_n^{i_n} \circ h) =
   $$
  $$
   \sum_{0 \leq i_1, \ldots, i_{j-1}, i_{j+1}, i_n \leq m_0} ( i_1 ! \ldots i_{j-1}! m_0! i_{j+1}! \ldots i_n !)^{-1} x_1^{i_1} \ldots x_{j-1}^{i_{j-1}} x_j^{m_0}  x_{j+1}^{i_{j+1}} \ldots  x_n^{i_n} $$ $$ \otimes 
   \theta(y_1^{i_1} \ldots y_j^{m_0 + 1} \ldots y_n^{i_n} \circ h))
  + [\psi(y_j), \psi(h)] = [\psi(y_j), \psi(h)].$$
  The last equality follows since
  $$
  y_1^{i_1} \ldots y_j^{m_0 + 1} \ldots y_n^{i_n} \circ h = 
   y_j^{m_0 + 1} \circ (y_1^{i_1} \ldots y_{j-1}^{i_{j-1}} y_{j+1}^{i_{j+1}} \ldots y_n^{i_n} \circ h)
   $$
   and so
  $$
  \theta(y_1^{i_1} \ldots y_j^{m_0 + 1} \ldots y_n^{i_n} \circ h) = 0.$$

     \section{Constructing self-similarity from virtual endomorphism : the  case when the derivation part is  either a subalgebra of $sl_2$ or  the Witt algebra}    \label{sl2-Witt}
   
     Recall that for 
      $X = k[x]/ (x^p)$, where $k$ is a field of characteristic $p > 0$, the Lie algebra ${\mathcal Der}X$ is called the Witt algebra or Jacobson-Witt algebra.
      It has a basis as a vector space over $k$
      $$
      \{ e_i = x^{i+1} \partial / \partial x \}_{-1 \leq i \leq p-2}
      $$
      and has the following relations
      \begin{equation} \label{relations}
      [e_i, e_j] = (j-i) e_{i+j}.
      \end{equation}
      Observe that $[e_{-1},e_0 ] = e_{-1}, [e_{-1}, e_1] = 2 e_0$ and $[e_0, e_1] = e_1$. Thus the span of $e_{-1}, e_0$ and $e_1$ is a Lie    subalgebra of ${\mathcal Der}X$ isomorphic to $sl_2(k)$ if $char(k) \not= 2$.
      
      The following property of the Witt algebra ${\mathcal Der}(k[x]/(x^p))$ is probably known but as we could not find a reference we give a proof.
      
      \begin{lemma} Let $L_0$ be a proper subalgebra of the Witt algebra 
      ${\mathcal Der}(k[x]/(x^p))$, where $char(k) = p > 0$, such that $L_0$   contains $e_{-1} = \partial/ \partial x$. Then $L_0$ is contained in the span of $e_{-1}, e_0, e_1$.
      \end{lemma}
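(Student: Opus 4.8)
The plan is to argue by contradiction: assuming $L_0$ contains an element not lying in the span of $e_{-1},e_0,e_1$, I will show $L_0$ is all of ${\mathcal Der}(k[x]/(x^p))$, contradicting properness. First I would dispose of the small primes $p=2$ and $p=3$: there one has $x^2\,\partial/\partial x=0$ (so $e_1=0$) or $\dim {\mathcal Der}(k[x]/(x^p))\leq 3$, and in either case the span of $e_{-1},e_0,e_1$ is already the whole Witt algebra, so there is nothing to prove. Hence I may assume $p\geq 5$, in which case the span of $e_{-1},e_0,e_1$ is a genuine ($3$-dimensional) proper subspace of the $p$-dimensional Witt algebra.

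The engine of the argument is the operator $ad(e_{-1})$. From $[e_i,e_j]=(j-i)e_{i+j}$ one gets $[e_{-1},e_i]=(i+1)e_{i-1}$, so $ad(e_{-1})$ lowers the index by one and kills $e_{-1}$; iterating, $ad(e_{-1})^m(e_i)$ equals the scalar $(i+1)i(i-1)\cdots(i-m+2)$ times $e_{i-m}$, which is a product of integers that are nonzero and of absolute value $<p$ whenever $i\geq m-1$ and $i\leq p-2$, hence a unit in $k$. Now suppose some element of $L_0$ has a nonzero $e_j$-component with $j\geq 2$; choose $j$ maximal with this property and pick $v\in L_0$ with nonzero $e_j$-coefficient. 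By maximality every homogeneous component of $v$ has index $\leq j$, so $w:=ad(e_{-1})^{j-2}(v)$ lies in $L_0$, has all components of index $\leq 2$, and its $e_2$-coefficient is $\frac{(j+1)!}{3!}$ times the $e_j$-coefficient of $v$, hence nonzero (here $p\geq 5$ and $j+1\leq p-1$ guarantee $(j+1)!/3!$ is a unit). Write $w=a e_2+b e_1+c e_0+d e_{-1}$ with $a\neq 0$.

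Next I would peel off $e_0,e_1,e_2$ one at a time. Computing, $ad(e_{-1})(w)=3a\,e_1+2b\,e_0+c\,e_{-1}\in L_0$ and $ad(e_{-1})^2(w)=6a\,e_0+2b\,e_{-1}\in L_0$; since $6a\neq 0$ and $e_{-1}\in L_0$ by hypothesis, $e_0\in L_0$, then the first relation gives $e_1\in L_0$, and $w$ itself then gives $e_2\in L_0$. Finally, using $[e_1,e_i]=(i-1)e_{i+1}$, from $e_2\in L_0$ I obtain $e_3,e_4,\ldots,e_{p-2}\in L_0$ in turn, the coefficients $i-1$ for $2\leq i\leq p-3$ all being units mod $p$. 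Thus $L_0$ contains every basis vector $e_i$, so $L_0={\mathcal Der}(k[x]/(x^p))$, contradicting the hypothesis that $L_0$ is proper. Hence no element of $L_0$ has a nonzero $e_j$-component with $j\geq 2$, i.e. $L_0\subseteq k e_{-1}\oplus k e_0\oplus k e_1$.

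The only subtle point is the characteristic-$p$ bookkeeping: at each step I must verify that the integer scalars produced — $(j+1)!/3!$ in the first reduction, then $6a$, $3a$, and the $i-1$ in the climb — are nonzero in $k$, and this is exactly where the reductions $p\neq 2,3$ and the bounds $j\leq p-2$, $i\leq p-3$ are used. I expect this to be the only place that needs care; everything else is direct linear algebra inside the $p$-dimensional Witt algebra, organized around the index-lowering action of $ad(e_{-1})$ and the index-raising action of $ad(e_1)$.
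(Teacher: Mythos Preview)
Your proof is correct and follows essentially the same approach as the paper's: use $ad(e_{-1})$ to lower the top index and extract the low basis vectors $e_0,e_1,e_2$, then use $ad(e_1)$ to raise indices and reach a contradiction. The only organizational difference is that the paper, after obtaining $e_0,\ldots,e_{j_0}\in L_0$, takes a single step $[e_1,e_{j_0}]=(j_0-1)e_{j_0+1}$ to contradict the maximality of $j_0$, whereas you stop lowering at $e_2$ and then climb all the way to $e_{p-2}$ to contradict properness directly; both arguments rest on the same bracket computations and the same mod-$p$ nonvanishing of the scalars involved.
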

      
      \begin{proof} Let $f \in L_0 \setminus k e_{-1}$, then for some $k_{-1}, k_0, \ldots, k_{p-2} \in k$ we have
      $
      f =k_{-1} e_{-1} +  k_0 e_0 + k_1 e_1 + \ldots + k_{p-2} e_{p-2}
      $. Suppose that $k_{j_0} \not= 0$ and $k_j = 0$ for all $j > j_0$ and for all possible $f$. If $j_0 \leq 1$ we are done. From now on we can suppose that $2 \leq j_0$.
      Then $$f_1 =f - k_{-1} e_{-1} =  k_0 e_0 + k_1 e_1 + \ldots + k_{j_0} e_{j_0} \in L_0.$$ 
      Observe that 
      $$
      e_{-1}^{j_0} \circ f_1 = k_{j_0 - 1} j_0! e_{-1} + k_{j_0} (j_0+1)! e_0 \in L_0 \hbox{ and so } k_{j_0} (j_0+1)! e_0 \in L_0. $$
      Since $k_{j_0} \not=0$ we deduce that $$e_0 \in L_0.$$ Then
      $$
      e_{-1}^{j_0-1} \circ f_1 \in k e_{-1} \oplus k e_0 \oplus k_{j_0} (j_0 + 1) \ldots 3 e_1
      $$
      is an element of $L_0$
      and since $e_0, e_{-1} \in L_0$ we get that
      $$e_1 \in L_0.$$
      Considering $
      e_{-1}^{j_0-2} \circ f_1$ and repeating  the above calculation we deduce that $e_2 \in L_0$. Continuing in the same fashion we get that  $$e_0, \ldots, e_{j_0} \in L_0.$$   Since $L_0$ is a proper Lie subalgebra of the Witt algebra we have  that $j_0 \leq p-3$. Then $[e_1, e_{j_0}] = (j_0 - 1) e_{1 + j_0} \in L_0$ a contradiction with the choice of $j_0$.
      \end{proof}
     \begin{theorem} \label{sl-2}
     Let $X = k[x]/ (x^p)$ where $k$ is a field of positive characteristic $p \not= 2$.
     Let $L$ be a Lie algebra over $k$, $H$ an ideal of $L$ such that : 
     
      a)  the short exact sequence of Lie algebras $H \to L \to L/H$ splits, i.e. $$L = H  \leftthreetimes L_0, \hbox{ where }L_0 \simeq L/H$$
      
         b) $L_0 $ is a Lie subalgebra of the Witt algebra ${\mathcal Der}X$ that contains $e_{-1}$ i.e.  $L_0$ has a basis $\{ e_{-1}, \ldots, e_{j_0} \}$ as a vector space over $k$, where either $j_0 = p - 2$   or $j_0 \leq 1$.   
         
         Let $$\theta : H \to L$$ be a Lie algebra homomorphism such that for every $h \in H$ :
         
         1.  $\theta(e_{-1}^p \circ h) = 0$;
         
         2. $
   \theta(e_{-1}^i e_j \circ h) = 0 \hbox{ for } 0 \leq i \leq j, 0 \leq j \leq j_0$;
   
         3. $
   i !  
   \theta(e_{-1}^{i-j} \circ h) = ( i-j-1) !   
   \theta(e_{-1}^i e_j \circ h) \hbox{ for } 0 \leq j \leq j_0,  j+1 \leq i \leq p-1.$

     Then
      there is a homomorphism of Lie algebras
      $$\psi :  L \to L \wr {\mathcal Der } X = (X \otimes L) \leftthreetimes {\mathcal Der}X
     $$
     such that 
     $\theta$ is the virtual endomorphism associated to $\psi$ with respect to the augmentation map $\epsilon : X \to k$ (i.e. $\epsilon(x) = 0$) and 
    the restriction of $\psi$ on $L_0$ is the inclusion of $L_0$ in ${\mathcal Der}X$. 
     \end{theorem}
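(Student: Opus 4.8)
The plan is to define $\psi$ in the only way Theorem A permits, and then to verify it is a homomorphism. On the ideal $H$ I would use the rank-one instance of formula (\ref{reforma2}) with $y_1 = e_{-1}$ and $m_0 = p-1$, namely
$$\psi(h) = \sum_{0 \leq i \leq p-1} (i!)^{-1}\, x^i \otimes \theta(e_{-1}^i \circ h) \hbox{ for } h \in H,$$
and on $L_0$ I would take $\psi(e_j) = e_j \in {\mathcal Der}X$ for $-1 \leq j \leq j_0$; since $L = H \oplus L_0$ as a vector space, this defines a linear map $\psi : L \to (X \otimes L) \leftthreetimes {\mathcal Der}X$. Once $\psi$ is known to be a Lie homomorphism the remaining assertions are immediate: $\pi \psi$ vanishes on $H$ and restricts on $L_0$ to the inclusion into ${\mathcal Der}X$, which is injective, so $Ker(\pi\psi) = H$; and since $\epsilon(x) = 0$ only the $i = 0$ summand of $\psi(h)$ survives under $\epsilon \otimes id$, whence $(\epsilon \otimes id)\psi(h) = \theta(h)$, i.e. $\theta$ is the virtual endomorphism attached to $\psi$. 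So everything reduces to checking that $\psi$ preserves brackets on a spanning set of $L$.

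There are three families of brackets to check. The identities $[\psi(h_1),\psi(h_2)] = \psi([h_1,h_2])$ and $[\psi(e_{-1}),\psi(h)] = \psi([e_{-1},h])$ for $h_1,h_2,h \in H$ are exactly the computations in the proof of Theorem B specialized to $n = 1$ with $y_1 = e_{-1}$; their only ingredients are that $\theta$ is a homomorphism, the identity $e_{-1}^k \circ [h_1,h_2] = \sum_{i+j=k}\binom{k}{i}[e_{-1}^i\circ h_1,\, e_{-1}^j\circ h_2]$, and the vanishing $\theta(e_{-1}^p \circ h) = 0$, which is condition 1. The identities $[\psi(e_i),\psi(e_j)] = \psi([e_i,e_j])$ among basis vectors of $L_0$ are automatic, since $\psi|_{L_0}$ is the inclusion of the \emph{subalgebra} $L_0$ into ${\mathcal Der}X$ and $[e_i,e_j] \in L_0$. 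What remains is the third family, $[\psi(e_j),\psi(h)] = \psi([e_j,h])$ for $0 \leq j \leq j_0$ and $h \in H$, and this is precisely what conditions 2 and 3 are built to supply.

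For this family I would compute both sides in $X \otimes L$ and compare the coefficients of $x^l$. Using $e_j = x^{j+1}\,\partial/\partial x$, so that $e_j(x^i) = i\,x^{i+j}$, together with the truncation $x^m = 0$ for $m \geq p$, one obtains
$$[\psi(e_j),\psi(h)] = \sum_{l=j+1}^{p-1} ((l-j-1)!)^{-1}\, x^l \otimes \theta(e_{-1}^{l-j}\circ h);$$
and since $[e_j,h] = e_j \circ h \in H$, the formula defining $\psi$ on $H$ gives
$$\psi([e_j,h]) = \sum_{l=0}^{p-1} (l!)^{-1}\, x^l \otimes \theta(e_{-1}^l e_j \circ h).$$
Comparing the coefficient of $x^l$: for $0 \leq l \leq j$ the first expression has no such term, so we need $\theta(e_{-1}^l e_j \circ h) = 0$, which is condition 2 with $i = l$; for $j+1 \leq l \leq p-1$ the two coefficients agree if and only if $(l-j-1)!\,\theta(e_{-1}^l e_j \circ h) = l!\,\theta(e_{-1}^{l-j}\circ h)$, which is condition 3 with $i = l$. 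Hence $\psi$ is a Lie algebra homomorphism and the theorem follows.

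I expect the only real difficulty to be organizational: tracking the truncation $x^m = 0$ correctly when reindexing $i \mapsto i+j$, splitting the range of $l$ at $l = j$, and observing that the two permitted shapes of $L_0$ (the full Witt algebra, or a subalgebra of the span of $e_{-1},e_0,e_1$, as forced by the preceding lemma) need no separate treatment, since the argument uses only that $L_0$ is a subalgebra of ${\mathcal Der}X$ containing $e_{-1}$.
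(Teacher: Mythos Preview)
Your proposal is correct and follows essentially the same route as the paper: define $\psi$ on $H$ by the one-variable instance of the Theorem B formula, on $L_0$ by inclusion into ${\mathcal Der}X$, invoke Theorem B (applied to $H \leftthreetimes k e_{-1}$) for the brackets $[h_1,h_2]$ and $[e_{-1},h]$, and then verify $[\psi(e_j),\psi(h)]=\psi([e_j,h])$ for $0\le j\le j_0$ by the same coefficient comparison, which reduces exactly to conditions 2 and 3. Your added remarks---that $\psi|_{L_0}$ preserves brackets automatically, that $(\epsilon\otimes id)\psi=\theta$ on $H$, and that the two shapes of $L_0$ require no separate treatment---are correct and simply make explicit what the paper leaves implicit.
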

     {\bf Remark} If $p = 2$ there are more options for $L_0 \subseteq k e_{-1} \oplus k e_0 \oplus k e_1$.
     \begin{proof}
     By the proof of Theorem B applied for the Lie algebra $H \leftthreetimes k e_{-1}$ i.e. $n = 1$, $x = x_1$ and $y_1 = e_{-1} = \partial/ \partial x_1$, we deduce there is a homomorphism of Lie algebras
     $$
     \psi : H \to X \otimes L
     $$
     given by
     $$
\psi(h) =
 \sum_{0 \leq i \leq p-1} ( i ! )^{-1} x^{i} \otimes 
   \theta(e_{-1}^i \circ h)
$$
and 
$$
      \psi([e_{-1}, h]) = [\psi(e_{-1}), \psi(h)] \hbox{ for every } h \in H.
     $$
     Now we define
     $$\psi : L_0 \to (X \otimes L) \leftthreetimes {\mathcal Der} X
     $$
     as the inclusion of $L_0$ in ${\mathcal Der}X$.

     Finally to prove that this gives a homomorphism of Lie algebras
     $$
     \psi : L \to (X \otimes L) \leftthreetimes {\mathcal Der} X
     $$
     we have to show that 
     \begin{equation} \label{condition1}
      \psi([e_{j}, h]) = [\psi(e_{j}), \psi(h)] \hbox{ for }0 \leq j \leq j_0 \hbox{ and } h \in H.
     \end{equation}
     Note that
     $$
     [\psi(e_j), \psi(h)] = [e_j, 
 \sum_{0 \leq i \leq p-1} ( i ! )^{-1} x^{i} \otimes 
   \theta(e_{-1}^i \circ h)] = 
 \sum_{0 \leq i \leq p-1} ( i ! )^{-1} e_j(x^{i}) \otimes 
   \theta(e_{-1}^i \circ h) =$$
   $$
   \sum_{1 \leq i \leq p-1} ( (i-1) ! )^{-1} x^{i+j} \otimes 
   \theta(e_{-1}^i \circ h) = 
     \sum_{j+1 \leq i \leq p-1} ( (i-j-1) ! )^{-1} x^{i} \otimes 
   \theta(e_{-1}^{i-j} \circ h). 
   $$
   On other hand
   $$
   \psi([e_j,h]) =  \psi(e_j \circ h ) =
 \sum_{0 \leq i \leq p-1} ( i ! )^{-1} x^{i} \otimes 
   \theta(e_{-1}^i e_j \circ h).
   $$
   Then (\ref{condition1}) is equivalent to
   \begin{equation} \label{condition2}
   \theta(e_{-1}^i e_j \circ h) = 0 \hbox{ for } 0 \leq i \leq j, 0 \leq j \leq j_0 
   \end{equation}
   and
   \begin{equation} \label{condition3}
   ( (i-j-1) ! )^{-1}  
   \theta(e_{-1}^{i-j} \circ h) = ( i ! )^{-1}  
   \theta(e_{-1}^i e_j \circ h) \hbox{ for } j+1 \leq i \leq p-1, 0 \leq j \leq j_0.
   \end{equation} 
   Note that as a corollary of (\ref{condition2}) we get
   \begin{equation} \label{condition4}
   \theta(e_j \circ h) = 0 \hbox{ for } 0 \leq j \leq j_0.
   \end{equation}
     \end{proof}

     \section{Constructing self-similarity from virtual endomorphism : the  case when the derivation part is  the Frank algebra or $sl_{n+1}(k)$.}  \label{Frank-section}
   
Let $k$ be a field of characteristic $p > 0 $. Set $X = k[x_1, \ldots, x_n]/ I$, where $I$ is the ideal $(x_1^p, \ldots, x_n^p)$. ${\mathcal Der} X$ is  called the Jacobson-Witt algebra, it is the Lie algebra $W(n, \underline{1})$ from \cite[Ch.~4]{Stradebook}. The Jacobson-Witt algebra is simple unless $n = 1, p = 2$.

     There is a Lie algebra ${\mathcal L}$ defined by Frank ( see \cite{Frank}, \cite{Wilson}) as the vector subspace of the vector space with a basis  $$\{ y_i = \partial / \partial x_i,   y_{i,j} = x_i \partial / \partial x_j , 1 \leq i,j \leq n\}$$ of the elements whose sum of  coordinates of $x_i \partial / \partial x_i$ for $i \leq n$ is 0.   The derived subalgebra $[{\mathcal L},{\mathcal L}]$ is a simple Lie algebra (of non-classical type).

     \begin{theorem} \label{Frank0}
     Let $X = k[x_1, \ldots, x_n]/ (x_1^p, \ldots, x_n^p)$, where $k$ is a field of positive characteristic $p$.
     Let $L$ be a Lie algebra over $k$, $H$ an ideal of $L$ such that : 
     
      a)  the short exact sequence of Lie algebras $H \to L \to L/H$ splits, i.e. $$L = H  \leftthreetimes L_0, \hbox{ where }L_0 \simeq L/H;$$
      
         b) $L_0 $ is the Frank Lie algebra  ${\mathcal L}$ of the Jacobson-Witt algebra ${\mathcal Der}X$. 
         
         Let $$\theta : H \to L$$ be a Lie algebra homomorphism such that for every $h \in H$ :
         
         1.  $\theta(y_i^p \circ h) = 0$ for  $ 1 \leq i \leq n$;
         
         2.    $
   \theta(y_1^{i_1} \ldots y_n^{i_n} y_{k,j} \circ \ h) =  i_k 
   \theta(y_1^{i_1} \ldots y_{k}^{i_k-1} \ldots y_j^{i_j+1} \ldots  y_n^{i_n}  \circ \ h) 
   $ for
   $0 \leq i_1, i_{k-1}, i_{k+1},$ $  \ldots, i_{j-1}, i_{j+1},  \ldots, i_n \leq p-1; 1 \leq i_k \leq p-1, 0 \leq i_j \leq p-2$ and $k \not= j$;

   3.  $
   \theta(y_1^{i_1} \ldots y_n^{i_n} y_{k,j} \circ h) = 0
    $ for  $i_k =0$ or $i_j = p-1$  for $k \not= j$;
      
     4. $
    \theta(y_1^{i_1} \ldots y_n^{i_n} (y_{j,j} - y_{1,1}) \circ h)  = 
   (i_j - i_1) 
   \theta(y_1^{i_1} \ldots y_n^{i_n} \circ h)$ for $ 2 \leq j \leq n-1$.
     
     Then
      there is a homomorphism of Lie algebras
      $$\psi :  L \to L \wr {\mathcal Der } X = (X \otimes L) \leftthreetimes {\mathcal Der}X
     $$
     such that 
     $\theta$ is the virtual endomorphism associated to $\psi$ with respect to the augmentation map $\epsilon : X \to k$ (i.e. $\epsilon(x_i) = 0$ for $ 1 \leq i \leq n$) and 
    the restriction of $\psi$ on $L_0$ is the inclusion of $L_0$ in ${\mathcal Der}X$.
    
     \end{theorem}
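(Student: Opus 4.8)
The plan is to follow the pattern of the proof of Theorem \ref{sl-2}: first use Theorem B to build $\psi$ on the ideal $H$ together with the abelian ``translation part'' $D_0$ of the Frank algebra, then extend $\psi$ to all of $L_0$ by the inclusion $L_0 \hookrightarrow {\mathcal Der}X$, and finally check that the brackets mixing $H$ with the remaining basis elements of $L_0$ are respected; the last check is exactly where conditions 2, 3 and 4 enter.

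Let $D_0 \subseteq L_0 = {\mathcal L}$ be the abelian subalgebra with basis $y_1, \ldots, y_n$, so $y_i = \partial/\partial x_i$. To apply Theorem B to the split extension $H \leftthreetimes D_0$ I need $\theta(b^p \circ h) = 0$ for every $b \in D_0$ and $h \in H$; writing $b = \sum_i c_i y_i$ and using that the $y_i$ commute, in characteristic $p$ one has $ad(b)^p = \sum_i c_i^p\, ad(y_i)^p$, so $b^p \circ h = \sum_i c_i^p (y_i^p \circ h)$ and condition 1 with linearity of $\theta$ gives $\theta(b^p \circ h) = 0$. Theorem B then produces a Lie homomorphism on $H \leftthreetimes D_0$ given by (\ref{main-eq}), i.e. $\psi(h) = \sum_{0 \le i_1, \ldots, i_n \le p-1} (i_1!\cdots i_n!)^{-1} x_1^{i_1}\cdots x_n^{i_n} \otimes \theta(y_1^{i_1}\cdots y_n^{i_n} \circ h)$, compatible with $\epsilon(x_i)=0$ and with $\psi(y_i) = \partial/\partial x_i$, and in particular $[\psi(y_i),\psi(h)] = \psi([y_i,h])$ for all $i$ and $h \in H$. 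Now extend $\psi$ to $L_0$ as the inclusion into ${\mathcal Der}X$ (consistent with the previous definition on $D_0$). Since $L = H \oplus L_0$ as a vector space and the bracket of $L \wr {\mathcal Der}X$ is bilinear, and since $\psi|_{L_0}$ is a homomorphism while the brackets within $H$ and the brackets $[y_i,H]$ are already handled, it remains only to verify $[\psi(\ell),\psi(h)] = \psi([\ell,h])$ for $h \in H$ and $\ell$ running over the basis elements $y_{k,j}$ ($k \ne j$) and $y_{j,j}-y_{1,1}$. (By the Jacobi identity it would in fact suffice to do this for $\ell$ in a Lie-generating set of $L_0$, and $\{y_i\} \cup \{y_{k,j}: k\ne j\}$ is such a set because $[y_{j,1},y_{1,j}] = y_{j,j}-y_{1,1}$; from this viewpoint condition 4 is essentially forced by conditions 2 and 3 and the linear independence of the monomials $x_1^{i_1}\cdots x_n^{i_n}$ in $X$.)

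For $\ell = y_{k,j}$ with $k \ne j$, I would expand $[\psi(y_{k,j}),\psi(h)] = [x_k\,\partial/\partial x_j,\ \psi(h)]$ using the explicit formula for $\psi(h)$: applying $x_k\,\partial/\partial x_j$ to $x_1^{i_1}\cdots x_n^{i_n}$ yields $i_j\, x_1^{i_1}\cdots x_k^{i_k+1}\cdots x_j^{i_j-1}\cdots x_n^{i_n}$, which vanishes when $i_j = 0$ (coefficient) or $i_k = p-1$ (since $x_k^p = 0$). Re-indexing the surviving terms by $a_k = i_k+1$, $a_j = i_j-1$ (other exponents fixed) converts the coefficient $i_j(i_1!\cdots i_n!)^{-1}$ into $a_k(a_1!\cdots a_n!)^{-1}$, so $[\psi(y_{k,j}),\psi(h)] = \sum a_k (a_1!\cdots a_n!)^{-1} x_1^{a_1}\cdots x_n^{a_n} \otimes \theta(y_1^{a_1}\cdots y_k^{a_k-1}\cdots y_j^{a_j+1}\cdots y_n^{a_n}\circ h)$ over $1 \le a_k \le p-1$, $0 \le a_j \le p-2$; on the other hand $\psi([y_{k,j},h]) = \psi(y_{k,j}\circ h) = \sum (a_1!\cdots a_n!)^{-1} x_1^{a_1}\cdots x_n^{a_n}\otimes\theta(y_1^{a_1}\cdots y_n^{a_n}y_{k,j}\circ h)$, and conditions 2 and 3 rewrite the tensor factor as $a_k\,\theta(y_1^{a_1}\cdots y_k^{a_k-1}\cdots y_j^{a_j+1}\cdots y_n^{a_n}\circ h)$, killing exactly the terms with $a_k = 0$ or $a_j = p-1$, so the two sides agree. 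For $\ell = y_{j,j}-y_{1,1}$ the computation is shorter: $(x_j\,\partial/\partial x_j - x_1\,\partial/\partial x_1)$ multiplies $x_1^{i_1}\cdots x_n^{i_n}$ by $i_j - i_1$, and comparing $[\psi(y_{j,j}-y_{1,1}),\psi(h)]$ with $\psi([y_{j,j}-y_{1,1},h])$ term by term reduces precisely to condition 4 (the case $j=n$, if not directly covered, following from $y_{n,n}-y_{1,1} = [y_{n,1},y_{1,n}]$). Finally $\theta = (\epsilon \otimes id_L)\psi$ on $H$ is read off from (\ref{main-eq}) using $\epsilon(x_i)=0$, and $\psi|_{L_0}$ is the inclusion by construction. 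I expect the only genuine work to be the factorial bookkeeping in the $y_{k,j}$ step — tracking the shift $i_k \mapsto i_k+1$, $i_j \mapsto i_j-1$ and matching the boundary terms $i_j = 0$ and $i_k = p-1$ against the vanishing clauses of condition 3 — everything else being routine once Theorem B is in hand.
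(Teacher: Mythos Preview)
Your proposal is correct and follows essentially the same approach as the paper's own proof: apply Theorem B to $H \leftthreetimes D_0$ using condition 1, extend $\psi$ to $L_0$ by inclusion, then verify the mixed brackets with $y_{k,j}$ via conditions 2--3 and with $y_{j,j}-y_{1,1}$ via condition 4. You are slightly more careful than the paper in two places --- you spell out the Freshman's-dream argument $ad(b)^p = \sum_i c_i^p\, ad(y_i)^p$ needed to invoke Theorem B from condition 1 alone, and you address the $j=n$ case of condition 4 via $y_{n,n}-y_{1,1}=[y_{n,1},y_{1,n}]$ --- but the structure and computations are the same.
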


     \begin{proof} Write $y_i$ for the element $\partial / \partial x_i \in L_0$ and $y_{i,j}$ for $x_i \partial / \partial x_j \in {\mathcal Der} X$. By Theorem B applied for $H \leftthreetimes (k y_1 \oplus \ldots \oplus k y_n)$    there is  a Lie algebra homomorphism
     $$\psi : H \to X \otimes L$$
     given by 
     $$
\psi(h) =
 \sum_{0 \leq i_1, \ldots, i_n \leq p-1} ( i_1 ! \ldots i_n !)^{-1} x_1^{i_1} \ldots x_n^{i_n}  \otimes 
   \theta(y_1^{i_1} \ldots y_n^{i_n} \circ h)
$$
and
$$
\psi([y_i, h]) = [y_i, \psi(h)] \hbox{ for every } 1 \leq i \leq n.
$$
Observe that we can apply Theorem B because of condition 1 from the statement of Theorem \ref{Frank0}.
It remains to check that
$$
\psi([y_{i,j}, h]) = [y_{i,j}, \psi(h)] \hbox{ for every } 1 \leq j \not= i \leq n
$$
and
$$
\psi([y_{1,1} - y_{i,i}, h]) = [ y_{1,1} - y_{i,i}, \psi(h)] \hbox{ for every } 2 \leq  i \leq n.
$$
By the definition of $\psi$ we have
$$
\psi([y_{k,j}, h]) = \psi(y_{k,j} \circ h) =
 \sum_{0 \leq i_1, \ldots, i_n \leq p-1} ( i_1 ! \ldots i_n !)^{-1} x_1^{i_1} \ldots x_n^{i_n}  \otimes 
   \theta(y_1^{i_1} \ldots y_n^{i_n} y_{k,j} \circ h)
$$
and for $k \not= j$
$$
 [y_{k,j}, \psi(h)]  = 
 \sum_{0 \leq i_1, \ldots, i_n \leq p-1} ( i_1 ! \ldots i_n !)^{-1} y_{k,j}(x_1^{i_1} \ldots x_n^{i_n})  \otimes 
   \theta(y_1^{i_1} \ldots y_n^{i_n} \circ h) =
$$
$$
 \sum_{1 \leq i_j \leq p-1, 0 \leq i_1, \ldots, i_{j-1}, i_{j+1}, \ldots,  i_n \leq p-1} ( i_1 ! \ldots i_k! \ldots (i_j-1)! \ldots i_n !)^{-1} x_1^{i_1} \ldots x_k^{i_k+1} \ldots x_j^{i_j - 1} \ldots x_n^{i_n}  \otimes 
  $$ $$ \theta(y_1^{i_1} \ldots y_n^{i_n} \circ h) =
   $$
   $$
    \sum_{0 \leq i_1, \ldots ,{i_{k-1}}, i_{k+1},  \ldots, {i_{j-1}}, i_{j+1},  \ldots, i_n \leq p-1; 1 \leq i_k \leq p-1, 0 \leq i_j \leq p-2} ( i_1 ! \ldots (i_k-1)! \ldots (i_j)! \ldots i_n !)^{-1}$$ $$ x_1^{i_1} \ldots x_k^{i_k} \ldots x_j^{i_j} \ldots x_n^{i_n}  \otimes 
   \theta(y_1^{i_1} \ldots y_{k}^{i_k-1} \ldots y_j^{i_j+1} \ldots  y_n^{i_n} \circ h). 
   $$
Then conditions 2 and 3 from the statement of the theorem imply that
   $$\psi([y_{k,j}, h]) = [\psi(y_{k,j}), \psi( h)] \hbox{ for } k \not= j.
   $$
   
   Finally consider
   $$
  [y_{j,j}, \psi(h)]  = 
 \sum_{0 \leq i_1, \ldots, i_n \leq p-1} ( i_1 ! \ldots i_n !)^{-1} y_{j,j}(x_1^{i_1} \ldots x_n^{i_n})  \otimes 
   \theta(y_1^{i_1} \ldots y_n^{i_n} \circ h) =
$$ 
$$
 \sum_{0 \leq i_1, \ldots, i_n \leq p-1} ( i_1 ! \ldots i_k! \ldots i_j! \ldots i_n !)^{-1} i_j x_1^{i_1} \ldots  x_j^{i_j} \ldots x_n^{i_n}  \otimes 
   \theta(y_1^{i_1} \ldots y_n^{i_n} \circ h) 
   $$
   Hence for $j \not= 1$ we have
   $$[\psi(y_{j,j} - y_{1,1}), \psi(h)]  = [y_{j,j} - y_{1,1}, \psi(h)]  = $$ $$
    \sum_{0 \leq i_1, \ldots, i_n \leq p-1} ( i_1 ! \ldots i_j! \ldots i_n !)^{-1}(i_j - i_1) x_1^{i_1} \ldots  x_j^{i_j} \ldots x_n^{i_n}  \otimes 
   \theta(y_1^{i_1} \ldots y_n^{i_n} \circ h) $$
Then since
$$
\psi([y_{j,j} - y_{1,1},h]) =  \sum_{0 \leq i_1, \ldots, i_n \leq p-1} ( i_1 ! \ldots i_n !)^{-1} x_1^{i_1} \ldots x_n^{i_n}  \otimes 
   \theta(y_1^{i_1} \ldots y_n^{i_n} (y_{j,j} - y_{1,1}) \circ h), 
    $$
   we deduce that $\psi([y_{j,j} - y_{1,1},h]) = [\psi(y_{j,j} - y_{1,1}),\psi(h)] $ is equivalent to
   $$
    \theta(y_1^{i_1} \ldots y_n^{i_n} (y_{j,j} - y_{1,1}) \circ h)  = 
   (i_j - i_1) 
   \theta(y_1^{i_1} \ldots y_n^{i_n} \circ h).$$
   This is condition 4 from the statement of the theorem.
\end{proof}
     
		 There is an embedding of $sl_{n+1}( \mathbb{C})$ in ${\mathcal Der} ( {\mathbb C}
		[t_1, t_1^{-1}, \ldots, t_n, t^{-1}_n])$ by \cite{M}.
		
		 There is a more general version of embedding  of $sl_{n+1}(k)$ in a Lie algebra of derivations, 
when $char(k)$ does not divide $n+1$ i.e  $sl_{n+1}( k)$ embeds in ${\mathcal Der} X$, where $X = k[x_1, \ldots, x_n] / (x_1^p, \ldots, x_n^p)$ if $p > 0$ and $X =  k[x_1, \ldots, x_n] $ if $p = 0$.
 Let $\widetilde{L}$ be the $k$-span of 
\begin{equation} \label{basis} \{ y_{i,j} \ |  \ 1 \leq i \not= j \leq n+1 \} \cup \{ y_{i,i} - y_{i+1, i+1} \ | \ 1 \leq i \leq n \}, \end{equation}
where
$$
y_{i,j} = x_i \partial / \partial x_j \hbox{ for } 1 \leq i,j \leq n;$$ $$
y_{i, n+1} = x_i \sum_{1 \leq j \leq n} x_j \partial / \partial x_j \hbox{ for } 1 \leq i \leq n;$$ $$
y_{n+1, i} = \partial / \partial x_i \hbox{ for } 1 \leq i \leq n;$$ $$
y_{n+1, n+1} = - \sum_{1 \leq i \leq n} x_i \partial / \partial x_i$$ 
To keep in line with a previous notation we write $y_i$ for 	$y_{n+1, i} = \partial / \partial x_i $.	It is easy to check that   $\widetilde{L} \simeq sl_{n+1}( k)$ provided $char(k)$ does not divide $n+1$
.  In this case $\widetilde{L}$ is the $k$-span of 
$ \{ y_{i,j} \ |  \ 1 \leq i \not= j \leq n+1 \} \cup \{ y_{i,i}  \ | \ 1 \leq i \leq n \}$.

		  In the next result we consider a criterion for self-similar Lie algebra structure, where the derivation part is $\widetilde{L}$.
		  
		 \begin{theorem} \label{sl}
     Let $X = k[x_1, \ldots, x_n]/ (x_1^p, \ldots, x_n^p)$, where $k$ is a field of positive characteristic $p$ such that $p$ does not divide $n+1$.
     Let $L$ be a Lie algebra over $k$, $H$ an ideal of $L$ such that : 
     
      a)  the short exact sequence of Lie algebras $H \to L \to L/H$ splits, i.e. $$L = H  \leftthreetimes L_0, \hbox{ where }L_0 \simeq L/H;$$
      
         b) $L_0= \widetilde{L}$ be  as above.
         
         Let $$\theta : H \to L$$ be a Lie algebra homomorphism such that for every $h \in H$ :
         
         1.  $\theta(y_i^p \circ h) = 0$ for  $ 1 \leq i \leq n$;
         
         2.    $
   \theta(y_1^{i_1} \ldots y_n^{i_n} y_{k,j} \circ \ h) =  i_k 
   \theta(y_1^{i_1} \ldots y_{k}^{i_k-1} \ldots y_j^{i_j+1} \ldots  y_n^{i_n}  \circ \ h) 
   $ for
   $0 \leq i_1, i_{k-1}, i_{k+1},$ $  \ldots, i_{j-1}, i_{j+1},  \ldots, i_n \leq p-1; 1 \leq i_k \leq p-1, 0 \leq i_j \leq p-2$ and $k \not= j$;

   3.  $
   \theta(y_1^{i_1} \ldots y_n^{i_n} y_{k,j} \circ h) = 0
    $ for  $i_k =0$ or $i_j = p-1$  for $k \not= j$;
      
     4. $
    \theta(y_1^{i_1} \ldots y_n^{i_n} y_{j,j} \circ h)  = 
   i_j 
   \theta(y_1^{i_1} \ldots y_n^{i_n} \circ h)$ for $ 1 \leq j \leq n$;

     5. 
   $\theta(y_1^{i_1} \ldots y_n^{i_n} y_{j,n+1} \circ h) = \sum_{1 \leq k \leq n} i_k i_j \theta(y_1^{i_1} \ldots y_j^{i_j - 1} \ldots y_n^{i_n} \circ h)$ for $ i_j \geq 1$ and $\theta(y_1^{i_1} \ldots y_j^0 \ldots  y_n^{i_n} y_{j,n+1} \circ h) = 0$ for $ 1 \leq j \leq n$.

     Then
      there is a homomorphism of Lie algebras
      $$\psi :  L \to L \wr {\mathcal Der } X = (X \otimes L) \leftthreetimes {\mathcal Der}X
     $$
     such that 
     $\theta$ is the virtual endomorphism associated to $\psi$ with respect to the augmentation map $\epsilon : X \to k$ (i.e. $\epsilon(x_i) = 0$ for $ 1 \leq i \leq n$) and 
    the restriction of $\psi$ on $L_0$ is the inclusion of $L_0$ in ${\mathcal Der}X$.

     \end{theorem}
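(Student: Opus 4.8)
The strategy follows the proofs of Theorem~B and Theorem~\ref{Frank0}: build $\psi$ first on $H$ using the abelian part of $L_0$, then extend it to $L_0=\widetilde{L}$ by the inclusion $\widetilde{L}\hookrightarrow{\mathcal Der}X$, and finally check the remaining mixed brackets. Let $D_0\subseteq\widetilde{L}$ be the abelian subalgebra with $k$-basis $y_1,\dots,y_n$, where $y_i=y_{n+1,i}=\partial/\partial x_i$. Condition~1 is precisely the hypothesis needed to apply (the proof of) Theorem~B to $H\leftthreetimes D_0$, yielding a Lie algebra homomorphism
\[
\psi\colon H\longrightarrow X\otimes L,\qquad
\psi(h)=\sum_{0\le i_1,\dots,i_n\le p-1}(i_1!\cdots i_n!)^{-1}\,x_1^{i_1}\cdots x_n^{i_n}\otimes\theta(y_1^{i_1}\cdots y_n^{i_n}\circ h),
\]
which moreover satisfies $\psi([y_i,h])=[\psi(y_i),\psi(h)]$ for every $h\in H$ and $1\le i\le n$. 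I then extend $\psi$ to $L=H\oplus L_0$ by letting $\psi|_{L_0}$ be the inclusion $\widetilde{L}\hookrightarrow{\mathcal Der}X\subseteq L\wr{\mathcal Der}X$; as $\widetilde{L}$ is a Lie subalgebra of ${\mathcal Der}X$ this restriction is a homomorphism, and since $\epsilon(x_1^{i_1}\cdots x_n^{i_n})=0$ unless all $i_j=0$, one sees at once that $Ker(\pi\psi)=H$ and $(\epsilon\otimes id_L)\psi=\theta$. Hence, once $\psi$ is proved to be a homomorphism, it automatically restricts to the inclusion on $L_0$ and has $\theta$ as its associated virtual endomorphism.

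It remains to check that $\psi$ is a Lie algebra homomorphism. Since $\psi|_H$ and $\psi|_{L_0}$ are homomorphisms and both sides of $\psi([a,b])=[\psi(a),\psi(b)]$ are bilinear, it suffices to prove $\psi([z,h])=[\psi(z),\psi(h)]$ for $h\in H$ and $z$ running over the basis $(\ref{basis})$ of $\widetilde{L}$; the case $z=y_i$ is done, leaving $z=y_{k,j}$ with $1\le k\ne j\le n$, $z=y_{j,j}$ with $1\le j\le n$, and $z=y_{j,n+1}$ with $1\le j\le n$. In each case the bracket rule $[\delta,x\otimes a]=\delta(x)\otimes a$ of $L\wr{\mathcal Der}X$ gives
\[
[\psi(z),\psi(h)]=[z,\psi(h)]=\sum_{0\le i_1,\dots,i_n\le p-1}(i_1!\cdots i_n!)^{-1}\,z\bigl(x_1^{i_1}\cdots x_n^{i_n}\bigr)\otimes\theta(y_1^{i_1}\cdots y_n^{i_n}\circ h),
\]
while the formula for $\psi|_H$ together with associativity of the $U(L)$-action (whence $y_1^{i_1}\cdots y_n^{i_n}\circ(z\circ h)=(y_1^{i_1}\cdots y_n^{i_n}z)\circ h$) gives
\[
\psi([z,h])=\psi(z\circ h)=\sum_{0\le i_1,\dots,i_n\le p-1}(i_1!\cdots i_n!)^{-1}\,x_1^{i_1}\cdots x_n^{i_n}\otimes\theta(y_1^{i_1}\cdots y_n^{i_n}\,z\circ h).
\]
Applying $z$ to the monomials, reindexing the two sums over the same monomial basis of $X$, and comparing coefficients then turns the required identity into relations among the $\theta$-values. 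For $z=y_{k,j}$ ($k\ne j$) this is word for word the computation in the proof of Theorem~\ref{Frank0} and reduces to conditions~2 and~3; for $z=y_{j,j}$, using $y_{j,j}(x_1^{i_1}\cdots x_n^{i_n})=i_j\,x_1^{i_1}\cdots x_n^{i_n}$, it reduces to condition~4 (note that here $y_{j,j}$ is itself one of the basis vectors $(\ref{basis})$, so the relation is imposed for each $j$ separately rather than for the differences $y_{j,j}-y_{1,1}$ as in the Frank case).

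The main obstacle is the case $z=y_{j,n+1}=x_j\sum_{l=1}^n x_l\,\partial/\partial x_l$, the only generator of $\widetilde{L}$ acting on $X$ by a derivation of positive degree: since $y_{j,n+1}(x_1^{i_1}\cdots x_n^{i_n})=(i_1+\cdots+i_n)\,x_1^{i_1}\cdots x_j^{i_j+1}\cdots x_n^{i_n}$, this derivation both raises the $j$-th exponent by one and scales by the Euler eigenvalue. Matching monomials therefore calls for the substitution $i_j\mapsto i_j-1$, the identity $(i_1!\cdots(i_j-1)!\cdots i_n!)^{-1}=i_j\,(i_1!\cdots i_n!)^{-1}$, and the relation $x_j^p=0$ in $X$, which discards the term with $i_j=p-1$; carrying out this bookkeeping and separating off the monomials of degree $0$ in $x_j$ yields exactly condition~5 together with its boundary statement $\theta(y_1^{i_1}\cdots y_j^{0}\cdots y_n^{i_n}\,y_{j,n+1}\circ h)=0$. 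After these three families of identities are matched with conditions~2--5 the proof is finished. I remark that the hypothesis $p\nmid n+1$ enters only through the fact that $\widetilde{L}$ is then a faithful copy of $sl_{n+1}(k)$ inside ${\mathcal Der}X$; the construction of $\psi$ does not otherwise use it.
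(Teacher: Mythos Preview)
Your proof is correct and follows essentially the same route as the paper's. The only organizational difference is that the paper invokes Theorem~\ref{Frank0} for $H\leftthreetimes\widehat{L}$ to package the cases $z=y_{k,j}$ ($k\neq j$) and $z=y_{j,j}-y_{1,1}$ in one step, and then upgrades from the differences $y_{j,j}-y_{1,1}$ to the individual $y_{j,j}$ via condition~4, whereas you go directly from Theorem~B and treat the three families $y_{k,j}$, $y_{j,j}$, $y_{j,n+1}$ in turn (citing the Frank computation for the first). The content is identical.

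One small wording slip: $y_{j,j}$ is not literally a member of the basis~(\ref{basis}), which consists of the off-diagonal $y_{i,j}$ together with the \emph{differences} $y_{i,i}-y_{i+1,i+1}$. What you are actually using is the paper's remark that, since $p\nmid n+1$, the span $\widetilde{L}$ coincides with the span of $\{y_{i,j}\mid 1\le i\neq j\le n+1\}\cup\{y_{i,i}\mid 1\le i\le n\}$, so the $y_{j,j}$ do lie in $\widetilde{L}$ and checking the bracket identity on them suffices by bilinearity. This does not affect the correctness of your argument.
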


     \begin{proof}  
     Let $\widehat{L}$ be the Frank algebra i.e. $\widehat{L}$ is spanned as $k$-vector space by $\{ y_{i,j} \}_{1 \leq i \not= j \leq n} \cup \{ y_{i,i} - y_{1,1} \}_{2 \leq i \leq n} \cup \{ y_i \}_{1 \leq i \leq n}$. Then by Theorem \ref{Frank0} applied for the Lie algebra $H  \leftthreetimes  \widehat{L}$  there is  a Lie algebra homomorphism
     $$\psi : H \to X \otimes L$$
     defined by 
     $$
\psi(h) =
 \sum_{0 \leq i_1, \ldots, i_n \leq p-1} ( i_1 ! \ldots i_n !)^{-1} x_1^{i_1} \ldots x_n^{i_n}  \otimes 
   \theta(y_1^{i_1} \ldots y_n^{i_n} \circ h),
$$
$$
\psi([y_i, h]) = [y_i, \psi(h)] \hbox{ for every } 1 \leq i \leq n,
$$
$$
\psi([y_{i,j}, h]) = [y_{i,j}, \psi(h)] \hbox{ for } 1 \leq i \not= j \leq n
$$
and
\begin{equation} \label{frank-cond} 
 \psi([y_{j,j} - y_{1,1},h]) = [\psi(y_{j,j} - y_{1,1}),\psi(h)] 
\hbox{ for } 2 \leq j \leq n.
\end{equation}
Observe that we can apply Theorem \ref{Frank0} because conditions 1,2,3 and 4 from the statement of the theorem hold.
Note that for $ 1 \leq j \leq n$ by condition 4 from the statement of Theorem \ref{sl}
   $$ [\psi(y_{j,j}), \psi(h)] =
  [y_{j,j}, \psi(h)] =$$ $$
 \sum_{0 \leq i_1, \ldots, i_n \leq p-1} i_j ( i_1 !  \ldots i_n !)^{-1}  x_1^{i_1} \ldots  x_n^{i_n}  \otimes 
   \theta(y_1^{i_1} \ldots y_n^{i_n} \circ h) =
   $$
   $$
     \sum_{0 \leq i_1, \ldots, i_n \leq p-1} ( i_1 !  \ldots i_n !)^{-1}  x_1^{i_1} \ldots  x_n^{i_n}  \otimes 
   \theta(y_1^{i_1} \ldots y_n^{i_n} y_{j,j} \circ h) =  \psi([y_{j,j}, h]).$$
Finally it remains to check that $ 1 \leq j \leq n$
$$
[y_{j,n+1}, \psi(h)] = \psi([y_{j,n+1},h])$$
We calculate
$$
[y_{j,n+1}, \psi(h)] =
 \sum_{0 \leq i_1, \ldots, i_n \leq p-1} ( i_1 ! \ldots i_n !)^{-1} y_{j,n+1} ( x_1^{i_1} \ldots x_n^{i_n} ) \otimes 
   \theta(y_1^{i_1} \ldots y_n^{i_n} \circ h) = 
$$
$$
\sum_{1 \leq k \leq n}  \sum_{0 \leq i_1, \ldots, i_n \leq p-1} ( i_1 ! \ldots i_n !)^{-1} (x_j x_k \partial / \partial_k)( x_1^{i_1} \ldots x_n^{i_n}  )\otimes 
   \theta(y_1^{i_1} \ldots y_n^{i_n} \circ h) = $$
   $$
   \sum_{1 \leq k \leq n}  \sum_{0 \leq i_1, \ldots, i_{k}, \ldots,  i_n \leq p-1} i_k( i_1 ! \ldots i_n !)^{-1} ( x_1^{i_1}  \ldots x_j^{i_j + 1} \ldots  x_n^{i_n}  )\otimes 
   \theta(y_1^{i_1} \ldots y_n^{i_n} \circ h)  = $$
   and
   $$
   \psi([y_{j, n+1},h]) = 
 \sum_{0 \leq i_1, \ldots, i_n \leq p-1} ( i_1 ! \ldots i_n !)^{-1} ( x_1^{i_1} \ldots x_n^{i_n} ) \otimes 
   \theta(y_1^{i_1} \ldots y_n^{i_n} y_{j,n+1} \circ h).$$
Hence
$$[y_{j,n+1}, \psi(h)] = \psi([y_{j, n+1},h])$$
is equivalent to  condition 5 from the statement of the theorem.
\end{proof}

 \section{Constructing self-similarity from virtual endomorphism : the  case when the derivation part is Heisenberg Lie algebra ${\mathcal H}$ of dimension 3} \label{section-heisenberg}

The Heisenberg Lie algebra ${\mathcal H}$ of dimension 3 has a basis $y_1, y_2, y_3$ and satisfies the relation $[y_1, y_3] = y_2$.

\begin{theorem} \label{heisenberg} 

     Let $X = k[x_1, x_2]/I$, where $I = (x_1^p, x_2^p)$ if $char(k) = p >0$ and $I = 0$ if $char(k) = 0$.
   Let $L$ be a Lie algebra over a field $k$, $H$ an ideal of $L$ such that   $$L = H  \leftthreetimes L_0, \hbox{ where }L_0 \simeq {\mathcal H}.$$  
Let $$\theta : H \to L$$ be a Lie algebra homomorphism such that :

1.  there is $m  \in {\mathbb N}$ such that for every $b \in \{ y_1, y_2 \}$ and $h \in H$ we have $\theta(b^m \circ h) = 0$. If $char(k) = p >0$ we furthermore have that $m = p$. 
 
2.  $\theta(y_1 y_3 \circ h) = \theta(y_2 \circ h)$ and $\theta(y_3 \circ h) = 0$  for every $h \in H$.
    
     Then
      there is a homomorphism of Lie algebras
      $$\psi :  L \to L \wr {\mathcal Der } X = (X \otimes L) \leftthreetimes {\mathcal Der}X
     $$
     given by 
 \begin{equation} \label{nilp}
\psi(h) =
 \sum_{0 \leq i_1, i_2 \leq m_0 = m-1} ( i_1 ! i_2 !)^{-1} x_1^{i_1} x_2^{i_2}  \otimes 
   \theta(y_1^{i_1} y_2^{i_2} \circ h) \hbox{ for } h \in H,
\end{equation}    
 where     $\theta$ is the virtual endomorphism associated to $\psi$ with respect to the augmentation map $\epsilon : X \to k$ i.e. $\epsilon$ is the $k$-algebra homomorphism  defined by $\epsilon(x_i) = 0$ for  $1 \leq i \leq 2$ and $\psi(y_i) = \partial / \partial {x_i}$ for  $ 1 \leq i \leq 2$ and
$\psi(y_3) = x_1 \partial / \partial x_2$;  
 \end{theorem}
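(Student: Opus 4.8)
The plan is to construct $\psi$ in three stages, in the spirit of the proofs of Theorems \ref{Frank0} and \ref{sl}. First, note that $k y_1 \oplus k y_2$ is an abelian Lie subalgebra of $L_0 \cong \mathcal{H}$ of dimension $2$, since $y_2 = [y_1,y_3]$ is central in $\mathcal{H}$, so $[y_1,y_2]=0$. Hypothesis 1 (with $m=p$ when $\mathrm{char}(k)=p$) is exactly what is needed to run Theorem B for the pair $H$, $ky_1\oplus ky_2$ inside $L$; applying Theorem B to $H \leftthreetimes (k y_1 \oplus k y_2)$ produces a Lie algebra homomorphism $\psi\colon H \to X \otimes L$ given by formula (\ref{nilp}) with $m_0 = m-1$ (so $m_0 = p-1$ in positive characteristic, matching the top monomials $x_i^{p-1}$ of $X$), together with the relations $\psi([y_i,h]) = [\,\partial/\partial x_i,\,\psi(h)\,]$ for $i=1,2$ and every $h\in H$.

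Second, extend $\psi$ to $L_0\cong\mathcal{H}$ by setting $\psi(y_1)=\partial/\partial x_1$, $\psi(y_2)=\partial/\partial x_2$, $\psi(y_3)=x_1\,\partial/\partial x_2$, all as elements of $\mathcal{Der}X$; when $\mathrm{char}(k)=p$ the derivation $x_1\,\partial/\partial x_2$ does preserve the ideal $(x_1^p,x_2^p)$, as it sends $x_2^p$ to $p\,x_1 x_2^{p-1}=0$. One checks that $\psi|_{L_0}$ is a homomorphism $\mathcal{H}\to\mathcal{Der}X$ by verifying the single nontrivial Heisenberg relation $[\psi(y_1),\psi(y_3)]=\psi(y_2)$, i.e.\ $[\,\partial/\partial x_1,\,x_1\,\partial/\partial x_2\,]=\partial/\partial x_2$, a one-line computation; the brackets $[\psi(y_2),\psi(y_3)]$ and $[\psi(y_1),\psi(y_2)]$ vanish as they must. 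Since $L=H\oplus L_0$ as a vector space, stages one and two define a linear map $\psi$ on all of $L$, and by bilinearity it suffices to check the homomorphism property on pairs from a basis of $H$ together with $y_1,y_2,y_3$; this is already done for pairs inside $H$ (stage one), inside $L_0$ (stage two), and for pairs $(y_i,h)$ with $i=1,2$ (stage one).

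The remaining — and main — verification is $\psi([y_3,h])=[\psi(y_3),\psi(h)]$ for all $h\in H$. For the right-hand side, apply the derivation $x_1\,\partial/\partial x_2$ monomial-wise to (\ref{nilp}) and reindex. For the left-hand side, apply (\ref{nilp}) to $y_3\circ h\in H$ and rewrite, inside $U(L_0)$, the product $y_1^{i_1}y_2^{i_2}y_3 = y_3\,y_1^{i_1}y_2^{i_2} + i_1\,y_1^{i_1-1}y_2^{i_2+1}$ — valid because $y_2$ is central and $[y_1^{i_1},y_3]=i_1 y_1^{i_1-1}y_2$ — so that, after applying $\theta$ and using $\theta(y_3\circ H)=0$ from hypothesis 2, the coefficient of $x_1^{i_1}x_2^{i_2}$ becomes a scalar multiple of $\theta(y_1^{i_1-1}y_2^{i_2+1}\circ h)$. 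Tracking the factorial coefficients, the two sides then agree monomial by monomial on the interior range; the delicate point is the extremal monomials created on the right-hand side (those raising the $x_1$-degree by one while lowering the $x_2$-degree by one). In positive characteristic these vanish automatically since $x_1^p=x_2^p=0$ in $X$; in characteristic zero one checks instead that the relevant values $\theta(y_1^{m-1}y_2^{j}\circ h)$ are zero, which one deduces from hypotheses 1 and 2 together with the relation $[y_1,y_3]=y_2$, by trading $y_1^{m-1}y_2$ against $y_1^{m}y_3$ (annihilated by $\theta$ after hypothesis 1) plus brackets of the form $y_3\circ(\cdot)$ (annihilated by hypothesis 2). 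Arranging this boundary cancellation, together with the coefficient bookkeeping under the reindexing, is where essentially all the work of the proof lies.
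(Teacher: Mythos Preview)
Your proposal is correct and follows the same overall approach as the paper: reduce to Theorem~B on $H\leftthreetimes(ky_1\oplus ky_2)$, extend $\psi$ to $L_0\cong\mathcal{H}$ by the indicated derivations, and then verify $[\psi(y_3),\psi(h)]=\psi([y_3,h])$ by comparing coefficients. Your use of the enveloping-algebra identity $y_1^{i_1}y_2^{i_2}y_3=y_3\,y_1^{i_1}y_2^{i_2}+i_1\,y_1^{i_1-1}y_2^{i_2+1}$ in one stroke is a bit cleaner than the paper's induction on $i_1$, and your explicit treatment of the characteristic-zero boundary term $x_1^{m}$ (via $m\,y_1^{m-1}y_2=y_1^{m}y_3-y_3\,y_1^{m}$ and hypotheses 1--2) fills in a point the paper's computation passes over in silence. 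One small addition you should make: there is a second boundary, namely the $i_2=m_0$ terms on the \emph{left}-hand side $\psi(y_3\circ h)$; after your rewriting these carry coefficients $\theta(y_1^{i_1-1}y_2^{m}\circ h)$, which vanish directly by hypothesis~1, and this is what the paper records as condition~(\ref{nilpo4}).
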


\begin{proof} Since the subalgebra $L_1$ of $L_0$ generated by $y_1, y_2$ is abelian we can apply Theorem B for $L_2 = H  \leftthreetimes L_1$ and deduce that there is homomorphism of Lie algebras
$$
\psi : L_2 \to L_2 \wr {\mathcal Der} X
$$
given by (\ref{nilp}). Note that we can apply Theorem B since condition 1 from the statement of the theorem  holds.

Since $L = L_2 \leftthreetimes k y_3$ and $\psi(y_3) = x_1 \partial / \partial x_2$  it remains to show that
$[\psi(y_3), \psi(h)] = \psi([y_3, h])$. As before we denote by $a \circ b$ the commutator $[a,b]$. Note that by (\ref{nilp})
$$
 \psi([y_3, h]) = \psi(y_3 \circ h) =
 \sum_{0 \leq i_1, i_2 \leq m_0 = m-1} ( i_1 ! i_2 !)^{-1} x_1^{i_1} x_2^{i_2}  \otimes 
   \theta(y_1^{i_1} y_2^{i_2} y_3 \circ h).
$$
On other hand for
$$
[\psi(y_3), \psi(h)] = [x_1 \partial / \partial x_2, 
 \sum_{0 \leq i_1, i_2 \leq m_0} ( i_1 ! i_2 !)^{-1} x_1^{i_1} x_2^{i_2}  \otimes 
   \theta(y_1^{i_1} y_2^{i_2} \circ h) ] =$$ $$
 \sum_{0 \leq i_1, i_2 \leq m_0} ( i_1 ! i_2 !)^{-1}  (x_1 \partial / \partial x_2) (x_1^{i_1} x_2^{i_2})  \otimes 
   \theta(y_1^{i_1} y_2^{i_2} \circ h) =$$ $$ \sum_{0 \leq i_1, i_2 \leq m_0} ( i_1 ! i_2 !)^{-1} i_2 x_1^{i_1+1} x_2^{i_2-1}  \otimes 
   \theta(y_1^{i_1} y_2^{i_2} \circ h)  = 
$$
$$
\sum_{1 \leq j_1 \leq m_0,0 \leq j_2 \leq m_0-1} ( (j_1-1) ! j_2 !)^{-1}  x_1^{j_1} x_2^{j_2}  \otimes 
   \theta(y_1^{j_1-1} y_2^{j_2+1} \circ h). $$
Thus $[\psi(y_3), \psi(h)] = \psi([y_3, h]) $   if and only if
\begin{equation} \label{nilpo2}
  \theta(y_1^{i_1} y_2^{i_2} y_3 \circ h) = i_1
   \theta(y_1^{i_1-1} y_2^{i_2+1} \circ h) \hbox{ for } i_1 \geq 1
\end{equation} 
\begin{equation} \label{nilpo3}
\theta(y_2^{i_2} y_3 \circ h) = 0 \hbox{ for } i_2 \geq 0.
\end{equation}
and
\begin{equation} \label{nilpo4}
\theta( y_1^{i_1}  y_2^{m-1} y_3 \circ h) = 0 \hbox{ for } i_1 \geq 0.
\end{equation}
Observe that since $y_2$ is a central element in ${\mathcal H}$ (\ref{nilpo2}) is equivalent to (\ref{nilpo2}) for $i_2 = 0$ and (\ref{nilpo3}) is equivalent to (\ref{nilpo3}) for $i_2 = 0$. We will show that for  (\ref{nilpo2}) to hold for any $i_1 \geq 1, i_2 = 0$ it suffices it holds for $i_1 = 1, i_2 = 0$ and (\ref{nilpo3}) holds for $i_2 = 0$.
To show this suppose that (\ref{nilpo2}) holds for $i_1 - 1, i_2 = 0$ and we will show that the same holds for $i_1$. Then for $i_1 \geq 2$ we have
$$
 \theta(y_1^{i_1} y_3 \circ h) =  \theta(y_1^{i_1-1} (y_1 y_3 - y_3 y_1) \circ h) + \theta(y_1^{i_1-1}y_3 \circ (y_1 \circ h))  = 
$$
$$ \theta(y_1^{i_1-1} y_2 \circ h) + (i_1 - 1)  \theta(y_1^{i_1-2}y_2 \circ (y_1 \circ h)) =
$$
$$
\theta(y_1^{i_1-1} y_2 \circ h) + (i_1 - 1)  \theta(y_1^{i_1-1}y_2 \circ h) =
i_1 
\theta(y_1^{i_1-1} y_2 \circ h),
$$
as required.

Observe that for  $ i_1 \geq 1$ (\ref{nilpo4}) follows from (\ref{nilpo2}) and condition 1 from the statement of the theorem. Indeed
$$
\theta( y_1^{i_1}  y_2^{m-1} y_3 \circ h) = \theta( y_1^{i_1}   y_3 \circ ( y_2^{m-1} \circ h)) = i_1 \theta( y_1^{i_1-1}   y_2 \circ ( y_2^{m-1} \circ h)) = $$ $$i_1 \theta( y_1^{i_1-1}  y_2^{m} \circ h) = i_1  \theta(   y_2^{m} \circ ( y_1^{i_1-1} \circ h)) = 0.
$$
Finally  since $\theta (y_3 \circ H) = 0$ we get
$$
\theta( y_2^{m-1} y_3 \circ h) = \theta( y_3 \circ (y_2^{m-1} \circ h)) = 0.
$$ 
Thus (\ref{nilpo4}) is  a redundant relation i.e. follows from (\ref{nilpo2}), (\ref{nilpo3}) and condition 1.
\end{proof}

\begin{theorem} \label{heisenberg2}
Let $X = k[x_1, x_2, x_4]/I$,  where $I = (x_1^p, x_2^p, x_4^p)$ if $char(k) = p >0$ and $I = 0$ if $char(k) = 0$.
    Let $L$ be a Lie algebra over $k$, $H$ an ideal of $L$ such that   $$L = H  \leftthreetimes L_0, \hbox{ where }L_0 \simeq {\mathcal H} \oplus k y_4.$$  
Let $$\theta : H \to L$$ be a Lie algebra homomorphism such that :

1.   there is $m  \in {\mathbb N}$ such that for every $b \in \{ y_1, y_2, y_4 \}$ and $h \in H$ we have $\theta(b^m \circ h) = 0$. If $char(k) = p >0$ we furthermore have that $m = p$. 
 
2.  $\theta(y_1 y_3 \circ h) = \theta(y_2 \circ h)$ and $\theta(y_3 \circ h) = 0$ for every $h \in H$.
    
     Then
      there is a homomorphism of Lie algebras
      $$\psi :  L \to L \wr {\mathcal Der } X = (X \otimes L) \leftthreetimes {\mathcal Der}X
     $$
     given by 
 \begin{equation} \label{nilp12}
\psi(h) =
 \sum_{0 \leq i_1, i_2, i_4 \leq m_0 = m-1} ( i_1 ! i_2 ! i_4!)^{-1} x_1^{i_1} x_2^{i_2} x_4^{i_4} \otimes 
   \theta(y_1^{i_1} y_2^{i_2} y_4^{i_4} \circ h),
\end{equation}    
 where $\theta$ is the virtual endomorphism associated to $\psi$ with respect to the augmentation map $\epsilon : X \to k$ i.e.  $\epsilon(x_i) = 0$ for  $i \in \{ 1,2, 4 \}$ and
$\psi(y_i) = \partial / \partial {x_i}$ for $i \in \{ 1,2, 4 \}$ and
$\psi(y_3) = x_1 \partial / \partial x_2$;  
 \end{theorem}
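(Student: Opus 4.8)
The plan is to follow the proof of Theorem \ref{heisenberg} essentially verbatim, adjoining the extra commuting variable $x_4$ and treating $y_4$ as an inert central element. Since $L_0\simeq{\mathcal H}\oplus ky_4$, the only nontrivial bracket among $y_1,y_2,y_3,y_4$ is $[y_1,y_3]=y_2$, so $y_2$ and $y_4$ are central in $L_0$ and the $k$-span $L_1$ of $\{y_1,y_2,y_4\}$ is an abelian subalgebra of dimension $3$. First I would apply Theorem B to the split extension $L_2=H\leftthreetimes L_1$ with $X=k[x_1,x_2,x_4]/I$; condition 1 supplies the vanishing $\theta(y_i^m\circ h)=0$ on the basis $y_1,y_2,y_4$ (with $m=p$ in characteristic $p$), which is what the proof of Theorem B needs. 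This produces a Lie algebra homomorphism $\psi\colon L_2\to L_2\wr{\mathcal Der}X$ given by the formula (\ref{nilp12}), with $\psi(y_i)=\partial/\partial x_i$ for $i\in\{1,2,4\}$, with $\theta$ its associated virtual endomorphism for $\epsilon(x_i)=0$, and with $\psi([y_i,h])=[\psi(y_i),\psi(h)]$ for $i\in\{1,2,4\}$ and $h\in H$.

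Since $L=L_2\leftthreetimes ky_3$, I would then set $\psi(y_3)=x_1\partial/\partial x_2\in{\mathcal Der}X$ and check the resulting linear map on $L$ is a Lie algebra homomorphism. The identities $[\psi(y_3),\psi(y_i)]=\psi([y_3,y_i])$ for $i\in\{1,2,4\}$ reduce to the one-line computation $[x_1\partial/\partial x_2,\partial/\partial x_1]=-\partial/\partial x_2$ in ${\mathcal Der}X$ (the brackets with $\partial/\partial x_2$ and $\partial/\partial x_4$ vanishing), matching $[y_3,y_1]=-y_2$, $[y_3,y_2]=[y_3,y_4]=0$. The substantive point is $[\psi(y_3),\psi(h)]=\psi([y_3,h])$ for $h\in H$: expanding both sides via (\ref{nilp12}) (the right-hand side makes sense since $y_3\circ h\in H$), applying the derivation $x_1\partial/\partial x_2$ to the monomials $x_1^{i_1}x_2^{i_2}x_4^{i_4}$, reindexing, and comparing coefficients, this identity becomes the relations
\[
\theta(y_1^{i_1}y_2^{i_2}y_4^{i_4}y_3\circ h)=i_1\,\theta(y_1^{i_1-1}y_2^{i_2+1}y_4^{i_4}\circ h)\ (i_1\ge1),\qquad \theta(y_2^{i_2}y_4^{i_4}y_3\circ h)=0,
\]
\[
\theta(y_1^{i_1}y_2^{m-1}y_4^{i_4}y_3\circ h)=0\ (i_1\ge0),
\]
for all admissible $i_1,i_2,i_4$ and all $h\in H$, together with, only in characteristic $0$ (where $X$ is not truncated), the relation $\theta(y_1^{m-1}y_2^{i_2}y_4^{i_4}\circ h)=0$ for $i_2\ge1$ coming from the $x_1^{m}$ terms.

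It remains to verify these relations from conditions 1 and 2. Because $y_2$ and $y_4$ commute with $y_1$ and $y_3$ inside $U(L)$, each relation reduces to its $i_2=i_4=0$ instance by letting $y_2^{i_2}y_4^{i_4}$ act on $h$ first: the second family then reads $\theta(y_3\circ h)=0$, which is part of condition 2; the first family reads $\theta(y_1^{i_1}y_3\circ h)=i_1\theta(y_1^{i_1-1}y_2\circ h)$, which I would prove by induction on $i_1$, with base case $\theta(y_1y_3\circ h)=\theta(y_2\circ h)$ (the other part of condition 2) and inductive step using $y_1y_3-y_3y_1=[y_1,y_3]=y_2$ together with centrality of $y_2$; the third family, for $i_1\ge1$, follows by applying the (already established) first family to $y_2^{m-1}y_4^{i_4}\circ h\in H$ and then invoking $\theta(y_2^m\circ w)=0$ from condition 1, and for $i_1=0$ it is the second family; and the characteristic-$0$ relation follows by applying the first family with $i_1=m$, turning its left side into $\theta(y_1^{m}\circ w)=0$ with $w=y_2^{i_2-1}y_4^{i_4}y_3\circ h\in H$, again by condition 1. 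I do not expect a real obstacle here: the whole argument is a rerun of Theorem \ref{heisenberg} with $y_4$ along for the ride. The points needing care are the bookkeeping at the edges of the index ranges — the $x_1^{m}$ terms (automatically zero in characteristic $p$ because $x_1^p=0$, but genuine in characteristic $0$) and the $x_2^{m-1}$ terms that must vanish — and checking that Theorem B applies with only the basis form of condition 1.
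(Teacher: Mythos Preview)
Your proposal is correct and follows essentially the same route as the paper: apply Theorem~B to $L_2=H\leftthreetimes L_1$ with $L_1=\langle y_1,y_2,y_4\rangle$ abelian, then check the single extra bracket $[\psi(y_3),\psi(h)]=\psi([y_3,h])$ by reducing (via centrality of $y_2,y_4$) to the relations already handled in the proof of Theorem~\ref{heisenberg} with $h$ replaced by $y_4^{i_4}\circ h$. You are in fact more careful than the paper's write-up: you explicitly verify the $L_0$-internal brackets $[\psi(y_3),\psi(y_i)]$, and you isolate and dispatch the extra $x_1^{m}$ coefficient that arises in characteristic~$0$ (where $X$ is not truncated), a boundary case the paper's reindexing in Theorem~\ref{heisenberg} silently drops.
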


\begin{proof} Since the subalgebra $L_1$ of $L_0$ generated by $y_1, y_2, y_4$ is abelian we can apply Theorem B for $L_2 = H  \leftthreetimes L_1$ and deduce that there is homomorphism of Lie algebras
$$
\psi : L_2 \to L_2 \wr {\mathcal Der} X
$$
given by (\ref{nilp12}).
Since $L = L_2 \leftthreetimes k y_3$ and $\psi(y_3) = x_1 \partial / \partial x_2$ is defined it remain to show that
$[\psi(y_3), \psi(h)] = \psi([y_3, h])$. As in the proof of Theorem \ref{heisenberg} just adding $x_4$ and $y_4$ in the calculations we get that
 $\psi : L \to L \wr {\mathcal H} \subset L \wr {\mathcal Der}X$ is a homomorphism if and only if for $h \in H$ we have 
\begin{equation} \label{nilpo21}
  \theta(y_1^{i_1} y_2^{i_2} y_4^{i_4} y_3  \circ h) = i_1
   \theta(y_1^{i_1-1} y_2^{i_2+1} y_4^{i_4} \circ h) \hbox{ for } i_1 \geq 1, i_2, i_4 \geq 0 
\end{equation} 
\begin{equation} \label{nilpo31}
\theta(y_2^{i_2} y_4^{i_4} y_3 \circ h) = 0 \hbox{ for } i_2, i_4 \geq 0.
\end{equation}
and
\begin{equation} \label{nilpo41}
\theta( y_1^{i_1}  y_2^{m-1}  y_4^{i_4} y_3 \circ h) = 0 \hbox{ for } i_1, i_4 \geq 0.
\end{equation}
Since $y_4$ is a central element in $L_0$ the above equations are equivalent to
\begin{equation} 
  \theta(y_1^{i_1} y_2^{i_2} y_3  \circ h_0) = i_1
   \theta(y_1^{i_1-1} y_2^{i_2+1} \circ h_0) \hbox{ for } i_1 \geq 1, i_2\geq 0 
\end{equation} and
\begin{equation} 
\theta(y_2^{i_2} y_3 \circ h_0) = 0 \hbox{ for } i_2\geq 0.
\end{equation}
for $h_0 = y_4^{i_4} \circ h \in H$, which are precisely the conditions (\ref{nilpo2}) and (\ref{nilpo3}) considered before.
\end{proof}

\section{Self-similar structure of $sl_{n+1}(k)$} \label{embed-new}

		 Recall that  if $char(k)$ does not divide $n+1$,  $sl_{n+1}( k)$ embeds in ${\mathcal Der} X$, where $X = k[x_1, \ldots, x_n] / (x_1^p, \ldots, x_n^p)$ if $p > 0$  and $X = k[x_1, \ldots, x_n]$ if $p = 0$. 
  The $k$-span of 
$\{ y_{i,j} \ |  \ 1 \leq i \not= j \leq n+1 \} \cup \{ y_{i,i} - y_{i+1, i+1} \ | \ 1 \leq i \leq n \}$ is a copy of $sl_{n+1}(k)$ and coincides with the $k$-span of 
$\{ y_{i,j} \ |  \ 1 \leq i \not= j \leq n+1 \} \cup \{ y_{i,i} \ | \ 1 \leq i \leq n \}$ . 
In ``matrix'' notation  i.e. $E_{a,b} E_{c,d} = \delta_{b,c} E_{a,d}$ we have :  
$$
E_{i,j} = - x_j \partial / \partial x_i \hbox{ for } 1 \leq i,j \leq n;$$ $$
E_{n+1,i} = - x_i \sum_{1 \leq j \leq n} x_j \partial / \partial x_j \hbox{ for } 1 \leq i \leq n;$$ $$
E_{i,n+1} = \partial / \partial x_i \hbox{ for } 1 \leq i \leq n \hbox{ and }
E_{n+1, n+1} =  \sum_{1 \leq i \leq n} x_i \partial / \partial x_i.$$

We observe in the following lemma  that there is an obvious  self-similar structure on $s(n+1,k)$, which is a diagonal embedding.

\begin{lemma}Let $k$ be a field such that $char(k)$ does not divide $n+1$. Then 
there is a Lie algebra monomorphism
$$\psi : sl_{n+1}(k) \to (X \otimes sl_{n+1}( k)) \leftthreetimes {\mathcal Der}(X),$$
where $X = k[x_1, \ldots, x_n]$ if $char(k) = 0$ and $X = k[x_1, \ldots, x_n]/ (x_1^p, \ldots, x_n^p)$ if $char(k) = p > 0$. Furthermore $sl_{n+1}(k)$ is a faithful $\psi$-self-similar Lie algebra. 
\end{lemma}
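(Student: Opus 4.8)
The plan is to take for $\psi$ the \emph{diagonal} embedding. First I identify $sl_{n+1}(k)$ with the copy $\widetilde{L}\subseteq {\mathcal Der}X$ spelled out just before the lemma (this identification exists exactly because $char(k)$ does not divide $n+1$), and define
\[
\psi(a) = 1\otimes a + a\qquad(a\in\widetilde{L}),
\]
where the summand $1\otimes a$ lies in $X\otimes\widetilde{L}\cong X\otimes sl_{n+1}(k)$ and the second copy of $a$ is read as an element of ${\mathcal Der}X$. On the standard generators this is $\psi(E_{i,n+1}) = 1\otimes E_{i,n+1} + \partial/\partial x_i$, $\psi(E_{i,j}) = 1\otimes E_{i,j} - x_j\partial/\partial x_i$, and so on.

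The first step is to check that $\psi$ is a homomorphism of Lie algebras. Using the bracket of $(X\otimes L)\leftthreetimes {\mathcal Der}X$ recalled in Section~\ref{section-structure}, for $a,b\in\widetilde{L}$ I expand
\[
[\psi(a),\psi(b)] = [1\otimes a,1\otimes b] + [1\otimes a,b] + [a,1\otimes b] + [a,b].
\]
The two mixed terms vanish because $[\delta,1\otimes c] = \delta(1)\otimes c = 0$ for any derivation $\delta$; the first term equals $1\otimes[a,b]$; and the last term is the bracket of $a,b$ inside ${\mathcal Der}X$, which agrees with $[a,b]$ in $\widetilde{L}\cong sl_{n+1}(k)$. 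Hence $[\psi(a),\psi(b)] = 1\otimes[a,b] + [a,b] = \psi([a,b])$, and $\psi$ is manifestly $k$-linear, so it is a Lie algebra homomorphism.

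For injectivity, if $\psi(a)=0$ then its $X\otimes L$-component $1\otimes a$ vanishes, forcing $a=0$; so $\psi$ is a monomorphism. It is also proper: for $a\neq 0$ the component $1\otimes a$ is nonzero, so $\psi(\widetilde L)\not\subseteq{\mathcal Der}X$, while the derivation component of $\psi(a)$ is $a\neq 0$ (the embedding $\widetilde L\hookrightarrow{\mathcal Der}X$ being faithful), so $\psi(\widetilde L)$ is not contained in any Lie subalgebra of $X\otimes L$. For faithfulness I use that, by the description of the action in Section~\ref{section-structure}, $a\in\widetilde L$ acts on the first level $X^{\otimes 1}=X$ through its derivation component, which here is $a$ itself; hence $Ker(\widetilde L\to End_k(X))$ is the kernel of the embedding $\widetilde L\hookrightarrow{\mathcal Der}X$, namely $0$. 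Since $\cap_{m\geq 1}Ker(\widetilde L\to End_k(X^{\otimes m}))\subseteq Ker(\widetilde L\to End_k(X))=0$, the map $\nu$ is injective, so $sl_{n+1}(k)$ is faithful $\psi$-self-similar.

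I do not expect a real obstacle: unlike the map of Theorem~C, where $\psi(E_{i,n+1})=\partial/\partial x_i$ has no $X\otimes L$-part and the homomorphism property imposes genuine relations on the structure constants, the diagonal $\psi$ is a homomorphism, proper and faithful almost by inspection. The only points needing care are the sign and module-action conventions in the semidirect-product bracket, and the standing hypothesis $char(k)\nmid n+1$, which is precisely what makes $\widetilde L$ a genuine copy of $sl_{n+1}(k)$ rather than a proper quotient.
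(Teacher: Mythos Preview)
Your proof is correct and follows essentially the same approach as the paper: define $\psi(a)=1\otimes a+a$ as the diagonal embedding of $\widetilde{L}$ into $(1\otimes\widetilde{L})\leftthreetimes\widetilde{L}\simeq\widetilde{L}\oplus\widetilde{L}$, and deduce faithfulness from the fact that the action on the first level $X$ is through the injective derivation component. The only cosmetic difference is that the paper reads off injectivity from $\pi\psi$ rather than from the $1\otimes a$ component, and your write-up is more explicit about the homomorphism check and properness than the paper's terse one-line justification.
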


\begin{proof}
We identify $sl_{n+1}(k)$ with the subalgebra $\widetilde{L}$ of ${\mathcal Der}(X)$. Define $$ \psi(a) = 1 \otimes a + a \in (X \otimes \widetilde{L}) \leftthreetimes {\mathcal  Der}(X)$$
Note that this is a diagonal embedding, since $(1 \otimes \widetilde{L}) \leftthreetimes \widetilde{L} \simeq \widetilde{L} \oplus \widetilde{L}$.

Since $ \pi \psi : sl_{n+1}(k) \to {\mathcal Der}(X)$ is a monomorphism, $\psi$ is a monomorphism and $sl_{n+1}(k)$ acts faithfully on $X$, hence acts faithfully on $\cup_{m \geq 1} X^{\otimes m}$.
\end{proof}

Theorem C gives a more interesting  self-similar structure of $sl_{n+1}(k)$ which is not a diagonal embedding and is not the obvious embedding inside ${\mathcal Der}(X)$ or $1 \otimes sl_{n+1}(k)$. It is inspired  by the examples of \cite{Bartholdi}, where always one of the generators goes to $\partial / \partial x$.

\medskip		  
		  {\bf Proof of Theorem C}

We have to check that $\psi$ is a homomorphism of Lie algebras.  Since $ \pi \psi : sl_{n+1}(k) \to {\mathcal Der}(X)$ is a monomorphism, $\psi$ is a monomorphism.  And again since $\pi \psi$ is monomorphism $sl_{n+1}(k)$ acts faithfully on $X$, hence acts faithfully on $\cup_{m \geq 1} X^{\otimes m}$.	  
		  
1. 		  Note that the $k$-span $L_1$ of $\{ E_{i,j} | 1 \leq i,j \leq n \}$ is a Lie subalgebra of ${\mathcal Der} X$ and $\psi$ is defined on it as the ``diagonal'' map $L_1 \to (1 \otimes L_1 ) \oplus L_1$, hence the restriction of $\psi$ on $L_1$ is a homomorphism.

	2. Note that $[E_{i,n+1}, E_{j,n+1}] = 0$ for $ 1 \leq i,j \leq n$ and
	$$
	[\psi(E_{i,n+1}), \psi(E_{j, n+1})] = [\partial / \partial x_i, \partial / \partial x_j] = 0 = \psi(	  [E_{i,n+1}, E_{j,n+1}]).$$
	
	3. For $1 \leq a,b \leq n$ and $ 1 \leq i \leq n$ we have
	$$
	[\psi(E_{a,b}) , \psi(E_{i, n+1})] = [ 1 \otimes E_{a,b} + E_{a,b}, E_{i, n+1}] = - (\partial / \partial x_i)(1) \otimes E_{a,b} + [ E_{a,b}, E_{i, n+1}] = $$ $$ [E_{a,b}, E_{i, n+1}] \in {\mathcal Der}(X).
	$$
	On other hand $ [E_{a,b}, E_{i, n+1}] = \delta_{b,i} E_{a, n+1}
		 $ where $\delta_{b,i}$ is the Kroneker symbol, hence
		 $$\psi([E_{a,b}, E_{i, n+1}]) = \delta_{b,i} \psi(E_{a, n+1})  = \delta_{b,i} E_{a, n+1} = [E_{a,b}, E_{i, n+1}] = [\psi(E_{a,b}) , \psi(E_{i, n+1})] .$$

		  		  4. Note that $[E_{n+1,i}, E_{j,n+1}] = \delta_{i,j} E_{n+1, n+1} - E_{j,i}$ where $ 1 \leq i,j \leq n$. 
		  		  And hence
		  $$\psi([E_{n+1,i}, E_{j,n+1}]) = \psi(\delta_{i,j} E_{n+1, n+1} - E_{j,i}) = $$ $$1 \otimes (\delta_{i,j} E_{n+1, n+1} - E_{j,i}) + (\delta_{i,j} E_{n+1, n+1} - E_{j,i}).$$
		  		  On other hand
		  $$[\psi(E_{n+1,i}), \psi(E_{j,n+1})] =  [\sum_{1 \leq s \leq n} x_s \otimes b_{i,s} + 1 \otimes E_{n+1,i} + E_{n+1,i}, \partial/ \partial x_j] =$$ $$ [\sum_{1 \leq s \leq n} x_s \otimes b_{i,s}, \partial/ \partial x_j] + [1 \otimes E_{n+1,i}, \partial/ \partial x_j] + [E_{n+1,i}, \partial/ \partial x_j] = $$ 
		  $$- \sum_{1 \leq s \leq n} (\partial/ \partial x_j)(x_s) \otimes b_{i,s} + 0 + [E_{n+1,i},E_{j, n+1}] = - 1 \otimes b_{i,j} + \delta_{i,j} E_{n+1, n+1} - E_{j,i}.$$
		  Hence 
		  $ [\psi(E_{n+1,i}), \psi(E_{j,n+1})] = \psi([E_{n+1,i}, E_{j,n+1}]) $  is equivalent to
		   $$b_{i,j} = E_{j,i} - \delta_{i,j} E_{n+1, n+1}.$$
		   
		  5. Note that for $1 \leq i\not= j \leq n$ we have that $[E_{n+1,i}, E_{n+1,j}] = 0$. On other hand
		  \begin{equation} \label{*}
		  [\psi(E_{n+1,i}), \psi(E_{n+1,j})] =\end{equation} $$ [\sum_{1 \leq s \leq n} x_s \otimes b_{i,s} + 1 \otimes E_{n+1, i} + E_{n+1,i},\sum_{1 \leq t \leq n} x_t \otimes b_{j,t} + 1 \otimes E_{n+1, j} + E_{n+1,j}] = 
		  $$
		  $$
		  [\sum_{1 \leq s \leq n} x_s \otimes b_{i,s},\sum_{1 \leq t \leq n} x_t \otimes b_{j,t}] + [ 1 \otimes E_{n+1, i} + E_{n+1,i}, 1 \otimes E_{n+1, j} + E_{n+1,j}]  +$$ $$ [\sum_{1 \leq s \leq n} x_s \otimes b_{i,s}, 1 \otimes E_{n+1, j} + E_{n+1,j}] + [1 \otimes E_{n+1, i} + E_{n+1,i},\sum_{1 \leq t \leq n} x_t \otimes b_{j,t}] =$$
		  $$\sum_{1 \leq s,t \leq n} x_s x_t \otimes [b_{i,s}, b_{j,t}] + 0
		  + \sum_{1 \leq s \leq n} x_s \otimes [b_{i,s}, E_{n+1,j}] - \sum_{1 \leq s \leq n} E_{n+1,j} (x_s) \otimes b_{i,s} +$$ $$ \sum_{1 \leq t \leq n} x_t \otimes [E_{n+1,i}, b_{j,t}] + \sum_{1 \leq t \leq n} E_{n+1,i}(x_t) \otimes b_{j,t}.
		  $$ Note that
		$$
		[b_{i,s}, b_{j,t}] = [E_{s,i} - \delta_{s,i} E_{n+1, n+1}, E_{t,j} - \delta_{t,j} E_{n+1, n+1}] =
		 [E_{s,i}, E_{t,j}] =
		$$
		$$
		\delta_{i,t}  E_{s,j} - \delta_{j,s} E_{t,i}
		  $$
		  and
		  $$[b_{i,s}, E_{n+1,j}] = [E_{s,i} - \delta_{s,i} E_{n+1, n+1}, E_{n+1,j}] = [E_{s,i}, E_{n+1,j}] + [ - \delta_{s,i} E_{n+1, n+1}, E_{n+1,j}]=
		  $$
		  $$-  \delta_{s,j} E_{n+1,i}  - \delta_{s,i} E_{n+1,j}. 
		  $$
		  Similarly
		  $$ [E_{n+1,i}, b_{j,t}]  = - [b_{j,t}, E_{n+1,i}] = \delta_{t,i} E_{n+1,j}  + \delta_{t,j} E_{n+1,i}. 
		  $$
		  Then \begin{equation} \label{chuva1} \sum_{1 \leq s \leq n} x_s \otimes [b_{i,s}, E_{n+1,j}] = \end{equation} $$\sum_{1 \leq s \leq n} x_s \otimes ( - \delta_{s,j} E_{n+1,i} - \delta_{s,i} E_{n+1,j}) =  - x_j \otimes E_{n+1,i} - x_i \otimes E_{n+1,j},$$
		  \begin{equation} \label{chuva2}  \sum_{1 \leq t \leq n} x_t \otimes [E_{n+1,i}, b_{j,t}] = \end{equation} $$ \sum_{1 \leq t \leq n} x_t \otimes ( \delta_{t,i} E_{n+1,j}  + \delta_{t,j} E_{n+1,i}) = x_j \otimes  E_{n+1,i} + x_i \otimes  E_{n+1,j},$$
		  \begin{equation} \label{chuva3} \sum_{1 \leq s,t \leq n} x_s x_t \otimes [b_{i,s}, b_{j,t}]  =
		  \sum_{1 \leq s,t \leq n} x_s x_t \otimes (\delta_{i,t}  E_{s,j} - \delta_{j,s} E_{t,i})=   \end{equation}
		  $$
		  x_j x_i  \otimes ( E_{j,j} - E_{i,i}) +\sum_{1 \leq s \not= j \leq n}  x_s x_i \otimes E_{s,j} + \sum_{1 \leq t \not=i \leq n} x_j x_t \otimes (- E_{t,i}).$$ Since 
		 $E_{n+1,j} (x_s) = - x_j x_s$	  
		  \begin{equation} \label{chuva4} \sum_{1 \leq s \leq n} E_{n+1,j} (x_s) \otimes b_{i,s} = - \sum_{1 \leq s \leq n} ( x_j x_s) \otimes b_{i,s} = \end{equation} $$ - \sum_{1 \leq s \leq n} ( x_j x_s) \otimes (E_{s,i} - \delta_{s,i} E_{n+1, n+1}) = x_j x_i \otimes ( E_{n+1, n+1} - E_{i,i})  - \sum_{1 \leq s \not= i  \leq n} ( x_j x_s) \otimes E_{s,i}
		  $$ and similarly
	  \begin{equation} \label{chuva5}
		  \sum_{1 \leq t \leq n} E_{n+1,i}(x_t) \otimes b_{j,t} = x_i x_j \otimes (E_{n+1, n+1}- E_{j,j}) -  \sum_{1 \leq t \not= j \leq n} ( x_ix_t) \otimes E_{t,j}.
		  \end{equation}
		  Combining (\ref{chuva1}),  (\ref{chuva2}),  (\ref{chuva3}),  (\ref{chuva4}),  (\ref{chuva5}) with  (\ref{*}), we obtain
		  $$
		  [\psi(E_{n+1,i}), \psi(E_{n+1,j})] = \sum_{1 \leq s,t \leq n} x_s x_t \otimes [b_{i,s}, b_{j,t}] 
		  + \sum_{1 \leq s \leq n} x_s \otimes [b_{i,s}, E_{n+1,j}] $$ $$- \sum_{1 \leq s \leq n} E_{n+1,j} (x_s) \otimes b_{i,s} + \sum_{1 \leq t \leq n} x_t \otimes [E_{n+1,i}, b_{j,t}] + \sum_{1 \leq t \leq n} E_{n+1,i}(x_t) \otimes b_{j,t}=
		  $$
		  $$ 
		  x_j x_i  \otimes ( E_{j,j} - E_{i,i}) +\sum_{1 \leq s \not= j \leq n}  x_s x_i \otimes E_{s,j} + \sum_{1 \leq t \not=i \leq n} x_j x_t \otimes (- E_{t,i})$$ $$
		  - x_i \otimes E_{n+1,j}- x_j \otimes E_{n+1,i}$$ $$ - x_j x_i \otimes (E_{n+1, n+1}- E_{i,i})  + \sum_{1 \leq s \not= i  \leq n} ( x_j x_s) \otimes E_{s,i} + x_j \otimes  E_{n+1,i} +   x_i \otimes  E_{n+1,j} $$ $$+ x_i x_j \otimes (E_{n+1, n+1}- E_{j,j}) -  \sum_{1 \leq t \not= j \leq n} ( x_ix_t) \otimes E_{t,j}=$$
		  $$ 
		  \sum_{1 \leq s \not= j \leq n}  x_s x_i \otimes E_{s,j} + \sum_{1 \leq t \not=i \leq n} x_j x_t \otimes (- E_{t,i})  + \sum_{1 \leq s \not= i  \leq n} ( x_j x_s) \otimes E_{s,i}  -  \sum_{1 \leq t \not= j \leq n} ( x_ix_t) \otimes E_{t,j}=$$ $$ 0 = \pi(0) =  \psi([E_{n+1,i}, E_{n+1,j}]).$$

		  6.  For  $ 1 \leq a,b,i \leq n$ we have
		  \begin{equation} \label{**}
		  [\psi(E_{a,b}), \psi(E_{n+1, i})] = \end{equation} $$ [ 1 \otimes E_{a,b} + E_{a,b}, \sum_{1 \leq j \leq n} x_j \otimes b_{i,j} + 1 \otimes E_{n+1, i} + E_{n+1,i}]=
		  $$
		  $$ [ 1 \otimes E_{a,b} , \sum_{1 \leq j \leq n} x_j \otimes b_{i,j}
		  ] + [E_{a,b}, \sum_{1 \leq j \leq n} x_j \otimes b_{i,j}] + [ 1 \otimes E_{a,b}, 1 \otimes E_{n+1, i}] +
		  $$
		  $$[ E_{a,b},  1 \otimes E_{n+1, i}] + [ 1 \otimes E_{a,b}, E_{n+1,i}]  + [  E_{a,b} ,E_{n+1,i}] =
		  $$
		  $$\sum_{1 \leq j \leq n} x_j \otimes [E_{a,b}, b_{i,j}]
		  +  \sum_{1 \leq j \leq n} E_{a,b}(x_j) \otimes b_{i,j} + 1 \otimes [ E_{a,b},  E_{n+1, i}] +
		  [E_{a,b}, E_{n+1,i}] =$$
		  $$\sum_{1 \leq j \leq n} x_j \otimes  [E_{a,b}, E_{j,i} - \delta_{j,i} E_{n+1, n+1}] + \sum_{1 \leq j \leq n} E_{a,b}(x_j) \otimes(E_{j,i} - \delta_{j,i} E_{n+1, n+1}) + $$
		  $$ 1 \otimes (  - \delta_{i,a} E_{n+1,b}) + (  - \delta_{i,a} E_{n+1,b}).$$
		  Note that $E_{a,b}(x_j) = - \delta_{a,j} x_b$, $[E_{a,b}, E_{n+1,i}] = - \delta_{i,a} E_{n+1, b}$ and hence by (\ref{**}) 
		  $$
		  [\psi(E_{a,b}), \psi(E_{n+1, i})] = \sum_{1 \leq j \leq n} x_j \otimes  [E_{a,b}, E_{j,i} - \delta_{j,i} E_{n+1, n+1}]$$ $$ - \sum_{1 \leq j \leq n} \delta_{a,j} x_b \otimes(E_{j,i} - \delta_{j,i} E_{n+1, n+1}) +  1 \otimes ( - \delta_{i,a} E_{n+1,b})  - \delta_{i,a} E_{n+1,b}=$$
		  $$ \sum_{1 \leq j \leq n} x_j \otimes  [E_{a,b}, E_{j,i}] - x_b \otimes(E_{a,i} - \delta_{a,i} E_{n+1, n+1}) +  1 \otimes ( - \delta_{i,a} E_{n+1,b})  - \delta_{i,a} E_{n+1,b}=$$
		  $$ \sum_{1 \leq j \not= b \leq n} x_j \otimes  [E_{a,b}, E_{j,i}] - x_b \otimes( \delta_{a,i} E_{b,b}  - \delta_{a,i} E_{n+1, n+1}) +  1 \otimes ( - \delta_{i,a} E_{n+1,b})  - \delta_{i,a} E_{n+1,b}=$$
		  $$ - \delta_{a,i} (\sum_{1 \leq j \not= b \leq n} x_j \otimes E_{j,b}  + x_b \otimes(E_{b,b}  - E_{n+1, n+1}) +  1 \otimes (  E_{n+1,b})  + E_{n+1,b}).$$
		  On other hand
		  $[E_{a,b}, E_{n+1,i}] = - \delta_{i,a} E_{n+1,b}$ and hence
		  $$ \psi([E_{a,b}, E_{n+1,i}]) = - \delta_{i,a} \psi ( E_{n+1,b}) =  - \delta_{i,a} ( \sum_{1 \leq j \leq n} x_j \otimes b_{b,j} +  1 \otimes E_{n+1,b} + E_{n+1,b})=$$
		  $$- \delta_{i,a} (\sum_{1 \leq j \leq n} x_j \otimes (E_{j,b} - \delta_{j,b} E_{n+1, n+1}) + 1 \otimes E_{n+1,b} + E_{n+1,b})=
		  $$
		  $$- \delta_{i,a} (\sum_{1 \leq j \not= b \leq n} x_j \otimes E_{j,b} + x_b \otimes (E_{b,b} - E_{n+1, n+1}) +  1 \otimes E_{n+1,b} + E_{n+1,b})= [\psi(E_{a,b}), \psi(E_{n+1, i})] .
		  $$

      \section{Some examples of abelian and nilpotent self-similar Lie algebras} \label{abelian}
     
     \begin{lemma} Let $Q$ be an abelian Lie algebra with a basis $\{ a_i \}_{i \in I}$, where $I = \{ 1,2,\ldots, n \}$  or $I = \{ 1,2,\ldots, n, n+1, \ldots \}$ is countable.  Then $L$ is a faithful self-similar Lie algebra.
     \end{lemma}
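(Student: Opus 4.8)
The plan is to exhibit an explicit self-similar embedding of $Q$ and to check the few required properties by hand, without appealing to Theorems A or B. Set $X = k[x_i : i \in I]$ if $\operatorname{char}k = 0$ and $X = k[x_i : i\in I]/(x_i^p : i\in I)$ if $\operatorname{char}k = p > 0$; when $I$ is infinite this $X$ is infinite-dimensional, which is harmless here because the wreath construction $Q \wr \operatorname{Der}X = (X\otimes Q)\leftthreetimes\operatorname{Der}X$ and the definitions of a (faithful) self-similar Lie algebra in Sections \ref{intr} and \ref{section-structure} make sense for any commutative $k$-algebra $X$ — we will not need the universal algebra $\mathcal{W}(X,L_0)$ of Section \ref{universal}, whose construction does require $\dim_k X < \infty$. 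Each $\partial/\partial x_i$ is a well-defined element of $\operatorname{Der}X$. Define a $k$-linear map $\psi : Q \to (X\otimes Q)\leftthreetimes\operatorname{Der}X$ on the given basis by $\psi(a_i) = x_i\otimes a_i + \partial/\partial x_i$.

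First I would verify that $\psi$ is a homomorphism of Lie algebras. Since $Q$ is abelian it suffices to show $[\psi(a_i),\psi(a_j)] = 0$ for all $i,j$, and for $i = j$ this is immediate. For $i\neq j$, expanding by bilinearity via the bracket relations of Section \ref{section-structure}: $[x_i\otimes a_i, x_j\otimes a_j] = x_ix_j\otimes[a_i,a_j] = 0$ because $Q$ is abelian; $[\partial/\partial x_i,\partial/\partial x_j] = 0$ because partial derivatives commute; and $[\partial/\partial x_i, x_j\otimes a_j] = (\partial x_j/\partial x_i)\otimes a_j = \delta_{ij}\otimes a_j = 0$, and symmetrically $[x_i\otimes a_i, \partial/\partial x_j] = 0$. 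Hence $[\psi(a_i),\psi(a_j)] = 0 = \psi([a_i,a_j])$. Next, $\psi$ is proper, since $\psi(a_i)$ has a non-zero component $\partial/\partial x_i$ in $\operatorname{Der}X$ (so $\operatorname{Im}\psi\not\subseteq X\otimes Q$) and a non-zero component $x_i\otimes a_i$ in $X\otimes Q$ (so $\operatorname{Im}\psi\not\subseteq\operatorname{Der}X$). And $\psi$ is injective: composing with the projection $\pi : Q\wr\operatorname{Der}X\to\operatorname{Der}X$ yields $\pi\psi(a_i) = \partial/\partial x_i$, and the family $\{\partial/\partial x_i\}_{i\in I}$ is $k$-linearly independent in $\operatorname{Der}X$, so $\pi\psi$, hence $\psi$, is injective on $Q$.

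Finally I would check faithfulness, i.e. injectivity of $\nu : Q\to\operatorname{End}_k(T(X))$; in fact $Q$ already acts faithfully on the first level $X = X^{\otimes 1}$. If $0\neq q = \sum_{i\in F}c_ia_i$ with $F\subseteq I$ finite and $c_j\neq 0$ for some $j\in F$, then by the action described in Section \ref{section-structure} the element $q$ acts on $X$ through its derivation part $\sum_{i\in F}c_i\,\partial/\partial x_i$, so $q\cdot x_j = c_j\neq 0$ in $X$; hence $q\notin\ker(Q\to\operatorname{End}_k(X))$, and therefore $\bigcap_{m\geq 1}\ker(Q\to\operatorname{End}_k(X^{\otimes m})) = 0$. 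Thus $Q$ is a faithful $\psi$-self-similar Lie algebra, as asserted. I do not expect any serious obstacle here; the only two points requiring a little care are the bracket computation above (which with this choice of $\psi$ collapses term by term) and the observation that an infinite ``alphabet'' $X$ is admissible in the general framework. One could alternatively derive a comparable $\psi$, at least when $\dim Q\geq 2$, from Theorem B applied to the splitting $Q = \operatorname{span}\{a_i : i\neq 1\}\leftthreetimes k a_1$ with the shift virtual endomorphism $\theta(a_i) = a_{i-1}$, invoking parts (b) and (c) of Theorem A for injectivity and faithfulness; but the direct construction is cleaner and also covers $\dim Q = 1$.
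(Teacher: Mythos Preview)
Your proof is correct, but it follows a genuinely different route from the paper's. The paper applies its own machinery: it takes $L_0 = k a_1$, $H = \operatorname{span}\{a_i : i \geq 2\}$, and the shift virtual endomorphism $\theta(a_i) = a_{i-1}$, then invokes Theorem~B to produce $\psi$ (with one-variable alphabet $X = k[x]$ or $k[x]/(x^p)$) and Theorem~A to conclude injectivity and faithfulness; this yields a rank-$1$ self-similar structure with a nontrivial virtual endomorphism, illustrating the paper's main theme. You instead build an explicit $\psi$ with one variable per basis vector, so that $\pi\psi$ is already injective, $H = \ker(\pi\psi) = 0$, and faithfulness is visible on $X^{\otimes 1}$; this is more elementary (no appeal to Theorems~A or~B), handles the edge case $\dim Q = 1$ cleanly (where the shift construction degenerates to $\psi(Q)\subseteq\operatorname{Der}X$, which is not proper), and is self-contained, at the cost of an infinite-dimensional alphabet when $I$ is infinite and of bypassing the virtual-endomorphism viewpoint that the lemma is meant to exemplify. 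Your closing remark about the alternative via Theorem~B with the shift $\theta$ is exactly the paper's argument.
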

     
     \begin{proof} 

We can define $\theta : H \to L$, where $H$ is the span $\{a_i \mid i \geq 2 \}$ and $\theta(a_i) = a_{i-1}$. By Theorem B $L$ is a self-similar Lie algebra, furthermore by Theorem A b) $L$ is a faithful self-similar Lie algebra.
     \end{proof}

\begin{lemma} Let $Q$ be the Lie algebra over a field $k$  with basis $\{ a,b, [a,b] \}$ such that $[a,b]$ is a central element.  Let $\theta : H \to Q$ be the homomorphism of Lie algebras given by
$\theta(b) = a$ and $\theta([a,b]) = a + [a,b]$, where $H$ is the linear span of $b$ and $[a,b]$. Then $Q$ is a faithful $\psi$-self-similar Lie algebra with virtual endomorphism $\theta$ and $X = k[x]/ (x^p)$ if $char(k) = p > 0$ and $X = k[x]$ if $char(k) = 0$.
 \end{lemma}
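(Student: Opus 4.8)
The plan is to realize $Q$ as a split extension $H\leftthreetimes L_0$ and then apply Theorems B and A. Write $c=[a,b]$, so $Q$ has basis $\{a,b,c\}$ with $c$ central. I would set $H=kb\oplus kc$ and $L_0=ka$. Then $H$ is an abelian ideal of $Q$ (indeed $[a,b]=c\in H$, $[a,c]=0$ and $[b,c]=0$), $L_0$ is an abelian subalgebra, $Q=H\leftthreetimes L_0$, and $\dim L_0=1$, so we are in the setting of Theorem B with $n=1$ and $y_1=a$. I would also check that $\theta$ is genuinely a Lie algebra homomorphism: $H$ is abelian, and $[\theta(b),\theta(c)]=[a,a+c]=0=\theta([b,c])$, so all brackets are preserved.

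Next I would verify the nilpotency hypothesis of Theorem B. Since $L_0$ is spanned by $a$, it suffices to check that $\theta(a^m\circ h)=0$ for $h\in H$. From $a\circ b=[a,b]=c$ and $a\circ c=[a,c]=0$ we get $a^2\circ b=a\circ c=0$ and $a^2\circ c=0$, hence $a^m\circ h=0$ for every $h\in H$ and every $m\ge 2$; so taking $m=2$ when $char(k)=0$ and $m=p$ when $char(k)=p>0$ (which is $\ge 2$) the hypothesis holds. Theorem B then produces a Lie algebra homomorphism $\psi:Q\to (X\otimes Q)\leftthreetimes{\mathcal Der}X$ with $\psi(a)=\partial/\partial x$, given by the formula in Theorem A; since $\theta(a^i\circ h)=0$ for $i\ge 2$, the sum truncates and
$$\psi(b)=1\otimes a+x\otimes(a+c),\qquad \psi(c)=1\otimes(a+c),$$
with $\theta$ the virtual endomorphism of $\psi$ for the augmentation $\epsilon(x)=0$. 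These formulas already show $\psi$ is proper: $\psi(a)\in{\mathcal Der}X$ lies in no Lie subalgebra of $X\otimes Q$, while $\psi(b)\notin{\mathcal Der}X$.

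Finally I would invoke parts (b) and (c) of Theorem A. Writing a general element of $H$ as $\alpha b+\beta c$, we have $\theta(\alpha b+\beta c)=(\alpha+\beta)a+\beta c$, which vanishes only for $\alpha=\beta=0$; thus $Ker(\theta)=0$, so $Ker(\theta)$ contains no nonzero ideal of $Q$ and $\psi$ is a monomorphism by Theorem A(b). For Theorem A(c) I would rule out a nonzero $\theta$-invariant ideal $J\subseteq H$: if $0\ne v=\alpha b+\beta c\in J$ then $[a,v]=\alpha c\in J$, so either $\alpha\ne 0$ and $c\in J$ directly, or $\alpha=0$, forcing $\beta\ne 0$ and again $c\in J$; but then $\theta(c)=a+c\in J$, contradicting $J\subseteq H$ since $a+c\notin H$. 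Hence no such $J$ exists and $Q$ is a faithful $\psi$-self-similar Lie algebra. The only step requiring care is this last one — one must check the $\theta$-invariant ideal condition of Theorem A(c), not merely the weaker injectivity of $\theta$ — but the short ideal computation above handles it, so there is no substantive obstacle.
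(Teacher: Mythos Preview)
Your proof is correct and follows the same route as the paper: split $Q=H\leftthreetimes ka$, note $a^2\circ H=0$, apply Theorem~B, and then invoke Theorem~A. In fact you are more careful than the paper, which only cites Theorem~A(b) for faithfulness; you correctly invoke Theorem~A(c) and supply the short ideal argument showing no nonzero $\theta$-invariant ideal of $Q$ lies inside $H$.
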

\begin{proof} Note that $Q$ is nilpotent of class 2, so $a^2 \circ Q = [a,[a, Q]] = 0$ and $Q = H  \leftthreetimes k a$, thus by Theorem B $Q$ is a $\psi$-self-similar Lie algebra with virtual endomorphism $\theta$ and $\psi(a) = \partial / \partial x$. By Theorem A b) $Q$ is a faithful $\psi$-self-similar Lie algebra.
\end{proof}

Let $L$ be the Lie subalgebra (over $\mathbb{F}_p$) of $gl_3(\mathbb{F}_p[x])$ that contains only matrices whose entries below and on the main diagonal are 0. Denote by $e_{i,j}$ the matrix whose only non-zero entry is 1 and is at place $(i,j)$.
For $y_1 = e_{1,2}$, $y_3 = e_{2,3}$, $y_2 = e_{1,3}$ and $y_4 = x e_{1,3}$ we have that the span $L_0$ of $y_1, \ldots, y_4$ is a direct sum of the one dimensional Lie algebra spanned by $y_4$ and  the Heisenberg Lie algebra ${\mathcal H}$ spanned by $y_1, y_2, y_3$. Then
$L = H \leftthreetimes L_0$ where $a = (a_{i,j}) \in H$ if $a_{i,j} \in x^{j-i} \mathbb{F}_p[x]$ for $1 \leq i,j \leq 3$. Note that $L$ is an infinite dimensional nilpotent of class 2 Lie algebra.
   
   \begin{prop}
Let $L$ be the Lie subalgebra (over $\mathbb{F}_p$) of $gl_3(\mathbb{F}_p[x])$ that contains only matrices whose entries below and on the main diagonal are 0. Let  $$
\theta : H \to L
$$
be the homomorphism 
 given by $\theta(a) = b$, where $a = (a_{i,j})$ and $b = (b_{i,j})$ with $b_{i,j} = x^{i-j} a_{i,j}$. Then $L$ is a faithful self-similar Lie algebra with virtual endomorphism $\theta$.
\end{prop}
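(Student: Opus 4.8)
The plan is to construct the self-similar map $\psi$ from Theorem \ref{heisenberg2} and then read off faithfulness from Theorem A.

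\emph{Set-up.} As recorded just before the statement, $L = H \leftthreetimes L_0$ with $L_0 = {\mathcal H}\oplus \mathbb{F}_p y_4$, where $y_1 = e_{1,2}$, $y_3 = e_{2,3}$, $y_2 = e_{1,3} = [y_1,y_3]$ span a copy of the Heisenberg algebra and $y_4 = x e_{1,3}$ is central in $L$. Put $X = \mathbb{F}_p[x_1,x_2,x_4]/(x_1^p,x_2^p,x_4^p)$, $\psi(y_i) = \partial / \partial x_i$ for $i\in\{1,2,4\}$ and $\psi(y_3) = x_1 \partial / \partial x_2$; this is precisely the embedding of ${\mathcal H}\oplus \mathbb{F}_p y_4$ into ${\mathcal Der}X$ used in Theorem \ref{heisenberg2}. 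I would first observe that $\theta$ is the restriction to $H$ of conjugation by the diagonal matrix $g = \mathrm{diag}(1,x,x^2)\in GL_3(\mathbb{F}_p(x))$, i.e.\ $\theta(a) = g a g^{-1}$: indeed $(g a g^{-1})_{ij} = x^{i-j}a_{ij}$, and the divisibility conditions defining $H$ are exactly what force $g a g^{-1}\in L$ for $a\in H$. In particular $\theta$ is automatically a homomorphism of Lie algebras, and, being ``division by powers of $x$'', it is injective.

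\emph{Applying Theorem \ref{heisenberg2}.} Hypothesis 1 holds with $m=p$: $y_2$ and $y_4$ are central in $L$, so $y_2\circ H = y_4\circ H = 0$, while $[e_{1,2},L]\subseteq \mathbb{F}_p[x]e_{1,3}$ is central, giving $y_1^2\circ h = [e_{1,2},[e_{1,2},h]] = 0$ for all $h$; hence $b^p\circ H = 0$ for every $b\in\{y_1,y_2,y_4\}$. For Hypothesis 2, note $y_3\circ h = [e_{2,3},h]$ lies in the centre $\mathbb{F}_p[x]e_{1,3}$ of $L$, so $y_1 y_3\circ h = [e_{1,2},y_3\circ h] = 0$; combined with $y_2\circ h = 0$ this yields $\theta(y_1 y_3\circ h) = 0 = \theta(y_2\circ h)$, and the remaining identity $\theta(y_3\circ h) = 0$ is a short direct computation on the $(1,3)$-entry, using the explicit form of $\theta$ and the definition of $H$. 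Granting the hypotheses, Theorem \ref{heisenberg2} produces a Lie homomorphism $\psi : L\to L \wr {\mathcal Der} X$ given by (\ref{nilp12}), with associated virtual endomorphism $\theta$.

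\emph{Faithfulness.} These data also satisfy the hypotheses of Theorem A, with the abelian subalgebra $\langle y_1,y_2,y_4\rangle\subseteq L_0$ in the role of $y_1,\dots,y_n$ (here $n=3$) and $m=p$ in condition 3. By Theorem A c) it then suffices to show that $H$ contains no non-zero $\theta$-invariant ideal $J$ of $L$. If $0\neq J\subseteq H$ with $\theta(J)\subseteq J$, then $\theta^k(J)\subseteq J\subseteq H$ for all $k\geq 1$; but $\theta^k = \mathrm{Ad}(g^k)$ with $g^k = \mathrm{diag}(1,x^k,x^{2k})$, so for $0\neq j\in J$ the $x$-exponents occurring in $\theta^k(j)$ decrease linearly in $k$, which eventually contradicts $\theta^k(j)\in H$ unless $j = 0$. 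Hence $L$ is faithful $\psi$-self-similar. (Alternatively, $\theta$ injective gives $Ker(\theta) = 0$, so Theorem A b) already shows $\psi$ is a monomorphism.) The one genuinely delicate point is the verification of Hypothesis 2 of Theorem \ref{heisenberg2} --- especially $\theta(y_3\circ h) = 0$, which is exactly what dictates the divisibility conditions in the definition of $H$; the rest is routine bookkeeping with strictly upper triangular $3\times3$ matrices.
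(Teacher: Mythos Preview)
Your strategy is exactly the paper's: invoke Theorem \ref{heisenberg2} to build $\psi$ and then Theorem A for faithfulness (the paper cites ``Theorem A b)'' but the justification given --- no non-trivial $\theta$-invariant ideal inside $H$ --- is precisely the criterion of part c), which you use). Your faithfulness argument via $\mathrm{Ad}(g^k)$ is a pleasant elaboration of that step.

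However, there is a genuine gap in the verification of the hypotheses of Theorem \ref{heisenberg2}, and it is not merely cosmetic: the subspace $H$ described before the Proposition is \emph{not} an ideal of $L$. Take $h = x e_{2,3}\in H$; then $y_1\circ h = [e_{1,2},xe_{2,3}] = xe_{1,3}$, whose $(1,3)$-entry is $x\notin x^{2}\mathbb F_p[x]$, so $y_1\circ h\notin H$. Similarly $y_3\circ(xe_{1,2}) = -xe_{1,3}\notin H$. Consequently $\theta(y_1\circ h)$ and $\theta(y_3\circ h)$ are not defined, and your claimed ``short direct computation'' that $\theta(y_3\circ h)=0$ cannot go through: with $h = xe_{1,2}$ your own description $\theta = \mathrm{Ad}(\mathrm{diag}(1,x,x^2))$ would return $-x^{-1}e_{1,3}$, which is neither zero nor in $L$. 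The divisibility conditions on $H$ force $h_{1,2}\in x\mathbb F_p[x]$, but what you actually need for $[y_3,h]\in H$ is $h_{1,2}\in x^{2}\mathbb F_p[x]$; so contrary to what you write, the definition of $H$ does \emph{not} dictate $\theta(y_3\circ h)=0$.

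This gap already blocks the very formula (\ref{nilp12}) for $\psi(h)$, since it requires evaluating $\theta$ on $y_1^{i_1}y_2^{i_2}y_4^{i_4}\circ h$. The paper's own two-line proof glosses over the same point. A repair would have to either change $H$ (but then the vector-space complement to $L_0$ breaks) or extend the domain of $\theta$ in a way compatible with Theorem \ref{heisenberg2}; as written, the hypotheses of that theorem cannot be checked for this $(H,\theta,L_0)$.
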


\begin{proof}Since $L = H \leftthreetimes L_0$, where $L_0 \simeq {\mathcal H} \oplus \mathbb{F}_p$ we can apply  Theorem \ref{heisenberg2}. The faithfulness of the constructed self-similarity follows from Theorem A b) since $H$ does not contain $\theta$-invariant ideal of $L$.
\end{proof}
   \section{An example of a self-similar metabelian Lie algebra of homological type $FP_n$} \label{fpn}
   
   \subsection{Preliminaries on finitely presented metabelian Lie algebras}
   \label{sub-sec-BG}
   
   Recall that a Lie algebra $L$ is finitely presented with a finite presentation $\langle X \mid R \rangle$ if $L \simeq F(X) / (R)$ where $F(X)$ is the free Lie algebra with a free basis $X$ and $(R)$ is the ideal of $F(X)$ generated by $R$.
    
    Let $A \to L \to Q$ be a short exact sequence of Lie algebras over a field $k$, where $L$ is finitely generated and both $A$ and $Q$ are abelian. 
      Let $U(Q)$ be the universal enveloping algebra of $Q$. We view $A$ as a left $U(Q)$-module via the adjoint action i.e. 
      $$
      q \circ a = [b, a] \hbox{ where } a\in A, b \in L \hbox{ such that } \pi(b) = q
      $$
      where $\pi : L \to Q$.
      Since $L$ is finitely generated as a Lie algebra, $A$ is a finitely generated $U(Q)$-module. Consider the $k$-algebra
      $$
      R = U(Q)/ ann_{U(Q)} A.
      $$
      Let $k[[t]]$ be the ring of power series  and  let $K = \cup_{j \geq 1} t^{-j}k[[t]]$ be its field of fractions. Then the Bryant-Groves invariant $\Delta(A,Q)$ is defined by 
      $$
      \Delta(A,Q) = \{ [\varphi] \mid \varphi \hbox{ can be extended to a }k\hbox{-algebra homomorphism }\widetilde{\varphi} : R \to K  \}
      $$
      where $\varphi : Q \to K$ is a $k$-linear map
      and $[\varphi]$ is the class of equivalence of $\varphi$ with respect to the equivalence relation $\sim$ in the set of all $k$-linear maps $Q \to K$,  where $\varphi_1 \sim \varphi_2$
      if $Im (\varphi_1 - \varphi_2) \subseteq k[[t]]$.

      \begin{theorem} \cite{Br-Gr1}, \cite{Br-Gr2}
    Let $A \to L \to Q$ be a short exact sequence of Lie algebras over a field $k$, where $L$ is finitely generated and both $A$ and $Q$ are abelian. Then the following conditions are equivalent :
    
      1. $A \wedge A$ is finitely generated as $U(Q)$-module via the diagonal $Q$-action i.e. $$q \circ ( a_1 \wedge a_2) = (q \circ a_1) \otimes a_2 + a_1 \otimes (q \circ a_2);$$  
      
      2. $A \otimes A$ is finitely generated as $U(Q)$-module via the diagonal $Q$-action;
      
    3. $L$ is a finitely presented Lie algebra;
    
      4. The Bryant-Groves invariant $\Delta(Q,A)$ does not have antipodal non-zero elements i.e. $\Delta(Q,A) \cap - \Delta(Q,A) = [0]$.  
      \end{theorem}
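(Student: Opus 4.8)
The plan is to establish the cycle $3 \Rightarrow 2 \Rightarrow 1 \Rightarrow 4 \Rightarrow 3$, which forces all four conditions to be equivalent, with the Bryant--Groves invariant $\Delta(A,Q)$ acting as the bridge between the homological conditions $1,2,3$ and an explicit finite presentation. Throughout write $R = U(Q)/\mathrm{ann}_{U(Q)}(A)$; since $Q$ is a quotient of the finitely generated Lie algebra $L$ and is abelian it is finite dimensional, so $U(Q) = k[q_1,\dots,q_n]$ is Noetherian, $R$ is a finitely generated commutative $k$-algebra, and $A$ is a finitely generated faithful $R$-module. In particular $2 \Rightarrow 1$ is immediate, as $A \wedge A$ is a quotient $U(Q)$-module of $A \otimes A$.

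For $3 \Rightarrow 2$ one uses that a finitely presented Lie algebra is of homological type $FP_2$, and then that a finitely generated metabelian Lie algebra $A \to L \to Q$ is $FP_2$ precisely when $A \otimes A$ is finitely generated over $U(Q)$ under the diagonal action. In the split case this is the case $m = 2$ of the $FP_m$-criterion of \cite{Desi}; in general one feeds the extension into the Lyndon--Hochschild--Serre spectral sequence $H_p(Q;H_q(A;k)) \Rightarrow H_{p+q}(L;k)$, notes that $H_q(A;k) = \Lambda^q A$ with its diagonal $Q$-action because the Chevalley--Eilenberg complex of the abelian Lie algebra $A$ has zero differentials, and compares the partial finite free $U(L)$-resolution provided by $FP_2$ with the low-degree part of the $E^2$-page to extract finite generation of $\Lambda^2 A$, hence, together with finite generation of $A$, of $A \otimes A$.

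The implication $1 \Rightarrow 4$ is the easy half of the Lie-algebra analogue of the Bieri--Strebel--Groves theory of tame modules. A class of $\Delta(A,Q)$ is, up to the equivalence described just before the theorem, a valuation datum: it records the behaviour at the place of $K = \bigcup_{j\ge 1} t^{-j}k[[t]]$ of a $k$-algebra homomorphism $R \to K$, and $-\Delta(A,Q)$ records the opposite valuation. If $\Delta(A,Q)$ contained an antipodal pair $[\varphi],[-\varphi]$ different from $[0]$, one could combine the two algebra homomorphisms $R \to K$ with the comultiplication of $U(Q)$ to exhibit an explicit infinite $U(Q)$-generating set of $A \wedge A$ no finite subset of which generates (a direct computation with monomials in a Laurent variable, with the unboundedness of $\varphi$ producing the infinite ascent); hence $A \wedge A$ finitely generated forces $\Delta(A,Q)\cap -\Delta(A,Q) = [0]$.

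The main obstacle is $4 \Rightarrow 3$: manufacturing a genuinely finite presentation of $L$ out of the valuation condition (equivalently, proving $FP_2 \Rightarrow$ finitely presented for metabelian Lie algebras, an implication that fails in general). This bundles the two hard pieces of the theory. First, the nontrivial direction of the tame-module characterisation: if $\Delta(A,Q)\cap -\Delta(A,Q) = [0]$ then the diagonal $U(Q)$-module $A \otimes A$ (equivalently $A \wedge A$) is finitely generated --- proved by a Noetherian/compactness induction on the Krull dimension of $R$, reducing to one-dimensional quotients where $K$ and the hypothesis are used directly. Second, the construction: from a finite generating set of $L$ (lifts of a basis $q_1,\dots,q_n$ of $Q$ together with finitely many $U(Q)$-module generators $a_1,\dots,a_m$ of $A$) one writes down relations of three kinds --- the brackets $[q_i,q_j]$ (certain elements of $A$), a finite set of module relations presenting $A$ (available by Noetherianity of $U(Q)$), and finitely many instances of the metabelian law $[[u,v],[w,z]]=0$ --- and shows this list is complete; the exact point at which only finitely many instances of the metabelian law are forced is finite generation of $A \otimes A$. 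This second step is the Lie-algebra analogue of the Bieri--Strebel construction of finite presentations of tame metabelian groups, and together with the first it is the computational heart of the Bryant--Groves argument; that is where I expect the real difficulty to lie.
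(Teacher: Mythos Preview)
The paper does not prove this theorem: it is quoted from \cite{Br-Gr1}, \cite{Br-Gr2} as background in Section~\ref{sub-sec-BG} and then simply used. There is therefore no proof in the paper to compare your attempt against.

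Your outline is a faithful high-level summary of the Bryant--Groves strategy, and you have correctly located the hard step ($4 \Rightarrow 3$) and named its two ingredients (the nontrivial direction of the tame-module characterisation, and the explicit construction of a finite presentation once $A \otimes A$ is known to be finitely generated). One logical wrinkle worth flagging: in your $3 \Rightarrow 2$ paragraph you extract finite generation of $\Lambda^2 A$ from $FP_2$ via the Lyndon--Hochschild--Serre spectral sequence and then write ``hence, together with finite generation of $A$, of $A \otimes A$''. That last clause is not immediate --- $A \otimes A$ sits in a short exact sequence with $\Lambda^2 A$ and $S^2 A$, and finite generation of $A$ and of $\Lambda^2 A$ does not by itself force finite generation of $S^2 A$ over $U(Q)$ with the diagonal action. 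The cleanest repair is to reroute the cycle: your spectral-sequence argument genuinely proves $3 \Rightarrow 1$, and your own description of $4 \Rightarrow 3$ already passes through finite generation of $A \otimes A$ as an intermediate step, so it delivers $4 \Rightarrow 2$ for free. Then $2 \Rightarrow 1$ (trivial), $3 \Rightarrow 1 \Rightarrow 4 \Rightarrow 3$, and $4 \Rightarrow 2$ together give all the equivalences without the unjustified shortcut.
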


      \subsection{Finite presentation of metabelian Lie algebras : an example} The following example was first considered in \cite{Br-Gr1}. We quote it here to exhibit how the above criterion of finite presentability  is used.
      Suppose that $dim_k Q = 2$ and let $e_1$ and $e_2$ be a basis of $Q$ as a vector space.  Set $A  = k[x]$ the polynomial ring on one variable $x$. The adjoint action of $e_1$ is multiplication with $x$ and the adjoint action of $e_2$ is the multiplication with $x^r$ where $r \geq 1$ i.e.
      $$
      [e_1, a] = xa \hbox{ and } [e_2, a] = x^r a \hbox{ for } a \in A
      $$
      Thus $A$ is a cyclic $U(Q)$-module, so $R = A$ in this case. Every $k$-algebra homomorphism
      $$\widetilde{\varphi} : R = k[x] \to K$$
      is uniquely determined by $\widetilde{\varphi}(x) = \lambda \in K = \cup_{j \geq 1}  t^{-j} k[[t]]$. Then for $\varphi = \widetilde{\varphi} \mid_Q : Q \to K$ we have $\varphi(x) = \widetilde{\varphi} (e_1) = \lambda$ and $\varphi(e_2) = \widetilde{\varphi}(x^r) = \lambda^r$.
      
      Suppose that $[\varphi_1] \in \Delta(Q,A)$ be such that $[\varphi_2] \in \Delta(Q,A)$ and $[\varphi_2] = [- \varphi_1] \not= [0]$. Then $\varphi_i(x) = \lambda_i$ and $\lambda = \lambda_1 + \lambda_2 \in k[[t]], \lambda_1^r + \lambda_2^r \in k[[t]]$. 
      
      Thus if $r$ is odd we can choose $\lambda_1 \in K \setminus k[[t]]$, $\lambda_2 = - \lambda_1$. Thus $[\varphi_1] \in \Delta(Q,A) \cap - \Delta(Q,A)$, hence $\Delta(Q,A) \cap - \Delta(Q,A) \not= [0]$. 
      
      If $r$ is even,   $\lambda_1 \in a_0 t^{-s} + t^{-s+1} k[[t]]$ for some $a_0 \in k \setminus \{ 0 \}$. Then $\lambda_2 \in  -a_0 t^{-s} + t^{-s+1} k[[t]]$ and $\lambda_1^r + \lambda_2^r \in k[[t]]$, hence $2a_0^r = a_0^r + (- a_0)^r = 0$ in $k$. This is possible only if $char(k) = 2$.
 If $char(k) = 2$ $L$ is not finitely presented since $A$ is infinite dimensional, details can be found in \cite{Br-Gr1}.

      Hence by the Bryant-Groves criterion  we have : 
      
      1. if $r$ is odd then $L$ is not finitely presented.
      
      2. if $r$ is even, $char(k) \not= 2$ then $L$ is finitely presented;
      
      3. if $r$ is even, $char(k) = 2$ then $L$ is not finitely presented.

      \subsection{Preliminaries on metabelian Lie algebras of type $FP_m$}

      A Lie algebra $L$ is said to be of homological type $FP_m$ if the trivial $U(L)$-module has a projective resolution with finitely generated projectives in dimensions $ \leq m$, where $U(L)$ is the universal enveloping algebra of $L$.

      \medskip
      {\bf The $FP_m$-Conjecture for metabelian Lie algebras} 
   {\it  Let $A \to L \to Q$ be a short exact sequence of Lie algebras over a field $k$, where $L$ is finitely generated and both $A$ and $Q$ are abelian. Then the following conditions are equivalent :
    
    1. $L$ has homological type $FP_m$;
    
    2. $\wedge^m A$ is finitely generated as $U(Q)$-module via the diagonal $Q$-action i.e. $q \circ (a_1 \wedge \ldots \wedge a_m) = \sum_{1 \leq i \leq m} a_1 \wedge \ldots \wedge q \circ a_i \wedge \ldots a_m$
      
      3. If $[\varphi_i] \in \Delta_A(Q)$ for $ 1 \leq i \leq m$ then $Im (\sum_{1 \leq i \leq m} \varphi_i) \not\subseteq k[[t]]$.}
      
       \medskip

      \begin{theorem} \cite{Desi} \label{fp} Let $A \to L \to Q$ be a {\bf split} short exact sequence of Lie algebras over a field $k$, where $L$ is finitely generated and both $A$ and $Q$ are abelian. Then the $FP_m$-Conjecture for metabelian Lie algebras holds.
      \end{theorem}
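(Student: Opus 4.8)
The statement packages two logically independent equivalences — a homological one, (1)$\Leftrightarrow$(2), and a valuation-theoretic one, (2)$\Leftrightarrow$(3) — and the plan is to prove each and chain them by transitivity. The splitting hypothesis $L=A\leftthreetimes Q$ is used throughout: it gives $U(L)\cong U(A)\otimes_k U(Q)$ by PBW, with $U(A)=S(A)$ the symmetric algebra on $A$ and $U(Q)=S(Q)$ a \emph{Noetherian} $k$-algebra (recall $\dim_k Q<\infty$), and it lets the homology of $A$ and the homology of $Q$ decouple.

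\emph{The homological equivalence.} For (1)$\Leftrightarrow$(2) I would run the Lyndon--Hochschild--Serre spectral sequence in homology attached to the abelian ideal $A\trianglelefteq L$,
$$E^2_{pq}=H_p\bigl(Q,\,H_q(A,M)\bigr)\ \Longrightarrow\ H_{p+q}(L,M),$$
and test $FP_m$-ness of the trivial module $k$ against coefficient modules $M=\prod_{\alpha}U(L)$ via the Bieri--Eckmann criterion ($L$ is $FP_m$ over $k$ iff $k$ is finitely generated over $U(L)$ and $H_i(L,\prod_{\alpha}U(L))=0$ for $1\le i\le m$, for every index set). Since $A$ is abelian its Chevalley--Eilenberg complex has vanishing differential, so the Koszul complex $\wedge^{\bullet}A\otimes U(A)$ is a free resolution of $k$ over $U(A)$ and computes $H_q(A,-)$ through exterior powers; together with the PBW splitting this identifies the $E^2$-page with finitely many copies of $H_p(Q,\wedge^{q}A)$, the coefficients being twisted only by the \emph{free} module $U(Q)$. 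One then reads off that $L$ is $FP_m$ over $k$ exactly when each $\wedge^{q}A$ $(0\le q\le m)$ is $FP_{m-q}$ as a $U(Q)$-module, and since $U(Q)$ is Noetherian this collapses to ``$\wedge^{q}A$ finitely generated over $U(Q)$ for all $q\le m$''. A diagonal argument — finite generation of $\wedge^{m}A$ forces that of $\wedge^{q}A$ for $q\le m$, using the description below of $\Delta_{\wedge^{q}A}(Q)$ via $q$-fold sums of directions from $\Delta_A(Q)$ — shows that condition (2), phrased for $\wedge^{m}A$ alone, is equivalent to (1).

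\emph{The valuation-theoretic equivalence, and the main obstacle.} For (2)$\Leftrightarrow$(3) I would transport the Bieri--Strebel/Bieri--Groves theory of $\Sigma$-invariants to the commutative $k$-algebra $R=U(Q)/\mathrm{ann}_{U(Q)}A$: (i) extend the Bryant--Groves set $\Delta_{(-)}(Q)$ from $A$ to an arbitrary finitely generated $R$-module $N$ — the classes $[\varphi]$ of nonzero $k$-linear maps $\varphi\colon Q\to K$ extending to $R$-compatible $k$-algebra maps $R\to K$ — and prove it is closed; (ii) prove a tensor-power formula computing $\Delta_{\wedge^{m}A}(Q)$ from the $m$-fold Minkowski sums $[\varphi_1+\cdots+\varphi_m]$ with each $[\varphi_i]\in\Delta_A(Q)$ (this is where the inclusion $\wedge^{m}A\subseteq A^{\otimes m}$ is harmless, $U(Q)$ being Noetherian); and (iii) prove that $\wedge^{m}A$ is finitely generated over $U(Q)$ iff no $m$-fold sum of directions drawn from $\Delta_A(Q)$ degenerates, i.e.\ iff $\varphi_1+\cdots+\varphi_m\notin k[[t]]$ whenever each $[\varphi_i]\in\Delta_A(Q)$, which is condition (3) (for $m=2$ this recovers the Bryant--Groves finite-presentation criterion $\Delta_A(Q)\cap-\Delta_A(Q)=[0]$). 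The hard step is the sufficiency direction of (iii), (3)$\Rightarrow$(2): one must upgrade ``directional'' finite generation — finite generation of $\wedge^{m}A$ over each ``half-space'' subalgebra of $U(Q)$ cut out by a valuation of $K/k$ — to honest finite generation over $U(Q)$, which requires the compactness of the space of $k$-valuations on $K$ and a choice of generators uniform on a neighbourhood of each direction, as in {\AA}berg's treatment of the split metabelian $FP_m$-problem for groups.

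Finally, the splitting hypothesis is genuinely needed: without it the spectral-sequence differentials in the first step need not vanish in the required range, and the extension class obstructs passing from finite generation of the $\wedge^{q}A$ to $FP_m$ of $L$. Positive characteristic also needs a little care — for $q\le n$ the exterior powers $\wedge^{q}A$ and the Koszul computation of $H_q(A,-)$ behave as they do in characteristic $0$, with no torsion in the Chevalley--Eilenberg complex — which is precisely why the application, Theorem~D, only asserts homological type $FP_n$ when $\mathrm{char}(k)>n$ or $\mathrm{char}(k)=0$. Chaining (1)$\Leftrightarrow$(2) with (2)$\Leftrightarrow$(3) then gives Theorem~\ref{fp}.
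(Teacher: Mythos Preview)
The paper does not prove this theorem: it is quoted from \cite{Desi} and carries no proof here, so there is nothing in the present paper to compare your proposal against. Your outline is broadly the strategy of the original reference --- an LHS spectral sequence together with the Bieri--Eckmann criterion for (1)$\Leftrightarrow$(2), and a $\Sigma$-invariant/valuation argument in the style of Bieri--Groves and \AA berg for (2)$\Leftrightarrow$(3) --- and as a sketch it is reasonable, with the sufficiency direction (3)$\Rightarrow$(2) correctly flagged as the substantial step.

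One correction, though, since you invoke Theorem~D: the hypothesis $\mathrm{char}(k)=0$ or $\mathrm{char}(k)>n$ in that application has nothing to do with exterior powers or the Chevalley--Eilenberg complex. It enters in the verification of condition~(3) for the specific module $A=k[x]$ with $q_i$ acting by $x^i$: one needs that the power-sum equations $\sum_i b_i^{\,j}=0$ for $1\le j\le n$ force $b_1=\cdots=b_n=0$, which is a Newton's-identities statement that fails when $\mathrm{char}(k)\le n$. The Koszul/CE machinery for abelian $A$ works uniformly in all characteristics.
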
 
 
  \subsection{The lamplighter Lie algebra} \label{lamp}
   
   By definition the lamplighter group is $G = \mathbb{F}_2 \wr \mathbb{Z}$. It is a metabelian group of the type $ A \rtimes Q$ where $Q = \langle x \rangle \simeq \mathbb{Z}$ and $A$ is a cyclic free $\mathbb{F}_2 [Q]$-module, where $Q$ acts via conjugation. Hence $A \simeq \mathbb{F}_2[x^{\pm 1}]$ is the Laurent polynomial ring.	
   
 A Lie algebra version of $G$ is a Lie algebra $L$ over the finite field $\mathbb{F}_p$ that has an abelian ideal $A$ such that $dim(L/ A) = 1$ and $A$ is a cyclic left free $U(L/ A)$-module.  Thus $L = A  \leftthreetimes Q$ where $Q = \mathbb{F}_p b$. Let  $a$ be  a generator of $A$ as a left $U(L/A)$-module. Then $A \simeq U(L/A) \simeq \mathbb{F}_p[b]$ is a polynomial ring with variable $b$ and $[b,a]$ is the element  $b \in \mathbb{F}_p[b] = A$.
     
       \subsection{Metabelian Lie algebra of type $FP_n$ : an example} \label{FPmsection}
     \begin{lemma}  Let $Q$ be an abelian Lie algebra over a field $k$ with $dim_k Q = n$ and let $\{q_1, \ldots, q_n \}$ be a basis of $Q$ as a vector space. Suppose further that $char(k) = 0$ or $char(k) = p > n$. Consider the Lie algebra $L = A \leftthreetimes Q$ with $A$ an abelian Lie algebra, where   $A  = k[x]$ and the adjoint action of $q_i$ is multiplication with $x^{{i}}$  i.e.
      $
      [q_i, a] = x^{i}a. 
      $ Then
       $L$ is of type $FP_n$.
      \end{lemma}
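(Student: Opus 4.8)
The plan is to apply Theorem \ref{fp}, the $FP_m$-conjecture for split metabelian Lie algebras, with $m=n$; this reduces the statement to a numerical condition on the Bryant--Groves invariant $\Delta(A,Q)$. The hypotheses of Theorem \ref{fp} hold: $L=A\leftthreetimes Q$ is a split extension with $A,Q$ abelian, and $L$ is finitely generated because $A=k[x]$ is cyclic as a $U(Q)$-module (generated by $1$, with $q_i$ acting by multiplication by $x^i$) and $\dim_kQ=n<\infty$. Hence it suffices to verify condition 3 of the conjecture: for every $j\le n$ and every collection of nonzero classes $[\varphi_1],\ldots,[\varphi_j]\in\Delta(A,Q)$ one has $Im(\varphi_1+\cdots+\varphi_j)\not\subseteq k[[t]]$.

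First I would compute $\Delta(A,Q)$. As $A$ is cyclic over $U(Q)=k[q_1,\ldots,q_n]$ with $q_i$ acting by multiplication by $x^i$, the ring $R=U(Q)/ann_{U(Q)}A$ is the image of the $k$-algebra map $k[q_1,\ldots,q_n]\to k[x]$, $q_i\mapsto x^i$, which is all of $k[x]$ (since $x$ is among the generators $x,x^2,\ldots,x^n$). Thus a $k$-algebra homomorphism $\widetilde\varphi\colon R=k[x]\to K$ is just a choice of $\lambda=\widetilde\varphi(x)\in K$, and the corresponding $k$-linear map $\varphi_\lambda\colon Q\to K$ satisfies $\varphi_\lambda(q_i)=\lambda^i$. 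So $\Delta(A,Q)=\{[\varphi_\lambda]:\lambda\in K\}$ — the image of the moment curve $\lambda\mapsto(\lambda,\lambda^2,\ldots,\lambda^n)$ — and $[\varphi_\lambda]=[0]$ exactly when $\lambda=\varphi_\lambda(q_1)\in k[[t]]$.

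The heart of the proof is the following. Fix $j\le n$ and suppose, for contradiction, that there are $\lambda_1,\ldots,\lambda_j\in K\setminus k[[t]]$ with $Im(\varphi_{\lambda_1}+\cdots+\varphi_{\lambda_j})\subseteq k[[t]]$. Evaluating on $q_i$ gives that the power sums $p_i:=\lambda_1^i+\cdots+\lambda_j^i$ lie in $k[[t]]$ for $1\le i\le j$. By Newton's identities, $i\,e_i=\sum_{s=1}^{i}(-1)^{s-1}e_{i-s}p_s$ (with $e_0=1$), where $e_i=e_i(\lambda_1,\ldots,\lambda_j)$; since $char(k)=0$ or $char(k)=p>n\ge j\ge i$, the integer $i$ is invertible in $k$, and by induction on $i$ we get $e_i\in k[[t]]$ for $1\le i\le j$. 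Hence $\prod_{s=1}^{j}(T-\lambda_s)=T^{j}-e_1T^{j-1}+\cdots+(-1)^{j}e_{j}$ is a monic polynomial over $k[[t]]$, so each $\lambda_s$ is integral over $k[[t]]$. As $k[[t]]$ is a discrete valuation ring it is integrally closed in its fraction field $K$, forcing $\lambda_s\in k[[t]]$ — contradicting $[\varphi_{\lambda_s}]\ne[0]$. This proves condition 3 of the $FP_m$-conjecture for $m=n$, and Theorem \ref{fp} then gives that $L$ is of type $FP_n$.

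The only genuinely delicate point, and the place where the hypothesis $char(k)=0$ or $char(k)=p>n$ is essential, is the step from $p_1,\ldots,p_j\in k[[t]]$ to $e_1,\ldots,e_j\in k[[t]]$: Newton's identities carry the denominator $i$ at the $i$-th stage, so one needs $1,2,\ldots,n$ invertible in $k$. Everything else (the description of $\Delta(A,Q)$ as a moment curve, and the integral-closedness of $k[[t]]$) is routine. As a sanity check, for $n=2$ the argument specializes exactly to the finite-presentation computation of Section \ref{fpn} (the case $r=2$), where $\Delta(Q,A)\cap-\Delta(Q,A)=[0]$ precisely when $char(k)\ne2$.
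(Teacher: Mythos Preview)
Your proof is correct and follows the same overall strategy as the paper: both apply Theorem~\ref{fp} to the split extension $L=A\leftthreetimes Q$, identify $R\simeq k[x]$ so that classes in $\Delta(A,Q)$ are parametrized by $\lambda\in K$ via $\varphi_\lambda(q_i)=\lambda^i$, and then verify condition~3 by showing that if the power sums $p_i=\sum_s\lambda_s^{\,i}$ lie in $k[[t]]$ for $1\le i\le n$ then every $\lambda_s\in k[[t]]$.

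The only difference is in this last step. The paper argues by looking at leading terms: if $j_0=\min_s\mathrm{height}(\lambda_s)<0$ and $\lambda_s\in b_st^{j_0}+t^{j_0+1}k[[t]]$, then the conditions $p_i\in k[[t]]$ force $\sum_s b_s^{\,i}=0$ in $k$ for $1\le i\le n$, and (using $\mathrm{char}(k)=0$ or $>n$) this gives $b_s=0$ for all $s$, a contradiction. You instead run Newton's identities over the ring $k[[t]]$ to get $e_1,\dots,e_j\in k[[t]]$, and then invoke the integral closedness of the DVR $k[[t]]$ in $K$ to force $\lambda_s\in k[[t]]$. Both arguments use the characteristic hypothesis in exactly the same place (inverting $1,\dots,n$ in Newton's identities); your route is a bit more conceptual, while the paper's leading-term computation is more elementary and self-contained.
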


      \begin{proof}
      Thus $A$ is a cyclic $U(Q)$-module, so $ 
      R = U(Q)/ ann_{U(Q)} A \simeq A$ in this case. Every $k$-algebra homomorphism
      $$\widetilde{\varphi} : R = k[x] \to K$$
      is uniquely determined by $\widetilde{\varphi}(x) = \lambda \in K = \cup_{j \geq 1}  t^{-j} k[[t]]$. Then for $\varphi = \widetilde{\varphi} \mid_Q : Q \to K$ we have $\varphi(x) = \widetilde{\varphi} (q_1) = \lambda$ and $\varphi(q_i) = \widetilde{\varphi}(x^{{i}}) = \lambda^{{i}}$.
      
      Suppose that $[\varphi_1], [\varphi_2] , \ldots, [\varphi_n] \in \Delta_A(Q)$ be such that $Im (\sum_{1 \leq i \leq n} \varphi_i) \in k[[t]]$.  Set 
      $$\widetilde{\varphi}_i(x) = \lambda_i \in \cup_{j \geq 1} t^{-j} k[[t]].$$
      Then
      \begin{equation} \label{FPm}
      \lambda_1^{{j}} + \ldots + \lambda_n^{{j}} \in k[[t]] \hbox{ for } 1 \leq j \leq n.\end{equation}
      Consider a height function
      $$
      h : K = \cup_{j \geq 1} t^{-j} k[[t]] \to \mathbb{Z}
      $$
      sending $f \in a t^{-j} + t^{-j+1}k[[t]]$ for some $a \in k \setminus \{ 0 \}$ to $-j$. 
      Let
      $$j_0 = min \{ height(\lambda_i) \}_{1 \leq i \leq n}
      $$
      and
      $$
      \lambda_i  \in  b_i t^{j_0} + t^{j_0 + 1} k[[t]]
      $$
      for some $b_i \in k$.    Suppose $j_0 < 0$. Then (\ref{FPm}) implies 
      $$
      \sum_{1 \leq i \leq n} b_i^{{j}} = 0 \hbox{ for all } 1 \leq j \leq n.
      $$ Then {\it if $char(k) = 0$ or $char(k) = p > n$} we deduce that $b_1 = \ldots = b_n = 0$, a contradiction.
      Hence $j_0 \geq 0$, so $[\varphi_i] = [0]$ for all $ 1 \leq i \leq n$.
By Theorem \ref{fp}  $L$ is of type $FP_m$ if $char(k) = 0$ or $char(k) = p > n$.
\end{proof}
\subsection{On the self-similarity of the above example : proof of Theorem D}

\begin{theorem} Let $L$ be the metabelian Lie algebra from Section \ref{FPmsection} without restriction on $char(k)$. Then $L$ is  a  contracting, faithful self-similar metabelian  Lie algebra that is not regularly weakly branched. If $char(k) = p > 0$ then $L$ is a transitive self-similar Lie algebra.
\end{theorem}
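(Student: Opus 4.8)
The plan is to realize the self-similar structure through the virtual endomorphism announced after Theorem D. Put $A_0 = x k[x]$, $H = A_0 \leftthreetimes Q$ and $L_0 = k\cdot 1$, the span of the identity of $A = k[x]$ viewed inside $L$; since $A = k\cdot 1 \oplus A_0$ this gives $L = H \leftthreetimes L_0$ with $H$ an ideal and $\dim L_0 = 1$. Define $\theta : H \to L$ by $\theta|_Q = id_Q$ and $\theta(a) = a/x$ for $a \in A_0$. One checks $\theta$ is a Lie homomorphism: brackets internal to $A_0$ and to $Q$ vanish on both sides, and $\theta([q_i,x^j]) = \theta(x^{i+j}) = x^{i+j-1} = [q_i, x^{j-1}] = [\theta q_i, \theta x^j]$. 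Since the inner bracket $[1,h]$ lies in the abelian algebra $A_0$, we have $1^2\circ h = 0$ for every $h \in H$, so the hypothesis of Theorem B holds with $n = 1$, $y_1 = 1$ and $m = 2$ (or $m = p$ in characteristic $p$, where again $1^p\circ h = 0$). Theorem B then yields a Lie homomorphism $\psi : L \to L \wr {\mathcal Der} X$, with $X = k[x]$ if $char(k) = 0$ and $X = k[x]/(x^p)$ if $char(k) = p$, such that $\psi(1) = \partial/\partial x$ and, since $1^j\circ h = 0$ for $j \ge 2$,
$$\psi(h) = 1\otimes\theta(h) + x\otimes\theta([1,h]) \qquad (h \in H);$$
in particular $\psi(x^j) = 1\otimes x^{j-1}$ for $j \ge 1$ and $\psi(q_i) = 1\otimes q_i - x\otimes x^{i-1}$. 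This $\psi$ satisfies the hypotheses of Theorem A with $n = 1$, so parts (b)--(e) of Theorem A apply.

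Self-similarity and faithfulness I would settle as follows. The map $\psi$ is proper since $\psi(1) = \partial/\partial x \notin X\otimes L$ while $\psi(q_1) \notin {\mathcal Der} X$; $L$ is metabelian because $[L,L] \subseteq A$ and $A$ is abelian; and $\theta$ is surjective (as $\theta(A_0) = A$ and $\theta|_Q = id$), so $L$ is recurrent. Moreover $\theta$ is injective ($\theta$ is the identity on $Q$ and the rule $a\mapsto a/x$ on $A_0$, and $\theta(A_0)\subseteq A$ meets $\theta(Q) = Q$ only in $0$), so by Theorem A(b) the map $\psi$ is a monomorphism. For faithfulness, by Theorem A(c) it suffices to show $H$ contains no nonzero $\theta$-invariant ideal $J$ of $L$. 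If $0 \ne f \in J\cap A_0$, then because $J \subseteq H$ each $\theta$-iterate of $f$ must again lie in $A_0$, i.e. have zero constant term, so repeatedly applying $\theta$ lowers the lowest degree occurring in $f$ without bound while staying inside $J\cap A_0$ --- impossible; hence $J\cap A_0 = 0$. Then for any $h = a + \sum_i c_i q_i \in J$ we have $[1,h] = -\sum_i c_i x^i \in J\cap A_0 = 0$, forcing all $c_i = 0$ and $h = a \in J\cap A_0 = 0$. Thus $J = 0$, and $L$ is a faithful $\psi$-self-similar Lie algebra.

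For the structural properties, from $[q_1, x^j] = x^{j+1}$ the set $\{1, q_1, \dots, q_n\}$ generates $L$, and the formulas above put each generator inside a finite-dimensional subspace closed under passing to states (for $q_i$, the span of $q_i, x^{i-1}, \dots, x, 1$), giving a generating set of finite-state elements. For contraction take the finite-dimensional subspace $S = Q \oplus \mathrm{span}\{1, x, \dots, x^{n-1}\}$; one computes $S(\{1\}) = 0$, $S(\{g\}) \subseteq \mathrm{span}\{1, \dots, x^{d-1}\}$ for a polynomial $g$ of degree $d \ge 1$, and $S(\{q\}) \subseteq kq + \mathrm{span}\{1, \dots, x^{n-1}\}$ for $q \in Q$. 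Hence for any $a = f(x) + q \in L$, iterating $S$ keeps the $Q$-component inside $Q$ and reduces the degree of the polynomial part by one at each step until it falls to $\le n-1$ and stays there, so $S^m(\{a\}) \subseteq S$ for all large $m$; thus $L$ is contracting. Finally, when $char(k) = p > 0$ the alphabet $X = k[x]/(x^p)$ has a single variable and $\theta(H) = L$, so Theorem A(e) shows $L$ is transitive.

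It remains to see $L$ is not regularly weakly branched. Suppose $K$ is a nonzero ideal of $L$ with $X\otimes K \subseteq \psi(K)$, and choose $0 \ne k_0 \in K$; then $x\otimes k_0 = \psi(k')$ for some $k' \in K$, and $k' \in H$ since $\psi(k')$ has no derivation component. Comparing the $1\otimes(-)$ coefficients in $\psi(k') = 1\otimes\theta(k') + x\otimes\theta([1,k'])$ and in $x\otimes k_0$ gives $\theta(k') = 0$; as $\theta$ is injective, $k' = 0$, whence $k_0 = \theta([1,0]) = 0$, a contradiction. So no such $K$ exists. I expect the two delicate points to be the absence of a nonzero $\theta$-invariant ideal inside $H$ (the degree-lowering argument, followed by the bracket with $1$) and the verification that the single finite-dimensional subspace $S$ above works for contraction; the remaining assertions reduce directly to Theorems A and B together with the explicit formula for $\psi$.
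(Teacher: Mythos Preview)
Your proof is correct and follows essentially the same route as the paper: build $\theta$, invoke Theorem~B to obtain $\psi$, then read off recurrence, faithfulness, finite-state generators, contraction, and transitivity from Theorem~A together with the explicit formula $\psi(h) = 1\otimes\theta(h) + x\otimes\theta([1,h])$. Your observation that $\theta$ is \emph{injective} is a genuine simplification the paper does not exploit: it gives $\psi$ mono via Theorem~A(b) for free, and more strikingly it dispatches the ``not regularly weakly branched'' claim in two lines (take $x\otimes k_0 = \psi(k')$, compare coefficients to get $\theta(k') = 0$, hence $k' = 0$ and $k_0 = 0$). The paper instead tests with $(1+x)\otimes k_0$, first deduces $K \subseteq A$, and then argues $\psi(A)\cap(x\otimes L) = 0$; your route is cleaner. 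The faithfulness arguments are equivalent --- your degree-lowering is the paper's $J\cap A \subseteq \bigcap_i x^i A = 0$ unpacked --- and your contraction subspace $S = Q \oplus \mathrm{span}\{1,\dots,x^{n-1}\}$ is exactly the one the paper uses.
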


\begin{proof}
We consider the case when $ char(k) = p > 0$. The case when $char(k) = 0$ is the same, in this case we cannot define the notion of transitive self-similar Lie algebra.

\medskip
1. {\it The self-similarity of $L$}

We consider the example above with $char(k) =p > 0 $  and write as generators $q_1, \ldots, q_n \in Q$ and $a \in A$ ( here $a = 1$). Now we construct virtual endomorphism
$$
\theta : H =  A_0 \leftthreetimes Q \to L = A \leftthreetimes Q
$$
 that is the identity on $Q$ and by definition $A_0 = x k[x]$ and
 $$
 \theta(a_0 ) = a_0 / x.
 $$
 Then $H$ is an ideal of $L$ such that $dim(L/H) = 1$. 
 Furthermore $$L = H  \leftthreetimes L_0, \hbox{ where } L_0 = k a.$$ We identify $a$ with the derivation $\partial / \partial x$ i.e. $\psi(a) = \partial/ \partial x$. Thus we can apply Theorem B for $n = 1$ provided
 $$
 \theta(a^p \circ h) = 0
 $$
 for every $h \in H$. But for $h \in H$ we have $a^2 \circ h = a \circ (a \circ h) = [a, [a,h]] \subseteq [A,A] = 0$. Then by Theorem A we have for $h \in H$
\begin{equation} \label{formula} 
\psi(h) = 1 \otimes \theta(h) + x \otimes \theta(a \circ h),
\end{equation}
where $a \circ h = [a,h]$.

\medskip
2. {\it Faithfulness}

It is easy to see that $L$ is a faithful $\psi$-self-similar Lie algebra using Theorem A c).
Indeed suppose that $J$ is an ideal of $L$ such that $\theta(J) \subseteq J$ and $J \subseteq H$. Then $J_0 : = J \cap A \subseteq H \cap A = A_0$ and $\theta(J_0) \subseteq \theta(J) \cap \theta(A_0) \subseteq J \cap A = J_0.
$ Thus $J_0 \subseteq \cap_{i \geq 1} A x^i = 0$, so $J \cap A = 0$. Then $[J, A] \subseteq J \cap A = 0$, so $[J + A , A] \subseteq [J, A] + [A,A] = 0$.
 Then for the canonical map $\pi : L \to L/ A = Q$ we have that $\pi(J) \subseteq C_{Q} (A) = \{ q \in Q \mid [q, A]= 0 \} = 0$.  Since $A \cap J = 0$ we deduce that $J \simeq \pi(J) = 0$.  Since each $\theta$-invariant ideal in $L$ that is contained in $H$ is zero we deduce by Theorem A c) that $L$ is a faithful $\psi$-self-similar Lie algebra.
 
 \medskip
 3. {\it Recurrency}
 
 By construction $\theta$ is surjective and by definition this means that $L$ is recurrent self-similar Lie algebra.

 \medskip
 4. {\it Finite state generators}
 
    Consider the generating set $\{q_1, \ldots, q_n , a \}$ of $L$. Since $\psi(a)  = \partial / \partial x \in {\mathcal Der} X$ we have that  $a$ is a finite state element of $L$.    
    By (\ref{formula})
    $$
    \psi(q_i) = 1 \otimes q_i + x \otimes ([a, q_i]/ x) = 1 \otimes q_i - x \otimes a_{i-1}  
    $$
    where $a_{j} = x^j \in A$.
    If $i \geq 2$ then
    $$
    \psi ( a_{i-1}) = 1 \otimes \theta (a_{i-1}) = 1 \otimes a_{i-2} 
    $$
    and for $i = 1$, $a_0 = a$
    $$
    \psi(a) = \partial / \partial x \in {\mathcal Der}X.
    $$
    Then for $S_i$ the vector space over $k$ with basis $\{ q_i, a_{j} \}_{0 \leq j \leq i-1}$ we have
    $$
    \psi(S_i) \subseteq (X \otimes S_i ) \leftthreetimes {\mathcal Der} X.
    $$
    Since $q_i \in S_i$ we deduce that $q_i$ is a finite state element of $L$.

\medskip
5. {\it Transitivity}

It follows directly from Theorem A e).
  
  \medskip
  6. {\it Contractibility}
  
 Let  $S$ be the vector subspace of $L$ generated by $Q$ and $\{ a_j = x^j \mid 0 \leq j \leq n-1 \} \subset A$.
By item 4. above we have that for every $m \geq 1$  and $q \in Q$
$$S^m(q) \subseteq S.
$$

On other hand for $b \in x^i k[x] \setminus x^{i-1} k[x]$ for some $i \geq 1$  we have
$$
\psi(b) = 1 \otimes (b/x) \in X \otimes L \hbox{ and } \psi(a) \in {\mathcal Der} X
$$
hence
$$
S^{i+1}(b) = S(a) = 0.
$$
 Then for every $l \in L$ there is $m_0 = m_0 (l)$ such that for every $m \geq m_0$ we have $S^m(l) \subseteq S$.
 
 \medskip
7. {\it Not regularly weakly branched}

By \cite{Bartholdi} a recurrent transitive self-similar Lie algebra $L$ is regularly weakly branched if there is a non-zero ideal $K$ of $L$ such that $\psi(K)$ contains $X \otimes K$.
Assume that such ideal $K$ exists. 

Let $k_0 \in K$. Then $(1 + x) \otimes k_0 \in X \otimes K \subseteq \psi(K)$, so there is $k_1 \in K$ such that
$$
(1 + x) \otimes k_0 = \psi(k_1).
$$
Since $\psi(k_1) \in X \otimes L$ we have that $k_1 \in H$ and by (\ref{formula})
$$
1 \otimes k_0  + x \oplus k_0 = (1 + x) \otimes k_0  = \psi(k_1) = 1 \otimes \theta (k_1) + x \otimes \theta([a,k_1]). 
$$
 Thus
 $$ \theta(k_1) = k_0 = \theta([a, k_1]) \in \theta(A \cap H) \subseteq A$$ 
 and so $k_0 \in A$. Since $k_0$ is an arbitrary element of $K$ we deduce that
 $$K \subseteq A.$$
 By (\ref{formula}) we have
 $$\psi(A) = \psi (A \cap H) + \psi(k a)  = 1 \otimes \theta(A \cap H) + k \partial / \partial x =    1 \otimes A + k \partial / \partial x,$$ hence
 $$\psi(A) \cap (x \otimes L) = 0.$$
 Then
 $$x \otimes K = (x \otimes K) \cap \psi(K) \subseteq (x \otimes L) \cap \psi(A) = 0,
 $$
 so $K = 0$, a contradiction.
   Thus $L$ is not regularly weakly branched as a $\psi$-self-similar Lie algebra.
 \end{proof}


\begin{thebibliography}{99}
	
	\bibitem{Bartholdi} R. Bartholdi, Self-similar Lie algebras, J. Europ. Math. Soc. 17 (2015), 3113 - 3151 
	
	
	
	\bibitem{Ber-Sidki} A. Berlatto, S. Sidki, Virtual endomorphisms of nilpotent groups. Groups Geom. Dyn. 1 (2007), no. 1, 21 - 46
	
	
	\bibitem{BB} S. Berman,   Y. Billig, Irreducible representations for toroidal
	Lie algebras, J. Algebra 221 (1999), 188-231
	
	\bibitem{B} Y. Billig, A category of modules for the full toroidal Lie algebra, International Mathematics
	Research Notices, 2006, 68395
	
	\bibitem{BF1} Y. Billig, V. Futorny, Classification of irreducible representations of Lie algebra of vector fields on a torus, J. Reine Angew. Math., 2016 (2016), 199-216
	
	\bibitem{BF2}  Y. Billig, V. Futorny, Representations of Lie algebra of vector fields on a torus and chiral de Rham complex, Trans. Amer. Math. Soc., 366 (2014), 4697-4731
	
	
	\bibitem{BFN}  Y. Billig, V. Futorny, J. Nilsson, Representations of Lie algebra of vector filds on affine varieties,	arXiv:1709.08863
	
	\bibitem{BN} Y. Billig, J. Nilsson; Representations of the Lie algebra of vector fields on a sphere, arXiv:1705.06685 [math.RT]
	
	
	
	\bibitem{Br1} M M. Bremner, Four-point affine Lie algebras, Proc. Amer. Math. Soc. 123:7 (1995),
	1981-1989
	
	\bibitem{Br2} M. Bremner. Generalized affine Kac-Moody Lie algebras over localizations of the polynomial ring in
	one variable. Canad. Math. Bull. 37 (1994), 21-28
	
	
	\bibitem{Br-Gr1}
	R.M Bryant, J.R.J Groves,
	Finitely presented Lie algebras,
	J. Algebra, 218 (1999), 1 - 25
	
	\bibitem{Br-Gr2} R. M. Bryant, J. R. J. Groves, Finite presentation of abelian-by-finite-dimensional Lie algebras, J. London Math. Soc. (2), 60 (1999), 45 - 57
	
	\bibitem{BCF} A. Bueno, B. Cox, V. Futorny, Free field realizations of the elliptic Lie algebra $sl(2, R)\oplus (dw_R/dR)$, J. Geometry and Physics, 59(9) (2009), 01258-1270
	
	\bibitem{C} B. Cox, Realizations of the four-point affine Lie algebra $sl(2, R)\oplus (w_R/dR)$, Pacific J. Math., 234 (2008), 260-288
	
	\bibitem{CJ} B. Cox, E. Jurisich, Realizations of the fhree point  algebra $sl(2, R)\oplus (w_R/dR)$, Pacific J. Math., 270 (2013), 27-48
	
	
	\bibitem{D-J} J. R. J. Groves, D. Kochloukova, Homological finiteness properties of Lie algebras, J. Algebra, 279 (2), 2004, 840 - 849
	
	\bibitem{Alex-Said} A. Dantas, S. Sidki, On state-closed representations of restricted wreath product of groups of type $G_{p,d}=C_{p} \wr C^{d}$,   arXiv:1505.05165, to appear in J. Algebra
	
	\bibitem{Alex-Said2} A. Dantas,  S. Sidki, On self-similarity of wreath products of abelian groups ,  arXiv:1610.08994, to appear in Groups, Geometry and Dynamics
	
	
	
	\bibitem{Frank} M. S. Frank, A new class of simple Lie algebras. Proc. Nat. Acad. Sci. U. S. A. 40, (1954). 713 - 719
	
	\bibitem{GHL}  Y. Gao, N. Hu,  D. Liu,  Representations of the affine-Virasoro algebra of type $A_1$, Journal of Geometry and Physics
	106 (2016), 102-107
	
	
	
	\bibitem{GLZ} X. Guo; R. Lv; K. Zhao, Simple Harish-Chandra modules, intermediate series modules,
	and Verma modules over the loop-Virasoro algebra, Forum Math. 23 (2011), 1029-1052
	
	\bibitem{HX} N. Hu, L. Xia, Irreducible representations for Virasoro-toroidal Lie algebras. J. Pure Appl.
	Algebra 194 (1-2) (2004), 213-237
	
	\bibitem{JY} C. Jiang; H. You, Irreducible representations for the affine-Virasoro Lie algebra of type
	B, Chinese Ann. Math. Ser. B 25 (3) (2004), 359-368
	
	
	\bibitem{Kac} V. G. Kac, Highest weight representations of conformal current algebras, Symposium on
	Topological and Geometric Methods in Field Theory, Espoo, Finland, World Scientific, p.
	3-16, 1986
	
	
	
	\bibitem{Desi} D.H Kochloukova,
	On the homological finiteness properties of some modules over metabelian Lie algebras,
	Israel J. Math., 129 (2002), 221 - 239
	
	
	\bibitem{DesiSaid} D.H. Kochloukova, S. Sidki, Self-similar groups of type  $FP_m$, preprint
	
	
	\bibitem{Ku} G. Kuroki, Fock space representations of affine Lie algebras and integral representations
	in the Wess-Zumino-Witten models, Comm. Math. Phys. 142 (3) (1991), 511-542
	
	\bibitem{KN1} I. M. Krichever, S. P. Novikov., Algebras of Virasoro type, Riemann surfaces and strings in
	Minkowski space. Funktsional. Anal. i Prilozhen., 21(4) (1987), 47-61
	
	\bibitem{KN2} I. M. Krichever, S. P. Novikov. Algebras of Virasoro type, Riemann surfaces and the structures
	of soliton theory. Funktsional. Anal. i Prilozhen., 21(2) (1987), 46-63
	
	
	\bibitem{LQ} X. Liu; M. Qian, Bosonic Fock representations of the affine-Virasoro algebra, J. Phys. A,
	27 (5) (1994), 131-136
	
	
	\bibitem{M} O. Mathieu; Classification of irreducible weight modules. Ann. Inst. Fourier
	(Grenoble) 50 (2000), no. 2, 537-592.
	
	\bibitem{M1} O. Mathieu, Classification of Harish-Chandra modules over the Virasoro algebra, Invent.
	Math., 107 (1992), 225-234
	
	\bibitem{MRY} R. V. Moody; S. Eswara Rao; T. Yokonuma, Toroidal Lie algebras and vertex representations,
	Geom. Ded. 35 (1990), 287-307
	
	
	\bibitem{MR} R. V. Moody, S. Eswara Rao, Vertex represenations for n-affine-Virasoro Lie algebras and
	a generalization of the Virasoro algebra, Comm. Math. Phys. 159 (1994), 239-264
	
	\bibitem{Nekra} V. Nekrashevych, Virtual endomorphisms of groups, Algebra
	and Discrete Mathematics 1 (2002), no. 1, 96-136
	
	
	
	\bibitem{N-S} V. Nekrashevych, S. Sidki, Automorphisms of the binary tree: state-closed subgroups and dynamics of 1/2-endomorphisms. Groups: topological, combinatorial and arithmetic aspects, 375 - 404, London Math. Soc. Lecture Note Ser., 311, Cambridge Univ. Press, Cambridge, 2004
	
	\bibitem{Petr}  V. M. Petrogradsky,
	Examples of self-iterating Lie algebras , J. Algebra 302
	(2006), no. 2, 881 - 886
	
	\bibitem{P-R-S} V. M. Petrogradsky, 
	Yu. P. Razmyslov, E. O. Shishkin, Wreath products and Kaluzhnin-Krasner embedding for Lie algebras, Proc. of Amer. Math. Soc., 135 (2), 2007, 625 - 636 
	
	\bibitem{Petr-Shest}  V. M. Petrogradsky, I. P. Shestakov,
	Examples  of self-iterating Lie algebras, 2, J. Lie Theory
	19 (2009), no. 4, 697 - 724
	
	\bibitem{R1} S. E. Rao, Classification of irreducible integrable modules for multi
	loop algebras with finite dimensional weight spaces, J. Algebra 246
	(2001), 215-225
	
	\bibitem{R2} S. E. Rao, Classification of irreducible integrable modules for toroidal
	Lie algebras with finite dimensional weight spaces, J. Algebra 227
	(2004), 318-348
	
	\bibitem{S} T. Siebert, Lie algebras of derivations and affine algebraic geometry over fields of characteristic 0, Math. Ann., 305 (1996), 271-286
	
	
	\bibitem{Shest-Zel}  I. P. Shestakov, E. Zelmanov,
	Some  examples  of  nil  Lie  algebras, J. Eur. Math. Soc. (JEMS)
	10 (2008), no. 2, 391 - 398
	
	
	
	\bibitem{Shm}  A. L. Shmelkin,  Wreath products of Lie algebras and their applications in group theory,
	Tr. Mosk. Mat. Obs., 1973, (29), 247 - 260                                    
	
	
	
	\bibitem{Sul} F. E. Sullivan, Wreath products of lie algebras, Journal of Pure Appl. Algebra, 35, 1985, 95 - 104
	
	
	
	
	\bibitem{Stradebook} 
	H. Strade, R.  Farnsteiner, 
	Modular Lie Algebras and Their Representations.
	Dekker New York and Basel, 1988 
	
	
	\bibitem{Wasserman} A. Wasserman, A derivation HNN construction for lie algebras, Israel Journal of Mathematics, 1998, 106 (1), 79 - 92
	
	\bibitem{Wilson} R.  L.  Wilson  , Simple  Lie  Algebras  over  Fields  
	of Prime  Characteristic,
	Proceedings  of  the  International  Congress  of  Mathematicians  
	Berkeley,  California,  USA,
	1986  
	
	
	
	
\end{thebibliography}
\end{document}